\newtheorem{Theorem}{Theorem}[section]
\newtheorem{Lemma}[Theorem]{Lemma}
\newtheorem{Proposition}[Theorem]{Proposition}
\theoremstyle{definition}
\newtheorem{Definition}[Theorem]{Definition}
\theoremstyle{remark}
\newtheorem{Remark}[Theorem]{Remark}
\newtheorem*{Example}{Example}
\numberwithin{equation}{section}
\newcommand{\N}{{\mathbb{N}}}  
\newcommand{\Z}{{\mathbb{Z}}}  
\newcommand{\R}{{\mathbb{R}}}  
\newcommand{\Czi}{{C^{\infty}_{0}}}  
\newcommand{\Sw}{\mathcal{S}}	
\newcommand{\db}{\normalfont{\text{\dbar}}}
\newcommand{\rmd}{\mathrm{d}}
\newcommand{\I}{\mathrm{i}}
\newcommand{\jbl}{\langle}
\newcommand{\jbr}{\rangle}
\newcommand{\vp}{\varphi}
\newcommand{\ve}{\varepsilon}
\newcommand{\Osii}{\mathrm{Os-}\iint\limits_{\R^{2n}}}
\newcommand{\Sy}{\mathcal{S}}
\newcommand{\OPS}{\Psi}
\newcommand{\OP}{Op}
\newcommand{\jxi}{\jbl\xi\jbr}
\newcommand{\B}{B}
\newcommand{\W}{\mathcal W}
\newcommand{\Zhyp}{Z_{\text{hyp}}}
\newcommand{\Zpd}{Z_{\text{pd}}}
\newcommand{\T}{\mathrm{T}}
\renewcommand{\S}{\mathrm{S}}
\newcommand{\chixi}{\chi\Big(\frac{\jxi}{N (w(\Lambda(t)))^m}\Big)}
\renewcommand{\Re}{\operatorname{Re}}
\DeclareMathOperator*{\supp}{supp}
\newcommand*\xbar[1]{%
	\hbox{%
		\vbox{%
			\hrule height 0.5pt 
			\kern0.5ex
			\hbox{%
				\kern-0.1em
				\ensuremath{#1}%
				\kern-0.1em
			}%
		}%
	}%
}
\begin{document}

	
	\title[A generalized Levi condition for weakly hyperbolic Cauchy problems]{A generalized Levi condition for weakly hyperbolic Cauchy problems with coefficients low regular in time and smooth in space}
	\author[Lorenz]{Daniel Lorenz}
	\address{%
		TU Bergakademie Freiberg, Faculty of Mathematics and Computer Science\\Institute of Applied Analysis\\Pr{\"u}ferstraße 9,\\09599 Freiberg, Germany
	}
	\email{daniel.lorenz@math.tu-freiberg.de}
	\author[Reissig]{Michael Reissig}
	\address{%
		TU Bergakademie Freiberg, Faculty of Mathematics and Computer Science\\Institute of Applied Analysis\\Pr{\"u}ferstraße 9,\\09599 Freiberg, Germany
	}
	\email{reissig@math.tu-freiberg.de}

	\subjclass{35S05, 35L30, 47G30}
	\keywords{weakly hyperbolic, Cauchy problem, Levi condition, modulus of continuity, weight sequence, weight function}

	\begin{abstract}
		We consider the Cauchy problem for weakly hyperbolic $m$-th order partial differential equations with coefficients low-regular in time and smooth in space.
		It is well-known that in general one has to impose Levi conditions to get $C^\infty$ or Gevrey well-posedness even if the coefficients are smooth. We use moduli of continuity to describe the regularity of the coefficients with respect to time, weight sequences for the characterization of their regularity with respect to space and weight functions to define the solution spaces. Furthermore, we propose a generalized Levi condition that models the influence of multiple characteristics more freely. We establish sufficient conditions for the well-posedness of the Cauchy problem, that link the Levi condition as well as the modulus of continuity and the weight sequence of the coefficients to the weight function of the solution space. Additionally, we obtain that the influences of the Levi condition and the low regularity of coefficients on the weight function of the solution space are independent of each other.
	\end{abstract}
	
	\maketitle
	
	\section{Introduction}\label{INTRO}
	
	In this paper, we consider the weakly hyperbolic Cauchy problem
	\begin{equation}\label{INTRO:CP}
	\begin{aligned}
	D_t^m u = \sum\limits_{j=0}^{m-1} \sum\limits_{|\gamma|+j = m} \lambda(t)^{m-j}a_{m-j,\,\gamma}(t,\,x) D_x^\gamma D_t^j u\\
	 + \sum\limits_{|\gamma|+j < m} b_{m-j,\,\gamma}(t,\,x) D_x^\gamma D_t^j u,\\
	D_t^{k-1}u(0,\,x) = g_k(x),\,k = 1,\,\ldots,\,m,\,(t,\,x) \in [0,\,T]\times\R^n,
	\end{aligned}
	\end{equation}
	and we investigate how the interplay of low regularity of the coefficients with respect to time and weak hyperbolicity of the problem (due to a degeneration at $t = 0$) influences the well-posedness and possible solution spaces of the above problem.
	
	Historically, the effects of low-regular coefficients and multiple characteristics have been studied quite extensively, when just one of the two effects is present.
	
	Let us first recall some results for strictly hyperbolic equations with low-regular coefficients and then turn our attention to results for weakly hyperbolic equations.
	
	For strictly hyperbolic Cauchy problems, it is well-known that there is (in general) no $C^\infty$ or $H^\infty$ well-posedness, if the regularity of the coefficients in time is below Lipschitz.
	Usually, working with low-regular coefficients requires higher regularity in space for the solution and initial data to obtain well-posedness.
	
	For second-order equations with $t$-dependent coefficients, in \cite{Colombini.1979} the authors proved well-posedness for H{\"o}lder continuous coefficients, with Gevrey regularity in space for the solution and initial data.
	
	\cite{Nishitani.1983} and \cite{Jannelli.1985} were able to extend the results of \cite{Colombini.1979} by working with $(t,\,x)$-dependent coefficients and assuming H{\"o}lder regularity in time and Gevrey in the spatial variables of the coefficients.
		
	Considering equations of order $m$ with Log-Lipschitz and H{\"o}lder continuous coefficients, which also depend on $x$, \cite{Cicognani.1999} extended the results of \cite{Jannelli.1985, Nishitani.1983}.
	
	More recently, in \cite{Cicognani.2017} the authors established a general condition linking the regularity of the coefficients in time to possible solution spaces and the required regularity of the coefficients in space. They assumed that the coefficients satisfy the relation
	\begin{equation}\label{INTRO:coeff}
	\begin{aligned}[m]
	\big|D_x^\beta a_{m-j,\,\gamma}(t,\,x) - D_x^\beta a_{m-j,\,\gamma}(s,\,x)\big| \leq C K_{|\beta|} \mu(|t-s|),\\ 0 \leq |t-s| \leq 1, \,x\in\R^n,
	\end{aligned}
	\end{equation}
	where $\mu$ is a modulus of continuity describing their regularity in time and $\{K_p\}_p$ is a weight sequence describing their regularity in space. Under suitable assumptions on the weight sequence $\{K_p\}_p$ and a weight function $\eta$, they proved well-posedness in spaces
	\begin{equation*}
	H^\nu_{\eta,\,\delta}(\R^n) = \Big\{f \in \Sw^\prime(\R^n)\,|\, \jbl D_x\jbr^\nu e^{\delta\eta(\jbl D_x \jbr)} f(x) \in L^2(\R^n)\Big\},\,\delta>0,
	\end{equation*}
	provided that $\mu(\jxi^{-1}) \jxi = o(\eta(\jxi))$.
	
	Our approach makes use of the results of \cite{Cicognani.2017} for strictly hyperbolic problems. In this paper, we work with the spaces $H^\nu_{\eta,\,\delta} = H^\nu_{\eta,\,\delta}(\R^n)$ and we also assume that the coefficients of the principal part are $\mu$-continuous and satisfy \eqref{INTRO:coeff}.
	
	Let us, for completeness, recall what we understand by the term modulus of continuity.
	\begin{Definition}[Modulus of Continuity and $\mu$-Continuity]\label{SH:BM:RegTime:Definition:MOC}
		We call $\mu: [0,\,1] \rightarrow [0,\,1]$ a modulus of continuity, if $\mu$ is continuous, concave and increasing and satisfies $\mu(0) = 0$.
		A function $f \in C(\R^n)$ belongs to $C^\mu(\R^n)$ if and only if
		\begin{equation*}
		|f(x) - f(y)| \leq C \mu(|x-y|),
		\end{equation*}
		for all $x,\,y \in \R^n,\, |x-y| \leq 1$ and some constant $C$.
		
		Typical examples of moduli of continuity are presented in the Table~\ref{table1}.
	\end{Definition}
		\begin{table}[h!]
		\begin{tabulary}{\textwidth}{LL}
			\toprule
			modulus of continuity &commonly called\\
			\midrule
			$\mu(s) = s$	& Lipschitz-continuity\\[5pt]
			$\mu(s) = s \left(\log\left(\frac{1}{s}\right) + 1 \right)$	& Log-Lip-continuity\\[4pt]
			$\mu(s) = s \left(\log\left(\frac{1}{s}\right) + 1\right)\log^{[m]}\left(\frac{1}{s}\right)$ 	& Log-Log$^{[m]}$-Lip-continuity\\[5pt]
			$\mu(s) = s^\alpha,\,\alpha\in(0,\,1)$	& H{\"o}lder-continuity \\[5pt]
			$\mu(s) = \left(\log\left(\frac{1}{s}\right) + 1 \right)^{-\alpha},\,\alpha\in(0,\,\infty)$	& Log$^{-\alpha}$-continuity\\[5pt]
			\bottomrule
		\end{tabulary}
	\caption{Some examples of moduli of continuity and how they are commonly referred to.}
	\label{table1}
	\end{table}

	Following \cite{Cicognani.2017}, we also use weight sequences $\{K_p\}_p$ to describe the regularity of the coefficients in space.
	\begin{Definition}
		Let $\{K_p\}_p$ be a positive, increasing sequence of real numbers. We define the space $\B_K^\infty = \B_K^\infty(\R^n)$ by
		\begin{equation*}
		\B_K^\infty(\R^n) = \Big\{ f \in C^\infty(\R^n)\,|\, \sup_{x\in\R^n} |D_x^\beta f(x)| \leq C K_{|\beta|}\, \text{for all } \beta \in \N^n \Big\}.
		\end{equation*}
		
		By $\B^\infty=\B^\infty(\R^n)$ we denote the space of all smooth functions that have bounded derivatives.
	\end{Definition}
	
	Let us now turn to weakly hyperbolic equations, where it is well-known that in general there is no $C^\infty$ well-posedness but one has to work in Gevrey spaces $G^s$, where the order $s$ depends on the order of the equation or the maximal multiplicity of the characteristic roots. For an equation of order $m$ one has to assume that $1 \leq s < \frac{m}{m-1}$; or if $\kappa$ is the maximal multiplicity of the characteristic roots, one has to assume $1 \leq s < \frac{\kappa}{\kappa-1}$, to obtain well-posedness in $G^s$ (see e.g. \cite{Steinberg.1975, Bronshtein.1976, Kajitani.1983}).
	
	One way to increase the upper bound on $s$ is to pose Levi conditions, that link the coefficients of the lower order terms to the coefficients of the principal part.
	For example, in \cite{Ivrii.1976} the author considered the Cauchy problem for the operator
		\begin{equation*}
		L = \partial_t^2 - t^{2l} \partial_x^2 - t^k \partial_x,
		\end{equation*}
		and proved well-posedness in $C^\infty(\R)$ for $k \geq l-1$, and in $G^s(\R)$ for $k < l-1$ if $1 \leq s < \frac{2l-k}{l-1-k}$.
		
	A more general approach to Levi conditions is the use of shape functions, which describe the speed at which characteristics coincide.
	We introduce shape functions following \cite{Ishida.2002, Yagdjian.1996, Yagdjian.1997}.
		\begin{Definition}[Shape functions]\label{SF:DEF}
		Let $\lambda \in C^\infty([0,\,T])$ be such that $\lambda(0) = \lambda^\prime(0) = 0$ and $\lambda(t), \,\lambda^\prime(t) > 0$, whenever $t \neq 0$. For $\lambda(t)$ we define $\Lambda(t) = \int_0^t \lambda(r) \rmd r$ and assume that
		\begin{align}
		\lambda^m\Lambda^{1-m} \in C^\infty([0,\,T])&,\\
		c_0  \frac{\lambda(t)}{\Lambda(t)} \leq  \frac{\lambda^\prime(t)}{\lambda(t)} \leq c   \frac{\lambda(t)}{\Lambda(t)}&, \text{ for all } t \in (0,\,T],\,c_0 > \frac{s(m-1)}{(s-1)m},\\
		|\lambda^{(k)}(t)| \leq c  \bigg( \frac{\lambda^\prime(t)}{\lambda(t)}\bigg)^{k-1} |\lambda^\prime(t)|&, \text{ for all } t \in (0,\,T], \, k =1,\,2,\,\ldots,
		\end{align}
		where $m$ is the order of the weakly hyperbolic Cauchy problem of interest and $s \geq \frac{m}{m-1}$ is fixed.
		
		Typical examples of shape functions are
		\begin{equation*}
		\lambda(t) = t^l,\,l > m-1,\qquad \lambda(t) = \exp(-|t|^{-r}),\,r > 0.
		\end{equation*}
	\end{Definition}
	We use shape functions to describe the behavior of the coefficients of the principal part and thus the behavior of the characteristic roots. The Levi condition is then formulated in terms of the shape function.
	For example, in \cite{Ishida.2002} the authors considered an operator
	\begin{equation*}
		L = \partial_t^2 - \underbrace{\sum\limits_{j,\,k = 1}^n a_{j,\,k}(t) \partial^2_{x_j x_k}}_{=a_2(t,\,\partial_x)}- \underbrace{\sum\limits_{j=1}^n a_j(t) \partial_{x_j}}_{=a_1(t,\,\partial_x)},	
	\end{equation*}
	with coefficients in $C^1([0,\,T])$. They assumed that
	\begin{enumerate}[label = (\roman*)]
		\item $a_2(t,\,\xi) \sim |\xi|^2 \lambda(t)^2$, $(t,\,\xi) \in [0,\,T]\times\R^n$,
		\item $|\partial_t a_2(t,\,\xi)| \lesssim \lambda(t)^3 \Lambda(t)^{-\frac{s}{s-1}} |\xi|^2$, $(t,\,\xi) \in (0,\,T]\times\R^n$,
		\item $|\partial_t^k a_j(t)| \lesssim \lambda(t)^{2+k} \Lambda(t)^{-\frac{s}{s-1}\,(1+k)}$ for $k = 0,1$ and $t\in(0,\,T]$,
	\end{enumerate}
	and were able to prove well-posedness of the Cauchy problem in the weighted spaces
	\begin{equation*}
	L^2_{s,\,\rho} = \Big\{u \in L^2(\R^n)\,|\,\exp(\rho \jbl D_x\jbr^{\frac{1}{s}}) u(x) \in L^2(\R^n)\Big\}.
	\end{equation*}
	
	Shape functions may also be used when treating weakly hyperbolic Cauchy problems with oscillations (see e.g. \cite{Reissig.1999}) or even to consider problems where the characteristics coincide with different speeds (see e.g. \cite{Kajitani.2002}).
	
	In our approach, we use shape functions and propose a generalized Levi condition. Instead of a typical Gevrey Levi condition (see e.g. \cite{Yagdjian.1996, Yagdjian.1997}), where we would assume that the coefficients of the lower order terms satisfy
	\begin{equation*}
	|b_{m-j,\,\gamma}(t,\,x)| \lesssim \lambda(t)^{m-j} \bigg(\frac{1}{\Lambda(t)^{\frac{s}{s-1}}}\bigg)^{m-j-|\gamma|},
	\end{equation*}
	we assume that
	\begin{equation}\label{INTRO:Levi}
	| b_{m-j,\,\gamma}(t,\,x)| \lesssim\lambda(t)^{m-j} (w(\Lambda(t)))^{m(m-j-|\gamma|)},
	\end{equation}
	where
	\begin{equation*}
	(w(\Lambda(t)))^m =  \Lambda(t)^{-\frac{s}{s-1}} (\log^{[\widetilde m]}(\Lambda(t)^{-1}))^{\widetilde \beta},
	\end{equation*}
	with $s \geq \frac{m}{m-1}$, $\widetilde m \in \N$ and $\widetilde\beta \in \R$.
	We note that this special choice of $w(\Lambda(t))$ enables us to work in scales of Gevrey-type Levi conditions.
	
	In this paper, we prove a sufficient well-posedness result for a class of weakly hyperbolic Cauchy problems, where the coefficients of the principal part are $\mu$-continuous in time and belong to $B^\infty_K$ in space and the coefficients of the lower order terms satisfy the generalized Levi condition \eqref{INTRO:Levi}. Our result states that the effects of the degeneracy and of the low regularity are independent of each other and that possible solution spaces are dictated by the dominant influence only.
	
	The paper is organized as follows: Section~\ref{RESULTS} states the main results of this paper. Examples and remarks are discussed in Section~\ref{EXAMPLES}. Section~\ref{DEF} reviews some definitions and provides an introduction to the pseudodifferential calculus used in this paper. Finally, in Section~\ref{PROOF} we proceed to prove the theorem of Section~\ref{RESULTS}.
	
	\section{Statement of the results}\label{RESULTS}

	For the formulation of the theorem, it is helpful to introduce the following notation.
	We introduce the functions $w(\Lambda(t))$ and $W(\Lambda(t))$ by
	\begin{equation}\label{CP2:DefW}
	\begin{aligned}[m]
	(w(\Lambda(t)))^m &=  \Lambda(t)^{-\frac{s}{s-1}} \big(\log^{[\widetilde m]}(\Lambda(t)^{-1})\big)^{\widetilde \beta},\\
	\partial_t W(\Lambda(t)) &= \lambda(t) w(\Lambda(t)),
	\end{aligned}
	\end{equation}
	for $s \geq \frac{m}{m-1}$, $\widetilde m \in \N$ and $\widetilde \beta \in \R$.
	Furthermore, by $t_\xi = t(\xi,\,N)$ we denote the positive solution to
	\begin{equation}\label{CP2:DefZones}
	N 	\big(w(\Lambda(t_\xi))\big)^m = \jxi,\,\quad N > 0.
	\end{equation}

	Let us consider the Cauchy problem
	\begin{equation}\label{CP2:CP}
	\begin{aligned}
	D_t^m u = \sum\limits_{j=0}^{m-1} \sum\limits_{|\gamma|+j = m} \lambda(t)^{m-j}a_{m-j,\,\gamma}(t,\,x) D_x^\gamma D_t^j u\\
	+ \sum\limits_{|\gamma|+j < m} b_{m-j,\,\gamma}(t,\,x) D_x^\gamma D_t^j u,\\
	D_t^{k-1}u(0,\,x) = g_k(x),\,k = 1,\,\ldots,\,m,\,(t,\,x) \in [0,\,T]\times\R^n,
	\end{aligned}
	\end{equation}
	under the following conditions:
	\begin{enumerate}[label = (A\arabic*),align = left, leftmargin=*]
		\item \label{CP2:ShapeFunc} The function $\lambda(t)$ is a shape function (see Definition~\ref{SF:DEF}).
		
		\item\label{CP2:Hyperbolic} For $\lambda(t) \equiv 1$, the Cauchy problem \eqref{CP2:CP} would be strictly hyperbolic.
		
		\item \label{CP2:CoeffPrinc} The coefficients of the principal part $a_{m-j,\,\gamma}=a_{m-j,\,\gamma}(t,\,x)$ belong to $C^\mu\big([0,\,T];\,B^\infty_K\big)$ and satisfy
		\begin{equation*}
		\big|D_x^\beta a_{m-j,\,\gamma}(t,\,x) - D_x^\beta a_{m-j,\,\gamma}(s,\,x)\big| \leq C K_{|\beta|} \mu(|t-s|),
		\end{equation*}
		for $t,\,s\in [0,\,T],\,0 \leq |t-s| \leq 1, \,x\in\R^n$.
		
		\item \label{CP2:LeviCond} The coefficients of the lower order terms $b_{m-j,\,\gamma}=b_{m-j,\,\gamma}(t,\,x)$ belong to $C\big([0,\,T];\,B^\infty_K\big)$ and
		\begin{equation*}
		\big|D_x^\beta b_{m-j,\,\gamma}(t,\,x)\big| \leq C K_{|\beta|} \lambda(t)^{m-j} \big(w(\Lambda(t))\big)^{m(m-j-|\gamma|)},
		\end{equation*}
		for all $(t,\,x) \in (0,\,T]\times\R^n$.
				
		\item\label{CP2:Data} The initial data $g_k \in H^{\nu+m-k}_{\eta,\,\delta_1}$, where $\nu \in \R$, $\delta_1 > 0$.
		
		\item \label{CP2:MOC} The modulus of continuity $\mu = \mu(s)$ can be written in the form
		\begin{equation*}
		\mu(s) = s\vp(s^{-1}),
		\end{equation*}
		where $\vp = \vp(s)$ is a non-decreasing, smooth function on $[c,\,+\infty),\,c > 0$.
		
		\item \label{CP2:EtaAndK} The weight function $\eta = \eta(\jxi)$ and the sequence of constants $\{K_p\}_p$ satisfy the relation
		\begin{equation*}
		\inf\limits_{p\in\N}\frac{K_p}{\jxi^{p}} \leq C e^{-\delta_0 \eta(\jbl\xi\jbr)},
		\end{equation*}
		for large $|\xi|$ and some $\delta_0 > 0$.
		
		\item \label{CP2:EstEta} The functions
		\begin{equation*}\eta = \eta(\jxi)  \text{ and } M(\jxi) = W(\Lambda(t_\xi))(w(\Lambda(t_\xi)))^{m-1} + \vp(\jxi),
		\end{equation*}
		are smooth and satisfy
		\begin{equation}\label{SH:MR:THEOREM:WEAK:ASSUME:Eta:diff}
		\bigg|\frac{\rmd^k}{\rmd s^k} M (s) \bigg| \leq C_k s^{-k} M(s),
		\end{equation}
		for all $k \in \N$ and large $s \in \R^+$ and
		\begin{equation}\label{CP2:EstEta:subadd}
		\eta(\jbl \xi + \zeta \jbr) \leq \eta(\jxi) + \eta(\jbl \zeta \jbr),\qquad
		M(\jbl \xi + \zeta \jbr)\leq M(\jxi) + M(\jbl \zeta \jbr),
		\end{equation}
		for all large $|\xi|,\,|\zeta|,\,\xi,\,\zeta \in \R^n$.
		
		\item \label{CP2:WAtZero} We have
		\begin{equation*}
			\lim\limits_{t\rightarrow 0+} \lambda(t)^m \big(w(\Lambda(t))\big)^{m(m-1)} = 0.
		\end{equation*}
				
		\item \label{CP2:GlobalResult}
		The weight function $\eta = \eta(\jxi)$ satisfies
		\begin{equation}\label{CP2:DefEta}
		W(\Lambda(t_\xi))\big(w(\Lambda(t_\xi))\big)^{m-1} + \vp(\jxi) = o(\eta(\jxi)).
		\end{equation}
		
		\item \label{CP2:LocalResult}
		The weight function $\eta = \eta(\jxi)$ satisfies
		\begin{equation}\label{CP2:DefEtaO}
		W(\Lambda(t_\xi))\big(w(\Lambda(t_\xi))\big)^{m-1} + \vp(\jxi) = O(\eta(\jxi)).
		\end{equation}
		
	\end{enumerate}	
	
	\begin{Theorem}\label{CP2:Theorem}
		Consider the \hyperref[CP2:CP]{Cauchy problem \eqref{CP2:CP}}. Assume the conditions \ref{CP2:ShapeFunc}-\ref{CP2:WAtZero} and \ref{CP2:GlobalResult}, then we have global (in time) well-posedness, i.e. there is a global (in time) solution
		\begin{equation*}
		u \in \bigcap\limits_{j = 0}^{m-1} C^{m-1-j}\big([0,T];\, H^{\nu+j}_{\eta,\,\delta}\big),
		\end{equation*}
		where $\delta < \min\{\delta_0,\,\delta_1\}$.
		
		If we assume \ref{CP2:LocalResult} instead of \ref{CP2:GlobalResult}, we only have local (in time) well-posedness, i.e. there is a local (in time) solution
		\begin{equation*}
		u \in \bigcap\limits_{j = 0}^{m-1} C^{m-1-j}\big([0,T^\ast];\, H^{\nu+j}_{\eta,\,\delta}\big),
		\end{equation*}
		with $0<T^\ast \leq T$.
	\end{Theorem}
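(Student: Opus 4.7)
The plan is to follow the classical scheme for weakly hyperbolic problems, adapted to handle simultaneously the generalized Levi condition \ref{CP2:LeviCond} and the low time-regularity \ref{CP2:CoeffPrinc}, and to carry out all estimates in spaces indexed by the weight function $\eta$. First I would rewrite \eqref{CP2:CP} as a first-order $m\times m$ pseudodifferential system $D_t U = \A(t,x,D_x) U + \B(t,x,D_x) U + F$, with $\A$ carrying the principal part and $\B$ the Levi terms, and reduce the theorem to an energy estimate for $U$ in the Sobolev-type scale $H^{\nu}_{\eta,\delta}$. The phase space $[0,T]\times\R^n_\xi$ would then be split along the curve $t=t_\xi$ of \eqref{CP2:DefZones} into the pseudodifferential zone $\Zpd=\{t\leq t_\xi\}$ and the hyperbolic zone $\Zhyp=\{t\geq t_\xi\}$, chosen so that on the common boundary we have $N(w(\Lambda(t)))^m\sim\jxi$; assumption \ref{CP2:EstEta} guarantees that the associated cut-off symbols belong to the exotic calculus used in Section~\ref{DEF}.

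In $\Zpd$ I would run a crude energy estimate: by the Levi condition the lower order term $b_{m-j,\gamma}D_x^\gamma D_t^j u$ is controlled by $\lambda(t)^{m-j}(w(\Lambda(t)))^{m(m-j-|\gamma|)}\jxi^{|\gamma|}$, which in the zone is bounded by $\lambda(t)w(\Lambda(t))\jxi^{(m-1)/m}\cdot\jxi^{j}$, so that integration from $0$ to $t_\xi$ produces the factor $\exp\bigl(C\int_0^{t_\xi}\lambda(r)w(\Lambda(r))\rmd r\cdot(w(\Lambda(t_\xi)))^{m-1}\bigr)=\exp\bigl(C\, W(\Lambda(t_\xi))(w(\Lambda(t_\xi)))^{m-1}\bigr)$, accounting for the first summand of $M(\jxi)$. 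In $\Zhyp$ I would use strict hyperbolicity \ref{CP2:Hyperbolic} to perform a perfect diagonalization of the principal symbol up to arbitrary order; the remainders generated by this diagonalization and by the Levi terms are absorbed via a second conjugation by $\exp(\kappa(t,\jxi))$ with a symbol $\kappa$ chosen so that $\partial_t\kappa$ compensates the worst off-diagonal and subprincipal contributions, producing exactly the factor $\exp(C\, W(\Lambda(t_\xi))(w(\Lambda(t_\xi)))^{m-1})$ again at $t=T$. The low regularity in time is handled, following \cite{Cicognani.2017}, by a further conjugation adapted to $\mu$, which contributes the loss $\exp(C\vp(\jxi))$; assumption \ref{CP2:MOC} and the smoothness requirement in \ref{CP2:EstEta} are precisely what is needed for $\vp$ to generate symbols in the calculus.

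After a sharp Gårding inequality in the hyperbolic zone, matching the two estimates at $t=t_\xi$ and using the subadditivity \eqref{CP2:EstEta:subadd} of $M$ and $\eta$ to handle the composition of the conjugating operators with data and solution, I would obtain the total loss $\exp\bigl(C\,M(\jxi)\bigr)=\exp\bigl(C[W(\Lambda(t_\xi))(w(\Lambda(t_\xi)))^{m-1}+\vp(\jxi)]\bigr)$. The assumption \ref{CP2:WAtZero} ensures that no regularization is needed at $t=0$ for the reduction, while assumption \ref{CP2:EtaAndK} converts the spatial regularity bound coming from the $K_p$-norms of the coefficients into the weight $e^{-\delta_0\eta(\jxi)}$. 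Under \ref{CP2:GlobalResult}, the loss $M(\jxi)$ is of lower order than $\eta(\jxi)$, so by choosing $\delta<\min\{\delta_0,\delta_1\}$ one absorbs all losses into $e^{\delta\eta(\jxi)}$ uniformly in $t\in[0,T]$, giving global well-posedness; under the weaker \ref{CP2:LocalResult}, the same absorption only succeeds as long as $T^*C<\delta_1-\delta$ for a suitable constant, yielding a local-in-time existence interval $[0,T^*]$.

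The main obstacle will be the hyperbolic-zone analysis: one has to arrange the two conjugations, one for the Levi condition tied to $W(\Lambda(t_\xi))(w(\Lambda(t_\xi)))^{m-1}$ and one for the time-regularity tied to $\vp(\jxi)$, so that the associated symbols belong to the same exotic pseudodifferential calculus adapted to the metric generated by $M$, and so that their contributions to the final loss add rather than multiply. This is exactly where \eqref{SH:MR:THEOREM:WEAK:ASSUME:Eta:diff} and the subadditivity \eqref{CP2:EstEta:subadd} enter decisively, and it is the point at which the claimed independence of the two effects has to be verified by careful bookkeeping of the symbolic orders in each step of the diagonalization.
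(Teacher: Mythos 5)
Your sketch captures the right overall ingredients (zones delimited by $t=t_\xi$, Levi condition controls the pseudodifferential zone, diagonalization and conjugation in the hyperbolic zone, sharp G\aa rding, absorption of the loss into $e^{\delta\eta}$), but it organizes the proof in a genuinely different way than the paper and leaves a few points underspecified.

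The paper does \emph{not} derive two separate energy estimates with a matching at $t=t_\xi$, nor does it perform a perfect diagonalization to arbitrary order. Instead it builds a single global symbol $h(t,\xi)=\varrho(t,\xi)\chi+\jxi\lambda(t)(1-\chi)$ interpolating the two zonal behaviors via the cut-off $\chi$, reduces to a $2\times2$-type system with $H$, does \emph{one} diagonalization step with $M$ built from the hybrid symbols $\psi_k=d_k\varrho\chi+\tau_k(1-\chi)$, and then applies a \emph{single} scalar conjugation by $e^{\Phi(t_0,D_x)}$ whose exponent is a sum of eight terms, each targeting one type of remainder (Levi-generated terms, regularization error, $\partial_t\log h$, etc.). The worry you raise about ``two conjugations'' whose losses might multiply rather than add is then a non-issue: the paper's $\Phi$ is a Fourier multiplier, so the exponents are additive by construction, and the whole remainder is made positive so that sharp G\aa rding plus Gronwall closes the estimate in one stroke. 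Your zonal-matching route is the classical Yagdjian/Reissig style and in principle can be made to work, but it requires extra bookkeeping to compare energies on $\{t=t_\xi\}$, and the perfect diagonalization you invoke is more machinery than is needed (the paper stops after one step and absorbs all off-diagonal terms through $\Phi$ and G\aa rding).

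Two points you should make explicit. First, the low time-regularity is handled by \emph{mollifying} the principal-part coefficients $a_{m-j,\gamma}$ in $t$ with parameter $\ve=\jxi^{-1}$ \emph{before} the first-order reduction; the conjugation alone cannot do this, because you need $\partial_t\tau_k$ (and hence $\partial_t$ of the coefficients) to control $D_t M$, and those derivatives only exist after mollification. The regularization error $a_{m-j,\gamma}-a_{\ve,m-j,\gamma}$ then produces a term of size $\vp(\jxi)$, which is exactly what the $\vp$-part of $\Phi$ absorbs. Second, the global statement under \ref{CP2:GlobalResult} does not follow merely by ``choosing $\delta<\min\{\delta_0,\delta_1\}$ uniformly on $[0,T]$''; the conjugation contains the term $M_8(T-\kappa t_0)\jxi^{1/s}$ and a priori only yields well-posedness on a short interval $[0,T^\ast]$. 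Global-in-time well-posedness is obtained by the continuation argument of \cite{Cicognani.2017}, exploiting that under the little-$o$ condition \eqref{CP2:DefEta} the data remain in a strictly larger space at each step, so that the iteration can be repeated up to $T$ (possibly with infinite loss of derivatives). Your sketch should be corrected on these two points; otherwise it is a reasonable alternate blueprint.
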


	\section{Examples and remarks}\label{EXAMPLES}
	
	Let us begin with some remarks about the previous theorem.
	\begin{Remark}
		In the definition of $w(\Lambda(t))$ in \eqref{CP2:DefW} we have the requirement $s \geq \frac{m}{m-1}$. This bound is due to Steinberg's result~\cite{Steinberg.1975}, that we have Gevrey well-posedness in $G^s$ if $ 1\leq s < \frac{m}{m-1}$ if the coefficients are smooth, without a Levi condition.
	\end{Remark}

	\begin{Remark}
		Assumption \eqref{CP2:DefW} limits our choice of admissible Levi conditions to scales of Gevrey type Levi conditions. However, it is also possible to work with more general functions $w(\Lambda(t))$ as long as they satisfy all relations of Proposition~\ref{CP2:Remark:EstLambdaW}.
	\end{Remark}	

	\begin{Remark}
		The division of the extended phase space into two zones, governed by the separating line given in \eqref{CP2:DefZones}, is done in such a way, that the loss of derivatives due to the weak hyperbolicity is the same in each zone.	This generalized definition of the zones, given by assumption \eqref{CP2:DefZones}, is compatible with the well-known definition of the zones for Gevrey type Levi conditions (see e.g. \cite{Ishida.2002, Yagdjian.1996, Yagdjian.1997}).
	\end{Remark}

	\begin{Remark}[\cite{Cicognani.2017}]
	Assumption \ref{CP2:EtaAndK} describes the connection between the weight function $\eta$ of the solution space and the behavior of the coefficients with respect to the spatial variables. In a way, we may interpret this condition as a multiplication condition in the sense that the regularity of the coefficients in $x$ has to be such that the product of coefficients and the solution stays in the solution space. This means that the weight sequence $\{K_p\}_p$ and the weight function $\eta$ have to be compatible in a certain sense. One way to ensure that they are compatible is to choose them such that the function space of all functions $f \in C^\infty(\R^n)$ with
	\begin{equation*}
	\sup\limits_{x\,\in \R^n}\big|D_x^\alpha f(x)\big| \leq C K_{|\alpha|},
	\end{equation*}
	and the function space of all functions $f \in L^2(\R^n)$ with
	\begin{equation*}
	e^{\eta(\langle D_x \rangle)} f \in L^2(\R^n)
	\end{equation*}
	coincide. For results concerning the conditions on $\eta$ and $\{K_p\}_p$ under which both spaces coincide, we refer the reader to \cite{Bonet.2007, Pascu.2010, Reich.2016}.
	\end{Remark}
	
	\begin{Remark}[\cite{Cicognani.2017}]
	Assumption \ref{CP2:EstEta} provides some relations that are used in the pseudodifferential calculus. Condition \eqref{SH:MR:THEOREM:WEAK:ASSUME:Eta:diff} for $\eta$ and $M$ is not really a restriction. First of all, the first summand of $M$ satisfies this relation anyway. If $\eta$ or $\vp$ happen to be not smooth, we can define equivalent weight functions, that are smooth and satisfy \eqref{SH:MR:THEOREM:WEAK:ASSUME:Eta:diff}.
	
	The difficulty of checking whether condition \eqref{CP2:EstEta:subadd}, is satisfied, certainly depends on the choice of $\eta$. For $M$ this condition is easily verified due to our special choice of $w(\Lambda(t))$.  However, in some cases it may be easier to verify that $\eta$  and $M$ belong to a certain class of weights, for which \eqref{CP2:EstEta:subadd} is satisfied. An example of such a class is introduced in Definition~3.7 in \cite{Reich.2016}.
	\end{Remark}
	
	\begin{Remark}
		Assumption \ref{CP2:WAtZero} limits the cases we can treat with this approach. For a Gevrey type Levi condition (i.e. $(w(\Lambda(t)))^m = \Lambda(t)^{-\frac{s}{s-1}}$) and the choice of $\lambda(t) = t^l$, this condition implies that $s > m$, which means that our approach is only applicable if the degeneracy is sufficiently strong. We note that a condition like this is also present in other results (see e.g. \cite{Ishida.2002, Yagdjian.1996, Yagdjian.1997}), although it is hidden in the assumptions on the admissible shape functions.
	\end{Remark}
	
	\begin{Remark}
		The crucial condition which determines the spaces in which we have well-posedness is condition \eqref{CP2:DefEta}. In this condition we see, that each effect produces a weight. The term $W(\Lambda(t_\xi)) (w(\Lambda(t_\xi)))^{m-1}$ is due to the weak hyperbolicity, whereas the term $\vp(\jxi)$ is due to the low regularity of the coefficients. Most importantly, we see that the weights coming from each effect are added up, which means that they are independent and do not influence each other. Furthermore, it is clear that we only feel the effect of the stronger weight. This behavior can be seen more clearly in the following examples.
	\end{Remark}
	
	In the following, we compute some examples. In each example, we first choose a Levi condition and compute the weight related to the particular choice of $w(\Lambda(t))$. Then, we look at possible choices of moduli of continuity $\mu$ to describe the regularity of the coefficients in time. Depending on this modulus of continuity and the Levi condition, we look for a suitable weight function $\eta$ which satisfies \eqref{CP2:DefEta}.
	Having chosen $\eta$, we specify the regularity of the coefficients in space by choosing a sequence of constants $\{K_p\}_p$ such that \ref{CP2:EtaAndK} is satisfied.
	
	Let us begin with some examples of typical Gevrey type Levi conditions.
	
	\begin{Example}
		Let $(w(\Lambda(t)))^m = (\Lambda(t))^{- \frac{s}{s-1}}$ resulting in $W(\Lambda(t)) = \Lambda(t)^{1-\frac{s}{m(s-1)}}$. The definition of the zones yields that $\Lambda(t_\xi) \sim \jxi^{-\frac{s-1}{s}}$, which gives that
		\begin{equation*}
			W(\Lambda(t_\xi))(w(\Lambda(t_\xi)))^{m-1} \sim \jxi^{\frac{1}{s}}.
		\end{equation*}
		As expected, the Gevrey type Levi condition yields a Gevrey type weight.
		
		As for the choice of a modulus of continuity, we can see from Table~\ref{table2} that we may allow the coefficients to be H{\"o}lder continuous of order $\alpha = 1-\frac{1}{s}$ or smoother, without changing the overall weight of our solution space. In these cases, we may also use the well-known inequality
		\begin{equation*}
		\inf\limits_{p \in \N} (p!)^{s^\ast} (A \jxi^{-1})^p \leq C e^{-\delta_0 \jxi^{\frac{1}{s^\ast}}},
		\end{equation*}
		to obtain that a possible weight sequence $\{K_p\}_p$ is the Gevrey weight sequence $K_p = (p!)^{s^\ast} A^{p}$. In that way assumption~\ref{CP2:EtaAndK} is satisfied. Also assumption~\ref{CP2:EstEta} can be easily verified for this choice of $w(\Lambda(t))$ and $\eta$.
				
		In all these cases, we have well-posedness in Gevrey type spaces
		\begin{equation*}
			H^\nu_{\eta,\,\delta}(\R^n) = \Big\{f \in \Sw^\prime(\R^n)\,|\, \jbl D_x\jbr^\nu e^{\delta\eta(\jbl D_x \jbr)} f(x) \in L^2(\R^n)\Big\},
		\end{equation*}
		with $\eta(\jxi) = \jxi^{\frac{1}{s^\ast}}$ and $1 \leq s^\ast < s$, which is a well-known result (see e.g. \cite{Ishida.2002, Yagdjian.1996, Yagdjian.1997}).
		
		However, if we choose $\mu(s) =  \left(\log\left(\frac{1}{s}\right) + 1 \right)^{-\alpha},\quad \alpha\in(0,\,\infty)$, the weight $\vp(\jxi) = \jxi\log(\jxi))^{-\alpha} $ clearly dominates $\jxi^{\frac{1}{s}}$ and we have to choose
		\begin{equation*}
		\eta(\jxi) =  \jxi (\log(\jxi))^{-\kappa},
		\end{equation*}
		where $0 < \kappa < \alpha$.
		In view of Definition~9 and Example~25 in \cite{Bonet.2007}, we find that condition \ref{CP2:EtaAndK} is satisfied if we choose
		\begin{equation*}
		K_p = ((p+1)(\log(e+p)))^p.
		\end{equation*}
		
		Checking that assumption~\eqref{CP2:EstEta:subadd} is satisfied, may not seem straightforward. However, we can easily check that $\eta$ and $\vp$ belong to the set $\W(\R)$ which was introduced in \cite{Reich.2016}. In \cite{Reich.2016} the author proves that all functions in  $\W(\R)$ satisfy an even stronger condition than \eqref{CP2:EstEta:subadd}.
		
		In that way all assumptions are satisfied and we have well-posedness in spaces
		\begin{equation*}
		H^\nu_{\eta,\,\delta}(\R^n) = \Big\{f \in \Sw^\prime(\R^n)\,|\, \jbl D_x\jbr^\nu e^{\delta\eta(\jbl D_x \jbr)} f(x) \in L^2(\R^n)\Big\},
		\end{equation*}
		with $\eta(\jxi) =  \jxi (\log(\jxi))^{-\kappa}$ and $0 < \kappa < \alpha$. We note that these spaces are very close to the space of analytic functions even though we assumed a Gevrey type Levi condition.
	\end{Example}

	\begin{table}[htbp]
	\begin{tabulary}{\textwidth}{LL}
		\toprule
		modulus of continuity &generated weight\\
		\midrule
		$\mu(s) = s$	& $\vp(\jxi) = 1$\\[5pt]
		$\mu(s) = s \left(\log\left(\frac{1}{s}\right) + 1 \right)$	& $\vp(\jxi) = \log(\jxi)$\\[4pt]
		$\mu(s) = s \left(\log\left(\frac{1}{s}\right) + 1\right)\log^{[\widetilde n]}\left(\frac{1}{s}\right)$ 	& $\vp(\jxi) = \log(\jxi) \log^{[\widetilde n]}(\jxi)$ \\[5pt]
		$\mu(s) = s^\alpha,\, \alpha\in(0,\,1)$	& $\vp(\jxi) = \jxi^{1-\alpha}$ \\[5pt]
		$\mu(s) = \left(\log\left(\frac{1}{s}\right) + 1 \right)^{-\alpha},\, \alpha\in(0,\,\infty)$	& $\vp(\jxi) = \jxi\log(\jxi))^{-\alpha} $\\[5pt]
		\bottomrule
	\end{tabulary}
	\caption{Moduli of continuity $\mu$ and the respective, generated weights $\vp$.}
	\label{table2}
	\end{table}

	Next, let us consider a Levi condition which is a little less restrictive than the usual Gevrey type Levi condition.
	
	\begin{Example}
		Let
		\begin{equation*}
		(w(\Lambda(t)))^m = (\Lambda(t))^{- \frac{s}{s-1}} \log(\Lambda(t)^{-1}),
		\end{equation*}
		which gives
		\begin{equation*}
		W(\Lambda(t)) \sim \Lambda(t)^{1-\frac{s}{m(s-1)}}\log(\Lambda(t)^{-1})^{\frac{1}{m}},
		\end{equation*}
		for small $t$ i.e. large $\Lambda(t)^{-1}$.
		The definition of the zones yield that
		\begin{equation*}
		\Lambda(t_\xi) \sim \jxi^{-\frac{s-1}{s}} (\log(\jxi^{\frac{s-1}{s}}))^{\frac{s-1}{s}},
		\end{equation*}
		for large $|\xi|$, which gives that
		\begin{equation}\label{Example:Weight1}
		W(\Lambda(t_\xi))(w(\Lambda(t_\xi)))^{m-1} \sim \frac{s-1}{s}\bigg(\frac{\jxi}{\frac{s-1}{s}\log(\jxi)}\bigg)^{\frac{1}{s}}\log\bigg(\frac{\jxi}{\frac{s-1}{s}\log(\jxi)}\bigg).
		\end{equation}
		We see that this Levi condition, that is a little less restrictive than the usual Gevrey type Levi condition leads to a weight, that is very close to a Gevrey weight. However, due to the $\log$-terms we are slightly below the Gevrey weight $\jxi^{\frac{1}{s}}$.

		Again, looking at Table~\ref{table2}, we can see that this weight dominates the weights due to the modulus of continuity until we assume H{\"o}lder continuous coefficients or worse. If the coefficients are worse than H{\"o}lder, clearly their weight is dominant. If the coefficients are H{\"o}lder continuous of order $\alpha$, the situation is more delicate. If $\alpha < 1- \frac{1}{s}$, the weight given by \eqref{Example:Weight1} is dominant; if $\alpha \geq  1- \frac{1}{s}$ the weight $\jxi^{1-\alpha}$ is dominant.
		
		In the latter case (i.e. $\alpha \geq 1-\frac{1}{s}$), it is clear that we are again working in Gevrey spaces and that we can employ the same weight sequence $\{K_p\}_p$ and weight function $\eta$ as in the previous example. In these cases, we have well-posedness in Gevrey type spaces.
		
		If $\alpha < 1- \frac{1}{s}$, the weight given by \eqref{Example:Weight1} dominates and we may choose any weight function $\eta$ that grows faster than this weight. One example would be
		\begin{equation*}
		\eta (\jxi) = \bigg(\frac{\jxi}{\frac{s-1}{s}\log(\jxi)}\bigg)^{\frac{1}{s}}\log\bigg(\frac{\jxi}{\frac{s-1}{s}\log(\jxi)}\bigg) \log\bigg(\log\bigg(\frac{\jxi}{\frac{s-1}{s}\log(\jxi)}\bigg)\bigg).
		\end{equation*}
		However, for this particular choice of $\eta$ it is quite challenging to find an ``optimal'' weight sequence $\{K_p\}_p$. A simple solution to that problem is to just use the Gevrey weight sequence $K_p = (p!)^{s^\ast} A^{p}$ again. With this weight sequence, it is clear that assumption~\ref{CP2:EtaAndK} is satisfied. For more detailed considerations about finding and choosing a weight sequence in this setting, we refer the reader to the example with Log-Log$^{[m]}$-Lip continuous coefficients in \cite{Cicognani.2017}.
		In these cases, we have well-posedness in spaces that are very close but a little bit smaller than the classical Gevrey space $G^{s}$.
		
		Again, if we choose $\mu(s) =  \left(\log\left(\frac{1}{s}\right) + 1 \right)^{-\alpha},\quad \alpha\in(0,\,\infty)$, the weight $\vp(\jxi) = \jxi\log(\jxi))^{-\alpha} $ clearly dominates the weight given by \eqref{Example:Weight1} and we have to choose
		\begin{equation*}
		\eta(\jxi) =  \jxi (\log(\jxi))^{-\kappa},
		\end{equation*}
		where $0 < \kappa < \alpha$.
		Again, choosing
		\begin{equation*}
		K_p = \big((p+1)(\log(e+p))\big)^p,
		\end{equation*}
		ensures that condition \ref{CP2:EtaAndK} is satisfied.
				
		In that way all assumptions are satisfied and we have well-posedness in spaces
		\begin{equation*}
		H^\nu_{\eta,\,\delta}(\R^n) = \Big\{f \in \Sw^\prime(\R^n)\,|\, \jbl D_x\jbr^\nu e^{\delta\eta(\jbl D_x \jbr)} f(x) \in L^2(\R^n)\Big\},
		\end{equation*}
		with $\eta(\jxi) =  \jxi (\log(\jxi))^{-\kappa}$ and $0 < \kappa < \alpha$. We note that these spaces are very close to the space of analytic functions even though we assumed a Gevrey type Levi condition.
	\end{Example}
	
	\begin{Remark}
		Due to the special choice of the function $w(\Lambda(t))$, it is possible to provide a general characterization of the weights $W(\Lambda(t_\xi)) (w(\Lambda(t_\xi)))^{m-1}$. For general
		\begin{equation*}
		(w(\Lambda(t)))^m = \Lambda(t)^{-\frac{s}{s-1}} \big(\log^{[\widetilde m]}(\Lambda(t)^{-1})\big)^{\widetilde \beta},
		\end{equation*}
		with $ \widetilde \beta \neq 0$, we have
		\begin{equation*}
		W(\Lambda(t)) \sim \Lambda(t)^{1-\frac{s}{m(s-1)}} \big(\log^{[\widetilde m]}(\Lambda(t)^{-1})\big)^{\frac{\widetilde \beta}{m}},
		\end{equation*}
		for small $t > 0$. We obtain that
		\begin{equation*}
		W(\Lambda(t_\xi)(w(\Lambda(t_\xi)))^{m-1} \sim \Lambda(t_\xi)^{-\frac{1}{s-1}} \log^{[\widetilde m]}(\Lambda(t_\xi)^{-1})^{\widetilde \beta},
		\end{equation*}
		for small $t_\xi$, i.e. large $|\xi|$.
		The general definition of the zones gives
		\begin{equation*}
			\Lambda(t_\xi) \sim \jxi^{-\frac{s-1}{s}} \big(\log^{[\widetilde m]}(\jxi^{\frac{s-1}{s}})\big)^{\frac{s-1}{\widetilde \beta s}},
		\end{equation*}
		for small $t_\xi$, i.e. large $|\xi|$, which allows us to conclude that
		\begin{equation*}
		\begin{aligned}[t]
		W(\Lambda(t_\xi)(w(\Lambda(t_\xi)))^{m-1} &\sim \Bigg(\frac{\jxi}{(\log^{[\widetilde m]}(\jxi^{\frac{s-1}{s}}))^{\frac{1}{\widetilde \beta}}}\Bigg)^{\frac{1}{s}}\\
		&\times \Bigg(\log^{[\widetilde m]}\Bigg(\frac{\jxi}{(\log^{[\widetilde m]}(\jxi^{\frac{s-1}{s}}))^{\frac{1}{\widetilde \beta }}}\Bigg)^{\frac{s-1}{s}}\Bigg)^{\widetilde \beta},
		\end{aligned}
		\end{equation*}
		for large $|\xi|$. Depending on the sign of $\widetilde \beta$, this means that this weight always suggests working slightly below or slightly above the related Gevrey space $G^s$.
	\end{Remark}

	\section{Definitions and tools}\label{DEF}
	
	We begin by reviewing some notations.
	
	Let $x = (x_1,\,\,\ldots,\,x_n)$ be the variables in the $n$-dimensional Euclidean space $\R^n$ and by $\xi = (\xi_1,\,\ldots,\,\xi_n)$ we denote the dual variables. Furthermore, we set $\jbl\xi\jbr^2 = 1 + |\xi|^2$.
	We use the standard multi-index notation. Precisely, let $\Z$ be the set of all integers and $\Z_+$ the set of all non-negative integers. Then $\Z^n_+$ is the set of all $n$-tuples $\alpha = (\alpha_1,\,\ldots,\,\alpha_n)$ with $a_k \in \Z_+$ for each $k = 1,\,\ldots,\,n$. The length of $\alpha \in \Z^n_+$ is given by $|\alpha| = \alpha_1 + \ldots + \alpha_n$.\\
	Let $u = u(t,\,x)$ be a differentiable function, we then write
	\begin{equation*}
		u_t(t,\,x) = \partial_t u (t,\,x) = \frac{\partial}{\partial t} u(t,\,x),
	\end{equation*}
	and
	\begin{equation*}
		\partial_x^\alpha u (t,\,x) = \left(\frac{\partial}{\partial x_1}\right)^{\alpha_1} \ldots\left(\frac{\partial}{\partial x_n}\right)^{\alpha_n} u(t,\,x).
	\end{equation*}
	Using the notation $D_{x_j} = -\I \frac{\partial}{\partial x_j}$, where $\I$ is the imaginary unit, we write also
	\begin{equation*}
		D_x^\alpha = D_{x_1}^{\alpha_1} \cdots D_{x_n}^{\alpha_n}.
	\end{equation*}
	Similarly, for $x\in \R^n$ we set
	\begin{equation*}
		x^\alpha = x_1^{\alpha_1} \cdots x_n^{\alpha_n}.
	\end{equation*}
	In the context of pseudodifferential operators and the related symbolic calculus, we sometimes use the notation
	\begin{equation*}
		a^{(\alpha)}_{(\beta)}(x,\,\xi) = \partial_\xi^\alpha D_x^\beta a(x,\,\xi).
	\end{equation*}
	
	Let $f$ be a continuous function in an open set $\Omega \subset \R^n$. By $\supp f$ we denote the support of $f$, i.e. the closure in $\Omega$ of $\{x \in \Omega\,|\,f(x) \neq 0\}$. By $C^k(\Omega)$, $0 \leq k \leq \infty$, we denote the set of all functions $f$ defined on $\Omega$, whose derivatives $\partial^\alpha_x f$ exist and are continuous for $|\alpha| \leq k$. By $\Czi(\Omega)$ we denote the set of all functions $f \in C^\infty(\Omega)$ having compact support in $\Omega$. The Sobolev space $H^{k,p}(\Omega)$ consists of all functions that are $k$ times differentiable in Sobolev sense and have (all) derivatives in $L^p(\Omega)$.
	
	For two functions $f=f(x)$ and $g=g(x)$ we write
	\begin{align*}
		f(x) = o(g(x))\qquad \text{ if }\qquad \lim\limits_{|x|\rightarrow \infty} \frac{f(x)}{g(x)} = 0,
	\end{align*}
	and we use the notation
	\begin{equation*}
		f(x) = O(g(x))\qquad \text{ if }\qquad \limsup\limits_{|x|\rightarrow \infty} \frac{f(x)}{g(x)} \leq C.
	\end{equation*}
	We use $C$ as a generic positive constant which may be different even in the same line.
	
	An import tool in our approach is the division of the extended phase space into zones. Depending on the Levi condition, we divide the phase space into two zones. For this purpose we define $t_\xi$ as a solution to the equation
	\begin{equation*}
	\jxi = N (w(\Lambda(t_\xi)))^m,
	\end{equation*}
	where $N$ is a positive constant. Using $t_\xi$ and the notation $J = [0,\,T]\times \R^n \times \R^n$ we define the so-called pseudodifferential zone
	\begin{equation}\label{Def:Zpd}
	\begin{aligned}
	\Zpd(N,\,M) &= \big\{(t,\,x,\,\xi) \in J\,|\, 0 \leq t \leq t_\xi,\,\jxi > M \big\}\\
	&= \big\{(t,\,x,\,\xi) \in J\,|\, \jxi \leq N (w(\Lambda(t)))^m,\,\jxi > M \big\},
	\end{aligned}
	\end{equation}
	and the so-called hyperbolic zone
	\begin{equation}\label{Def:Zhyp}
	\begin{aligned}
	\Zhyp(N,\,M) &= \big\{(t,\,x,\,\xi) \in J\,|\, t_\xi \leq t \leq T,\,\jxi > M \big\}\\
	&= \big\{(t,\,x,\,\xi) \in J\,|\, \jxi \geq N(w(\Lambda(t)))^m,\,\jxi > M \big\}.
	\end{aligned}
	\end{equation}
	
	Concerning the function $w(\Lambda(t))$, we state some relations that are obtained by straightforward computations thanks to the special structure of $w(\Lambda(t))$.
	
	\begin{Proposition}\label{CP2:Remark:EstLambdaW}
		Take $\lambda(t)$ and $\Lambda(t)$ as defined in \ref{CP2:ShapeFunc} and take $w(\Lambda(t))$ and $W(\Lambda(t))$ as given by \eqref{CP2:DefW}.
		\begin{enumerate}[label = (\roman*)]
			\item We have \begin{equation*}
			(w(\Lambda(t)))^m \geq C \frac{1}{\Lambda(t)},
			\end{equation*}
			if $0 < t$ is sufficiently small.
			
			\item For $t \in (0,\,T]$, we have
			\begin{equation*}
			0 < - \partial_t (w(\Lambda(t))) \leq \frac{\lambda(t)}{\Lambda(t)} \frac{1}{m} w(\Lambda(t)).
			\end{equation*}
			
			\item We have
			\begin{equation*}
			\int\limits_{0}^{t_\xi} \jxi^{\frac{1}{m}} \lambda(t) (w(\Lambda(t)))^{m-1} \rmd t \leq C W(\Lambda(t_\xi)) (w(\Lambda(t_\xi)))^{m-1},
			\end{equation*}
			and
			\begin{equation*}
			\int\limits_{t_\xi}^{T} \lambda(t) (w(\Lambda(t)))^{m} \rmd t \leq C W(\Lambda(t_\xi)) (w(\Lambda(t_\xi)))^{m-1}.
			\end{equation*}
		\end{enumerate}

	\end{Proposition}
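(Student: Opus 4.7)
All three estimates rest on the explicit power-log expression for $w(\Lambda)$ together with the two identities $\Lambda'(t)=\lambda(t)$ and $W'(\Lambda)=w(\Lambda)$ (the latter rephrasing $\partial_tW=\lambda w$ via the chain rule). My plan is to treat each part in turn, converting every $t$-integral into an integral in the variable $u=\Lambda(t)$, since with $du=\lambda\,dt$ the estimates collapse to routine computations with powers and slowly varying factors. For part (i), I write $(w(\Lambda))^m=\Lambda^{-1}\cdot\Lambda^{-1/(s-1)}\bigl(\log^{[\widetilde m]}\Lambda^{-1}\bigr)^{\widetilde\beta}$. Since $s\ge m/(m-1)>1$, the exponent $1/(s-1)$ is strictly positive, so $\Lambda^{-1/(s-1)}\to\infty$ as $t\to0^+$ and dominates any iterated logarithm raised to a fixed real power. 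Hence the bracketed factor exceeds any prescribed constant for sufficiently small $t>0$, which is exactly what part (i) asserts.

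For (ii), I would differentiate $w(\Lambda(t))=\Lambda(t)^{-s/(m(s-1))}\bigl(\log^{[\widetilde m]}\Lambda(t)^{-1}\bigr)^{\widetilde\beta/m}$ by the chain rule together with $\Lambda'=\lambda$. The pure power term produces the leading contribution $-\frac{s}{m(s-1)}\,\frac{\lambda}{\Lambda}\,w$, while differentiating the iterated logarithm brings in an additional factor $\frac{\lambda}{\Lambda}$ divided by a product of iterated logs $\log^{[k]}\Lambda^{-1}$, which grow arbitrarily large as $t\to0^+$. Hence the log corrections are negligible relative to the leading term and share its sign, so $-\partial_t w(\Lambda(t))$ is positive and bounded above by a constant multiple of $\frac{\lambda}{\Lambda}\,w(\Lambda(t))$ for small $t$; the bound extends to all of $(0,T]$ by compactness, and the resulting constant is absorbed as in the statement.

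For (iii), I substitute $u=\Lambda(t)$ in both integrals. The first becomes $\jxi^{1/m}\int_0^{\Lambda(t_\xi)}w(u)^{m-1}\,du$, and the defining relation \eqref{CP2:DefZones} gives $\jxi^{1/m}\sim w(\Lambda(t_\xi))$. Using $w(u)^{m-1}\sim u^{-(m-1)s/(m(s-1))}$, whose exponent lies in $(0,1)$ precisely because $s>m$ (a consequence of \ref{CP2:WAtZero} as observed in the remarks), the $u$-integral evaluates, up to slowly varying log corrections, to $\sim\Lambda(t_\xi)\,w(\Lambda(t_\xi))^{m-1}$, and the identity $\Lambda w\sim W$, which follows on integrating the explicit power expression for $w$ against $d\Lambda$, closes the estimate. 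The second integral becomes $\int_{\Lambda(t_\xi)}^{\Lambda(T)}w(u)^m\,du$ with $w(u)^m\sim u^{-s/(s-1)}$ and $s/(s-1)>1$, so the antiderivative behaves like $u^{-1/(s-1)}$; evaluation at $\Lambda(t_\xi)$ dominates that at $\Lambda(T)$ for large $\jxi$, yielding $\lesssim\Lambda(t_\xi)^{-1/(s-1)}\sim W(\Lambda(t_\xi))\,w(\Lambda(t_\xi))^{m-1}$. The main obstacle I anticipate is the careful bookkeeping of the iterated-logarithm factors when $\widetilde\beta\neq0$: integrals of $u^{-a}(\log^{[\widetilde m]}u^{-1})^{\widetilde\beta}$ do not admit closed forms, but their asymptotics as $u\to0^+$ are governed by the leading power with the log factor carried through as a slowly varying correction. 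I would verify this either by differentiating the proposed asymptotic antiderivative and checking that the error terms are absorbed by the leading contribution, or by invoking standard slow-variation / L'Hopital arguments.
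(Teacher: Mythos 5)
The paper offers no proof of this proposition (it merely says the relations ``are obtained by straightforward computations''), so your proposal is being judged on its own merits. Your overall strategy—change variables $u=\Lambda(t)$ and exploit the explicit power-log form of $w$—is the natural one and carries (i) and (iii) cleanly. For (iii), the chain $\jxi^{1/m}\sim w(\Lambda(t_\xi))$, $\int_0^\Lambda w(u)^{m-1}\,du\sim\Lambda\,w(\Lambda)^{m-1}$, $\Lambda\,w(\Lambda)\sim W(\Lambda)$, and for the second integral $\int_\Lambda^{\Lambda(T)}w(u)^m\,du\sim\Lambda^{-1/(s-1)}\sim W(\Lambda)w(\Lambda)^{m-1}$, all check out once one verifies that the relevant exponents lie in $(0,1)$ resp.\ $(1,\infty)$; you correctly tie the former to $s>m$, which \ref{CP2:WAtZero} enforces.

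The gap is in (ii). You write that $-\partial_t(w(\Lambda(t)))$ is ``bounded above by a constant multiple of $\tfrac{\lambda}{\Lambda}w(\Lambda(t))$ \ldots\ and the resulting constant is absorbed as in the statement,'' but the proposition asserts the \emph{specific} constant $\tfrac1m$, and your argument never produces it. If you actually carry out the chain-rule differentiation you outline, you get
\begin{equation*}
-\partial_t\bigl(w(\Lambda(t))\bigr)
= \frac{\lambda(t)}{\Lambda(t)}\,w(\Lambda(t))
\left(\frac{s}{m(s-1)}
+\frac{\widetilde\beta/m}{\prod_{j=1}^{\widetilde m}\log^{[j]}\bigl(\Lambda(t)^{-1}\bigr)}\right),
\end{equation*}
and the leading coefficient is $\tfrac{s}{m(s-1)}$, which is \emph{strictly larger} than $\tfrac1m$ (because $s/(s-1)>1$); it only tends to $\tfrac1m$ as $s\to\infty$. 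So the inequality with $\tfrac1m$ cannot hold as stated, and your ``constant is absorbed'' remark papers over a genuine mismatch. The correct constant is $\tfrac{s}{m(s-1)}$, and this is in fact consistent with the rest of the paper: Definition~\ref{SF:DEF} requires $c_0>\tfrac{s(m-1)}{(s-1)m}$, and plugging $\tfrac{s}{m(s-1)}$ rather than $\tfrac1m$ into the proof of Lemma~\ref{Lemma:rho}(i) produces exactly the threshold $c_0 m > \tfrac{(m-1)s}{s-1}$, i.e.\ $c_0>\tfrac{s(m-1)}{(s-1)m}$. (The ``since $c_0>\tfrac{m-1}{m}$'' at the end of that proof suffers from the same slip.) So you should carry the differentiation through explicitly, obtain $\tfrac{s}{m(s-1)}$, and flag that the proposition's $\tfrac1m$ is apparently a misprint; a vague ``bounded by a constant multiple'' is not enough when the constant is load-bearing in a later lemma. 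You should also note that the positivity $-\partial_t w>0$ on all of $(0,T]$ requires the iterated logs to remain well-defined and, if $\widetilde\beta<0$, large enough that the bracket stays positive—this holds after possibly shrinking $T$ so that $\Lambda(T)$ is small, but your ``compactness on $(0,T]$'' remark as written is not an argument (the interval is not compact at $0$; what saves you is the $o(1)$ decay of the correction there).
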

	
	\subsection{Symbol classes and pseudodifferential calculus}
		
	We introduce standard symbol classes of pseudodifferential operators following \cite{Hormander.2007}.
	
	\begin{Definition}[$\Sy^m_{\rho,\,\delta}$ and $\OPS^{m}_{\rho,\,\delta}$]\label{APP:PSEUDO:DEF:SmRD}\label{APP:PSEUDO:DEF:Sm}
		Let $m,\,\rho,\,\delta$ be real numbers with $0 \leq \delta < \rho \leq 1$. Then we denote by $\Sy^m_{\rho,\,\delta} = \Sy^m_{\rho,\,\delta}(\R^n\times\R^n)$ the set of all $a\in C^\infty(\R^n \times \R^n)$ such that for  all multi-indexes $\alpha,\,\beta$ the estimate
		\begin{equation*}
		\big|D_x^\beta \partial_\xi^\alpha a(x,\,\xi)\big| \leq C_{\alpha,\,\beta} (1 + |\xi|)^{m - \rho|\alpha| + \delta |\beta|},
		\end{equation*}
		is valid for all $x,\,\xi \in \R^n$ and some constant $C_{\alpha,\,\beta}$.
		We write $\Sy^{-\infty}_{\rho,\,\delta} = \bigcap_m \Sy^m_{\rho,\,\delta}$, $\Sy^\infty_{\rho,\,\delta} = \bigcup_m \Sy^m_{\rho,\,\delta}$.
		For a given $a=a(x,\,\xi) \in \Sy^m_{\rho,\,\delta}$, we denote by $\OP(a) = a(x,\,D_x)$ the associated pseudodifferential operator, which is defined as
		\begin{align*}
		a(x,\,D_x) u(x) = \int\limits_{\R^n} e^{\I x\cdot \xi} a(x,\,\xi)  \hat{u}(\xi) \db \xi
		= \Osii e^{\I (x-y)\cdot \xi} a(x,\,\xi)  u(y) \rmd y \db\xi,
		\end{align*}
		where $\db \xi = (2\pi)^{-n} \rmd \xi$ and $\Osii$ means the oscillatory integral.\\
		By $\OPS^m_{\rho,\,\delta} = \OPS^m_{\rho,\,\delta}(\R^n)$ we denote the set of all pseudodifferential operators that are associated to some symbol in $\Sy^m_{\rho,\,\delta}$.
		Conversely, for $a \in \OPS^{m}_{\rho,\,\delta}$, we denote by $\sigma(a) \in \Sy^{m}_{\rho,\,\delta}$ the associated symbol.
	\end{Definition}
	We define symbol classes related to the zones we defined in \eqref{Def:Zpd} and \eqref{Def:Zhyp}. To describe the behavior of symbols in the pseudodifferential zone we introduce the function $\varrho = \varrho(t,\,\xi)$ to be a positive solution to
	\begin{equation*}
		\varrho^m(t,\,\xi) = 1 + \jxi \lambda^m(t) \big(w(\Lambda(t))\big)^{m(m-1)}.
	\end{equation*}
	
	\begin{Lemma}\label{Lemma:rho}
		The function $\varrho = \varrho(t,\xi)$ satisfies
		\begin{enumerate}[label = (\roman*),align = left, leftmargin=*]
			\item $0 \leq \partial_t \varrho(t,\xi)$,
			\item $ \frac{\partial_t \varrho(t,\xi)}{\varrho(t,\xi)} \leq \lambda(t)^{m} \frac{\lambda(t)}{\Lambda(t)} \big(w(\Lambda(t))\big)^{m(m-1)} \jxi$,
		\end{enumerate}
		for all $(t,x,\xi) \in[0,\,T]\times\R^n\times\R^n$.
	\end{Lemma}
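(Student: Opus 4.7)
The plan is to differentiate the defining identity $\varrho^m = 1 + \jxi\,\lambda(t)^m (w(\Lambda(t)))^{m(m-1)}$ directly in $t$ and analyze both sides term by term. Applying $\partial_t$ and the product rule gives
\[
m\varrho^{m-1} \partial_t \varrho = m\jxi\lambda^{m-1}\lambda'(w(\Lambda))^{m(m-1)} + m(m-1)\jxi\lambda^m (w(\Lambda))^{m(m-1)-1}\partial_t(w(\Lambda)),
\]
so $\partial_t \varrho$ decomposes into one manifestly nonnegative summand (since $\lambda,\lambda'\geq 0$) and one nonpositive summand (since $\partial_t w(\Lambda)\leq 0$ by Proposition~\ref{CP2:Remark:EstLambdaW}(ii)). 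Both parts (i) and (ii) flow from comparing these two summands.

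For (i), I need the positive summand to dominate. Using the lower bound $\lambda'/\lambda \geq c_0\,\lambda/\Lambda$ from Definition~\ref{SF:DEF} on the first summand, and the bound $-\partial_t w(\Lambda) \leq \frac{1}{m}(\lambda/\Lambda)\,w(\Lambda)$ from Proposition~\ref{CP2:Remark:EstLambdaW}(ii) on the second, the resulting inequality reduces to the purely algebraic condition $c_0 \geq (m-1)/m$. This is ensured by the standing hypothesis $c_0 > s(m-1)/((s-1)m)$ in Definition~\ref{SF:DEF}, because $s/(s-1) > 1$ implies $s(m-1)/((s-1)m) > (m-1)/m$.

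For (ii), I rewrite $\partial_t \varrho/\varrho = \partial_t(\varrho^m)/(m\varrho^m)$, drop the nonpositive second summand (it only helps the upper bound), and apply the upper estimate $\lambda' \leq c\,\lambda^2/\Lambda$ from Definition~\ref{SF:DEF}. This yields
\[
\partial_t(\varrho^m) \leq m c\,\jxi\,\lambda^{m+1}\Lambda^{-1}(w(\Lambda))^{m(m-1)} = m c\,\lambda^m \tfrac{\lambda}{\Lambda}(w(\Lambda))^{m(m-1)}\jxi.
\]
Dividing by $m\varrho^m$ and using $\varrho^m \geq 1$ to eliminate the denominator delivers the claim (up to the implicit constant absorbed in the $\leq$ convention used throughout the paper).

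The only real subtlety lies in (i): it is exactly the sharp threshold $c_0 > s(m-1)/((s-1)m)$ built into the definition of a shape function that makes the positive contribution from $\lambda'$ outweigh the negative contribution from $\partial_t w(\Lambda)$. The rest of the argument is a routine application of the product rule together with the auxiliary estimates in Proposition~\ref{CP2:Remark:EstLambdaW}.
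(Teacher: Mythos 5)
Your proposal is correct and follows essentially the same route as the paper: differentiate the defining identity, split $\partial_t\varrho$ into a positive and a negative contribution, and control each summand with $c_0\,\lambda/\Lambda \leq \lambda'/\lambda \leq c\,\lambda/\Lambda$ and Proposition~\ref{CP2:Remark:EstLambdaW}(ii). The two small differences are cosmetic but in your favour: you work with $\partial_t(\varrho^m)$ rather than $\partial_t\varrho$, which is just the chain rule applied the other way around, and you spell out why $c_0 > \tfrac{s(m-1)}{(s-1)m}$ implies $c_0 > \tfrac{m-1}{m}$ (via $s/(s-1)>1$), whereas the paper simply asserts the latter inequality.
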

	\begin{proof}
		For (i) we compute
		\begin{align*}
		\partial_t \varrho(t,\xi) &= \partial_t \Big(1+\jxi \lambda^m(t) (w(\Lambda(t)))^{m(m-1)}\Big)^{\frac{1}{m}}\\
		&= \begin{aligned}[t]
		\frac{1}{m} \varrho^{-(m-1)} \jxi &\Big(m \lambda^{m-1}(t) \lambda^\prime(t) (w(\Lambda(t)))^{m(m-1)}\\
		&+ m(m-1)\lambda^m(t) (w(\Lambda(t)))^{m(m-1)-1} \partial_t(w(\Lambda(t))) \Big).
		\end{aligned}
		\end{align*}
		We use Definition~\ref{SF:DEF} to estimate $\lambda^\prime(t) \geq c_0 \lambda(t) \frac{\lambda(t)}{\Lambda(t)}$ and (ii) from Proposition~\ref{CP2:Remark:EstLambdaW}. We obtain
		\begin{align*}
		\partial_t \varrho(t,\xi) &\geq \begin{aligned}[t]
		\frac{1}{m} \varrho^{-(m-1)} \jxi &\Big(c_0 m \lambda^{m}(t) \frac{\lambda(t)}{\Lambda(t)} (w(\Lambda(t)))^{m(m-1)}\\
		&- (m-1)\lambda^m(t) \frac{\lambda(t)}{\Lambda(t)} (w(\Lambda(t)))^{m(m-1)} \Big) \geq 0,
		\end{aligned}
		\end{align*}
		since $c_0 > \frac{m-1}{m}$. This proves the lower bound on $\varrho$.
		
		For (ii) we again use Definition~\ref{SF:DEF} to estimate $\lambda^\prime(t) \geq c \lambda(t) \frac{\lambda(t)}{\Lambda(t)}$ and (ii) from Proposition~\ref{CP2:Remark:EstLambdaW}. We have
		\begin{equation}\label{Lemma:rho:estDtRho}
		0 \leq \partial_t \varrho(t,\xi) \leq  C \varrho^{-(m-1)} \jxi  \lambda^{m}(t) \frac{\lambda(t)}{\Lambda(t)} \big(w(\Lambda(t))\big)^{m(m-1)}.
		\end{equation}
		From this we conclude
		\begin{align*}
			\frac{\partial_t \varrho(t,\xi)}{\varrho(t,\xi)} \leq C \frac{\jxi  \lambda^{m}(t) \frac{\lambda(t)}{\Lambda(t)} \big(w(\Lambda(t))\big)^{m(m-1)}}{\varrho(t,\,\xi)^m}\\\leq C\jxi  \lambda^{m}(t) \frac{\lambda(t)}{\Lambda(t)} (w(\Lambda(t)))^{m(m-1)},
		\end{align*}
		since $1 \leq \varrho(t,\xi)$.
	\end{proof}

	To describe the behavior of symbols in the pseudodifferential zone, we introduce the symbol class $\T_{N,\,M}(m_1,\,m_2,\,m_3)$.
	\begin{Definition}[$\T_{N,\,M}(m_1,\,m_2,\,m_3)$]
		Let $N,\,M > 0$ and $m_1,\,m_2,\,m_3 \in \R$. A function $a \in C\big([0,\,T];\,C^\infty(\R^n \times \R^n)\big)$ belongs to $\T_{N,\,M}(m_1,\,m_2,\,m_3)$ if
		\begin{equation*}
		\big|D_x^\beta \partial_\xi^\alpha a(t,\,x,\,\xi)\big| \leq C_{\alpha,\,\beta} \varrho(t,\,\xi)^{m_1} \Big(\frac{\partial_t \varrho(t,\,\xi)}{\varrho(t,\,\xi)}\Big)^{m_2} \jxi^{m_3-|\alpha|},
		\end{equation*}
		for all $\alpha,\,\beta \in \N^n$ and all $(t,\,x,\,\xi) \in \Zpd(N,\,M)$.
	\end{Definition}

	To describe the behavior of symbols in the hyperbolic zone, we introduce the symbol class $\S_{N,\,M}(l_1,\,l_2,\,l_3,\,l_4,\,l_5)$.
	\begin{Definition}[$\S_{N,\,M}(l_1,\,l_2,\,l_3,\,l_4,\,l_5)$]
		Let $N,\,M > 0$ and $l_1,\,\ldots,\,l_5 \in \R$. A function $a \in C\big([0,\,T];\,C^\infty(\R^n \times \R^n)\big)$ belongs to $\S_{N,\,M}(l_1,\,l_2,\,l_3,\,l_4,\,l_5)$ if
		\begin{equation*}
		\big|D_x^\beta \partial_\xi^\alpha a(t,\,x,\,\xi)\big| \leq C_{\alpha,\,\beta}\jxi^{l_1-|\alpha|} \lambda(t)^{l_2} \Big(\frac{\lambda(t)}{\Lambda(t)}\Big)^{l_3} (w(\Lambda(t)))^{l_4} (\vp(\jxi))^{l_5},
		\end{equation*}
		for all $\alpha,\,\beta \in \N^n$ and all $(t,\,x,\,\xi) \in \Zhyp(N,\,M)$.
	\end{Definition}

	\begin{Remark}
		Let us explain why we define the above mentioned symbol classes $\T_{N,\,M}(m_1,\,m_2,\,m_3)$ and $\S_{N,\,M}(l_1,\,l_2,\,l_3,\,l_4,\,l_5)$ in the way we do.
		
		In the hyperbolic zone $\Zhyp(N,\,M)$ where our problem can be treated similarly to a strictly hyperbolic problem, the behavior of the characteristic roots $\tau_k = \tau_k(t,\,x,\,\xi)$, $k = 1,\,\ldots,\,m$, of the original equation is important. In $\Zhyp(N,\,M)$ they basically behave like $\jxi \lambda(t)$, thus it is useful to include these terms in the definition of $\S_{N,\,M}(l_1,\,l_2,\,l_3,\,l_4,\,l_5)$. Furthermore, we also use the Levi condition \ref{CP2:LeviCond} in $\Zhyp(N,\,M)$ to deal with the lower-order terms which gives rise to terms behaving like $\lambda(t) (w(\Lambda(t)))^m$. The term $\vp = \vp(\jxi)$ appears due to the regularization we perform in the hyperbolic zone to account for the low regularity of the coefficients. Lastly, we note that terms that behave like $\frac{\lambda(t)}{\Lambda(t)}$ appear since we use $\jxi \lambda(t)$ in the definition of the energy for the hyperbolic zone. Applying a time derivative to these terms in the energy yields $\frac{\lambda(t)}{\Lambda(t)}$ (if we use the relations given in Definition~\ref{SF:DEF}). We note that all of these five terms appear for different reasons and that it is useful to keep track of them separately.
		
		To explain the symbol class $\T_{N,\,M}(m_1,\,m_2,\,m_3)$, we first note that we typically work in classes $\T_{2N,\,M}(m_1,\,m_2,\,m_3)$ which are related to the pseudodifferential zone $\Zpd(2N,\,M)$. This is helpful since in that way there is some overlap between the hyperbolic zone $\Zhyp(N,\,M)$ and the pseudodifferential zone $\Zpd(2N,\,M)$. Our approach for the treatment in the pseudodifferential zone is to include terms that behave like $\varrho = \varrho(t,\xi)$ in the energy for the pseudodifferential zone. By doing so, we obtain terms that behave like $\frac{\partial_t \varrho(t,\xi)}{\varrho(t,\xi)}$ from deriving the energy with respect to time. It is helpful to keep the terms $\varrho$ and $\frac{\partial_t \varrho}{\varrho}$ separate in the definition of the symbol class. One example for this are the computations in the area where both zones overlap. In the set $\Zhyp(N,\,M) \cap \Zpd(2N,\,M)$ we can show that $\varrho(t,\xi) \sim \jxi \lambda(t)$ and  $\frac{\partial_t \varrho(t,\xi)}{\varrho(t,\xi)} \sim\frac{\lambda(t)}{\Lambda(t)}$ which is helpful when calculating products of symbols.
	\end{Remark}

	\begin{Remark}
		We note that we deliberately do not include conditions on $D_t a(t,\,x,\,\xi)$ in the definition of $\S_{N,\,M}(l_1,\,l_2,\,l_3,\,l_4,\,l_5)$. Usually (with coefficients regular in time) one would expect a condition like
		\begin{equation*}
		\begin{aligned}[t]
		\big|D_t^j D_x^\beta \partial_\xi^\alpha a(t,\,x,\,\xi)\big| \leq C_{\alpha,\,\beta,\,j}\jxi^{l_1-|\alpha|} \lambda(t)^{l_2}\Big(\frac{\lambda(t)}{\Lambda(t)}\Big)^{l_3+j} (w(\Lambda(t)))^{m l_4}\\\times (\vp(\jxi))^{l_5},
		\end{aligned}
		\end{equation*}
		for all $\alpha,\,\beta \in \N^n$, $j \leq j_\text{max}$ and all $(t,\,x,\,\xi) \in \Zhyp(N,\,M)$, for some $j_\text{max} \in \N$. However, since the coefficients are low-regular in time, we come across some symbols that would not fit into this classification. For that reason, we omit a condition on $D_t a(t,\,x,\,\xi)$ in the definition and characterize the behavior of the time-derivatives directly, e.g. by
		\begin{equation*}
		D_t^j a \in \S_{N,\,M}(l_1,\,l_2,\,l_3+j,\,l_4,\,l_5) + \S_{N,\,M}(l_1,\,l_2,\,l_3,\,l_4,\,l_5+j),
		\end{equation*}
		when needed.
	\end{Remark}
	\begin{Remark}
		We often employ cut-off functions to restrict symbols to a certain zone. Usually, we use a function $\chi \in C^\infty$ with
		\begin{equation*}
			\chi(\xi) = \begin{cases}
				1,& |\xi| \leq 1,\\
				0,& |\xi| \geq 2,
				\end{cases}
		\end{equation*}
		and $0 \leq \chi(\xi) \leq 1$. To restrict a symbol to the pseudodifferential zone $\Zpd(N,\,M)$ we use $\chi\big(\frac{\jxi}{\frac{N}{2} (w(\Lambda(t)))^m}\big)$. Note, that we have $\frac{N}{2}$ in the denominator to ensure that $\chi \equiv 0$, when we leave the zone. Using $\chi\big(\frac{\jxi}{N (w(\Lambda(t)))^m}\big)$ restricts a symbol to $\Zpd(2N,\,M)$.
		To restrict a symbol to the hyperbolic zone $\Zhyp(N,\,M)$ we use $\big(1-\chi\big(\frac{\jxi}{N (w(\Lambda(t)))^m}\big)\big)$.
	\end{Remark}

	\begin{Remark}
		We note that symbols that are identically zero in $\Zpd(N,\,M)$ belong to $\T_{N,\,M}(0,\,0,\,-\infty)$.
	\end{Remark}

	The following properties and symbol hierarchies are due to the definitions of the zones and the definitions of the symbol classes. They are obtained by straightforward computations.
	\begin{Proposition}[Symbol hierarchies and properties]\label{Prop:SymbolProp}
		Let $N,\,M > 0$, $\alpha,\,\beta \in \N^n$ and $m_1,\,m_2,\,m_3,\,l_1,\,\ldots,\,l_5 \in \R$. We then have
		\begin{enumerate}[label = (\roman*),align = left, leftmargin=*]
			\item If $a \in \T_{N,\,M}(m_1,\,m_2,\,m_3)$, then $D_x^\beta a \in \T_{N,\,M}(m_1,\,m_2,\,m_3)$.
			\item If $a \in \T_{N,\,M}(m_1,\,m_2,\,m_3)$, then $\partial_\xi^\alpha a \in \T_{N,\,M}(m_1,\,m_2,\,m_3-|\alpha|)$.
			\item If $a \in \T_{N,\,M}(m_1,\,m_2,\,m_3)$ and $\widetilde a \in \T_{N,\,M}(\widetilde m_1,\,\widetilde m_2,\,\widetilde m_3)$, then \\
$ \widetilde a a \in \T_{N,\,M}(m_1+\widetilde m_1,\,m_2+ \widetilde m_2,\,m_3+ \widetilde m_3)$.
			\item If $a \equiv 0$ in $\Zpd(N,\,M)$, then $a \in \T_{N,\,M}(0,\,0,\,-\infty)$.
			\item We have $\T_{N,\,M}(m_1,\,m_2,\,m_3 -|\alpha|) \subset \T_{N,\,M}(m_1,\,m_2,\,m_3)$.
			\item If $N_1\leq N,\, M_1 \geq M$, then\\ $\T_{N,\,M}(m_1,\,m_2,\,m_3) \subset \T_{N_1,\,M_1}(m_1,\,m_2,\,m_3)$.			
			\item If $a\in \S_{N,\,M}(l_1,\,l_2,\,l_3,\,l_4,\,l_5)$, then $D_x^\beta a\in \S_{N,\,M}(l_1,\,l_2,\,l_3,\,l_4,\,l_5)$.
			\item If $a\in \S_{N,\,M}(l_1,\,l_2,\,l_3,\,l_4,\,l_5)$, then $\partial_\xi^\alpha a\in \S_{N,\,M}(l_1-|\alpha|,\,l_2,\,l_3,\,l_4,\,l_5)$.
			\item If $a\in \S_{N,\,M}(l_1,\,l_2,\,l_3,\,l_4,\,l_5)$ and $\widetilde a\in \S_{N,\,M}(\widetilde l_1,\,\widetilde l_2,\,\widetilde l_3,\,\widetilde l_4,\,\widetilde l_5)$, then \\ $a\widetilde a\in \S_{N,\,M}(l_1+\widetilde l_1,\,l_2+\widetilde l_2,\,l_3+\widetilde l_3,\,l_4+\widetilde l_4,\,l_5+\widetilde l_5)$.
			\item We have $\S_{N,\,M}(l_1,\,l_2,\,l_3+k,\,l_4,\,l_5) \subset \S_{N,\,M}(l_1,\,l_2+k,\,l_3,\,l_4+k,\,l_5)$, for $k \geq 0$.
			\item We have $\S_{N,\,M}(l_1,\,l_2,\,l_3,\,l_4,\,l_5) \subset \S_{N,\,M}(l_1+k_1+k_2,\,l_2,\,l_3,\,l_4-k_1,\,l_5-k_2)$, for $k_1,\,k_2 \geq 0$.
			\item If $N\leq N_1,\,M_1 \geq M$, then\\ $\S_{N,\,M}(l_1,\,l_2,\,l_3,\,l_4,\,l_5) \subset \S_{N_1,\,M_1}(l_1,\,l_2,\,l_3,\,l_4,\,l_5)$.
		\end{enumerate}
	\end{Proposition}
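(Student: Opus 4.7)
The plan is to verify the twelve items directly from the defining inequalities of $\T_{N,M}$ and $\S_{N,M}$, the geometric relations between the zones $\Zpd$ and $\Zhyp$, and the explicit structure of $w(\Lambda)$ captured in Proposition~\ref{CP2:Remark:EstLambdaW}.

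Items (i)--(iii) and (vii)--(ix) are routine multi-index bookkeeping. Since $\varrho(t,\xi)$ and $\partial_t\varrho/\varrho$ are $x$-independent, applying $D_x^\beta$ to $a$ only shifts the implicit multi-index in the defining estimate and therefore leaves the triple $(m_1,m_2,m_3)$ unchanged, which is (i); the analogous observation proves (vii). Each application of $\partial_{\xi_j}$ increases $|\alpha|$ and thus lowers the $\jxi$-power by exactly one, giving (ii) and (viii). For products, Leibniz produces a finite sum whose terms add exponents slot by slot and multiply constants, giving (iii) and (ix).

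Items (iv), (v), (vi) and (xii) are purely geometric or monotonicity statements. For (iv), if $a$ vanishes identically on $\Zpd(N,M)$, then the required estimate is trivially satisfied with the $\jxi^{-\infty}$ convention. For (v), use $\jxi\ge 1$ so that $\jxi^{m_3-|\alpha|-|\alpha'|}\leq \jxi^{m_3-|\alpha'|}$ for every multi-index $\alpha'$. For (vi) and (xii) the key observation is that $\Zpd(N_1,M_1)\subset\Zpd(N,M)$ whenever $N_1\leq N$ and $M_1\geq M$, while $\Zhyp(N_1,M_1)\subset\Zhyp(N,M)$ whenever $N_1\geq N$ and $M_1\geq M$, so estimates valid on the larger zone restrict automatically to the smaller one.

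The substantive content lies in (x) and (xi). Item (x) reduces, after cancelling the common factors, to $(\lambda/\Lambda)^k\leq C(\lambda w(\Lambda))^k$, hence to the asymptotic bound $w(\Lambda(t))\gtrsim \Lambda(t)^{-1}$ for small $t$. This follows because the assumption $s\ge m/(m-1)$ in \ref{CP2:ShapeFunc} forces $s/(s-1)\ge m$, so the leading factor $\Lambda^{-s/(s-1)}$ in $(w(\Lambda))^m=\Lambda^{-s/(s-1)}(\log^{[\widetilde m]}\Lambda^{-1})^{\widetilde\beta}$ already dominates $\Lambda^{-m}$, while the iterated logarithm correction is negligible at $\Lambda\to 0+$; on any interval bounded away from $t=0$ the bound is trivial up to a multiplicative constant, or one first restricts to a small-$t$ neighborhood using (xii). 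Item (xi) uses the defining inequality of the hyperbolic zone, $(w(\Lambda(t)))^m\leq \jxi/N$, to trade $w^{k_1}$ for $C\jxi^{k_1/m}\leq C\jxi^{k_1}$ (here $k_1\geq 0$ and $\jxi\geq 1$), together with the elementary bound $\vp(\jxi)\leq \jxi$ for $\jxi\geq 1$, which is an immediate consequence of $\mu(s)\leq 1$ on $[0,1]$ and the representation $\mu(s)=s\vp(s^{-1})$ from \ref{CP2:MOC}; this trade replaces $\vp^{k_2}$ by $\jxi^{k_2}$.

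The only delicate point is the comparison in (x), where the constraint $s\ge m/(m-1)$ and the specific form of $w(\Lambda)$ must be reconciled with the $\log^{[\widetilde m]}$-correction and the small-$t$ regime; the remaining items reduce to direct computations, Leibniz applications or zone inclusions, and I anticipate no additional obstacle.
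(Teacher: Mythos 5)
The paper gives no proof of Proposition~\ref{Prop:SymbolProp}, asserting only that the items follow from straightforward computations, so I will assess your argument on its own merits. Items (i)--(ix) and (xii) are handled correctly: the Leibniz bookkeeping, the $\jxi\geq 1$ monotonicity argument for (v), and the zone inclusions $\Zpd(N_1,M_1)\subset\Zpd(N,M)$ for $N_1\leq N$, $M_1\geq M$ and $\Zhyp(N_1,M_1)\subset\Zhyp(N,M)$ for $N_1\geq N$, $M_1\geq M$ are all right. Item (xi) is also sound in substance: the hyperbolic-zone bound $(w(\Lambda(t)))^m\leq\jxi/N$ and the bound $\vp(\jxi)\leq\jxi$ (from $\mu\leq 1$) do the job. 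Note however that, as the paper uses the class elsewhere (e.g.\ in the proof of Proposition~\ref{Prop:StoS} and the remark following the definition), the $l_4$-slot tracks powers of $(w(\Lambda(t)))^m$, not of $w(\Lambda(t))$; decreasing $l_4$ by $k_1$ therefore frees a factor $(w(\Lambda(t)))^{mk_1}\leq C\jxi^{k_1}$, not $w^{k_1}\leq C\jxi^{k_1/m}$ as you wrote, though the conclusion is unchanged.

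The real gap is in item (x). After cancelling common factors one must control $(\lambda/\Lambda)^k$ by $\lambda^k\big(w(\Lambda)\big)^{mk}$, i.e.\ one needs $\big(w(\Lambda(t))\big)^m\gtrsim \Lambda(t)^{-1}$, which is precisely Proposition~\ref{CP2:Remark:EstLambdaW}(i). You instead reduced to $w(\Lambda(t))\gtrsim\Lambda(t)^{-1}$ — an $m$-th power too strong — and then tried to justify this by asserting that $s\geq m/(m-1)$ forces $s/(s-1)\geq m$. This is backwards: since $s\mapsto s/(s-1)$ is decreasing on $(1,\infty)$ and equals $m$ at $s=m/(m-1)$, the hypothesis $s\geq m/(m-1)$ gives $s/(s-1)\leq m$. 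Consequently $w(\Lambda)\gtrsim\Lambda^{-1}$ is in general false (it holds only in the boundary case $s=m/(m-1)$). The two mistakes happen to mask one another, but neither is correct. What is true and what you should invoke is simply that $s>1$ gives $s/(s-1)>1$, hence $\Lambda^{-s/(s-1)+1}\to\infty$ as $\Lambda\to 0+$ fast enough to absorb the polylogarithmic factor, which yields $\big(w(\Lambda(t))\big)^m\geq C\Lambda(t)^{-1}$ for small $t$; uniform boundedness on $[\ve,T]$ then gives the estimate on all of $(0,T]$. With that replacement, (x) is established.
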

	
	The following two propositions give relations to the standard symbol classes $\Sy^m$. We use these relations later on to explain composition, parametrix and adjoint operators.
	
	\begin{Proposition}\label{Prop:StoS}
		Let $ a = a(t,\,x,\,\xi)$ be a symbol with
		\begin{equation*}
		a \in \T_{N,\,M}(0,\,0,\,0) + \S_{N,\,M}(l_1,\,l_2,\,l_3,\,l_4,\,l_5).
		\end{equation*} Then we have
		\begin{equation*}
		 a \in L^\infty\big([0,\,T];\,\Sy^{\max\{0,\,l_1+l_3+l_4+l_5\}}\big).
		\end{equation*}
	\end{Proposition}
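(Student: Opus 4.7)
The plan is to establish the standard Hörmander-class estimate $|D_x^\beta \partial_\xi^\alpha a(t,x,\xi)| \leq C_{\alpha,\beta}\jxi^{m-|\alpha|}$ with $m = \max\{0,\,l_1+l_3+l_4+l_5\}$ and constants uniform in $t\in[0,T]$, by a direct zone-by-zone computation. Writing $a = a_1 + a_2$ with $a_1 \in \T_{N,M}(0,0,0)$ and $a_2 \in \S_{N,M}(l_1,\ldots,l_5)$, I would first dispose of the low-frequency region $\jxi \leq M$ trivially via the continuity in $t$ and smoothness in $(x,\xi)$, and then verify the bound in $\Zpd(N,M)$ and $\Zhyp(N,M)$ separately. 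In each zone the estimate of the summand native to that zone is used directly; the contribution of the other summand is controlled via the cut-off convention discussed in the remark on cut-off functions, under which $a_1$ and $a_2$ are effectively supported in $\Zpd$ and $\Zhyp$ respectively, so that the cross-zone contributions are either zero or dominated by the native estimate after a harmless enlargement of the zone constant.

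Inside $\Zpd(N,M)$ the defining inequality of $\T_{N,M}(0,0,0)$ collapses to $|D_x^\beta \partial_\xi^\alpha a_1| \leq C_{\alpha,\beta}\jxi^{-|\alpha|}$, since the factors $\varrho^{m_1}$ and $(\partial_t\varrho/\varrho)^{m_2}$ become unity for $m_1 = m_2 = 0$; this is a $\Sy^0$-estimate and hence a $\Sy^m$-estimate for any $m \geq 0$. Inside $\Zhyp(N,M)$ the bulk of the work consists of converting the five temporal weight factors in the $\S_{N,M}$-estimate for $a_2$ into powers of $\jxi$, using the zone inequality $\jxi \geq N(w(\Lambda(t)))^m$ and Proposition~\ref{CP2:Remark:EstLambdaW}. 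Concretely: (i) $\lambda(t)$ is bounded on $[0,T]$, giving $\lambda(t)^{l_2} = O(1)$ for $l_2 \geq 0$; (ii) boundedness of $\lambda$ combined with Proposition~\ref{CP2:Remark:EstLambdaW}(i) yields $\lambda(t)/\Lambda(t) \leq C(w(\Lambda(t)))^m \leq C\jxi$, so $(\lambda/\Lambda)^{l_3} \leq C\jxi^{l_3}$ for $l_3 \geq 0$; (iii) the zone inequality gives $(w(\Lambda(t)))^{l_4} \leq C\jxi^{l_4/m} \leq C\jxi^{l_4}$ for $l_4 \geq 0$; and (iv) since $\mu(s) = s\vp(s^{-1}) \leq 1$ on $[0,1]$ one has $\vp(\jxi) \leq \jxi$ for large $|\xi|$, hence $\vp(\jxi)^{l_5} \leq C\jxi^{l_5}$ for $l_5 \geq 0$. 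Multiplying produces the exponent $l_1 + l_3 + l_4 + l_5$; for any $l_i < 0$ the corresponding factor is bounded and contributes a non-positive power of $\jxi$, which is absorbed into $\max\{0,\,\cdot\}$.

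The key obstacle is step (ii): converting the factor $(\lambda(t)/\Lambda(t))^{l_3}$, which diverges as $t\to 0^+$, into a controlled power of $\jxi$. This is precisely where the special structure $(w(\Lambda(t)))^m = \Lambda(t)^{-s/(s-1)}(\log^{[\widetilde m]}(\Lambda(t)^{-1}))^{\widetilde\beta}$ enters: it ensures the lower bound $(w(\Lambda(t)))^m \geq C/\Lambda(t)$ from Proposition~\ref{CP2:Remark:EstLambdaW}(i), which is exactly what is needed to trade the singularity of $\lambda/\Lambda$ against the hyperbolic-zone inequality. Once this step is in place, assembling the estimates across the two zones is a routine accounting of exponents.
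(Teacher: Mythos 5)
Your argument follows essentially the same path as the paper's proof: bound $a$ by the sum of the two class estimates, then in the hyperbolic zone trade each temporal factor for a power of $\jxi$ via the zone inequality $\jxi \geq N(w(\Lambda(t)))^m$, Proposition~\ref{CP2:Remark:EstLambdaW}(i), and $\vp(\jxi)\leq\jxi$. The only imprecision is the blanket claim that every factor with $l_i<0$ is bounded: this fails for $l_2<0$, since $\lambda(t_\xi)\to 0$ as $|\xi|\to\infty$ so $\lambda(t)^{l_2}$ is unbounded on $\Zhyp(N,M)$; but the paper's own final line carries the factor $\lambda(t)^{l_2+l_3}$ and hence rests on the same implicit sign restriction, so this does not represent a divergence from the paper's route.
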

	\begin{proof}
		The definition of the zones, Proposition~\ref{CP2:Remark:EstLambdaW} and straightforward calculations yield
		\begin{align*}
		 \big|D_x^\beta \partial_\xi^\alpha a(t,\,x,\,\xi)\big| &\begin{aligned}[t] \leq C_{\alpha,\,\beta} \jxi^{-|\alpha|} +  C_{\alpha,\,\beta}\jxi^{l_1-|\alpha|} \lambda(t)^{l_2} \Big(\frac{\lambda(t)}{\Lambda(t)}\Big)^{l_3} (w(\Lambda(t)))^{m l_4}\\\times (\vp(\jxi))^{l_5}
		 \end{aligned}\\
		 & \leq C_{\alpha,\,\beta} \jxi^{-|\alpha|} +  C_{\alpha,\,\beta}\jxi^{l_1-|\alpha| + l_4 + l_5} \lambda(t)^{l_2+l_3} \Big(\frac{1}{\Lambda(t)}\Big)^{l_3}\\
		 & \leq C_{\alpha,\,\beta} \jxi^{-|\alpha|} +  C_{\alpha,\,\beta}\jxi^{l_1-|\alpha| + l_4 + l_5} \lambda(t)^{l_2+l_3} (w(\Lambda(t)))^{ml_3}\\
		 & \leq C_{\alpha,\,\beta} \jxi^{-|\alpha|} +  C_{\alpha,\,\beta}\jxi^{l_1-|\alpha| + l_3+l_4 + l_5} \lambda(t)^{l_2+l_3}.
		\end{align*}
	\end{proof}

	\begin{Proposition}\label{Prop:TtoS}
		Let $ a = a(t,\,x,\,\xi)$ be a symbol with
		\begin{equation*}
		a \in \T_{N,\,M}(m_1,\,m_2,\,m_3) + \S_{N,\,M}(0,\,0,\,0,\,0,\,0).
		\end{equation*}
		Then we have
		\begin{equation*}
		a \in L^\infty_\text{loc}\big((0,\,T];\,\Sy^{\max\{0,\,\frac{m_1}{m} + m_2+m_3,\,m_2+m_3\}}\big).
		\end{equation*}
	\end{Proposition}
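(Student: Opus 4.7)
The plan is to mimic the proof of Proposition~\ref{Prop:StoS}: split $a = a_1 + a_2$ with $a_1 \in \T_{N,M}(m_1,m_2,m_3)$ and $a_2 \in \S_{N,M}(0,0,0,0,0)$, estimate each summand on its home zone via its defining inequality, and sum the resulting $\jxi$-powers. The summand $a_2$ is immediate: its defining estimate reads $|D^\beta_x \partial^\alpha_\xi a_2| \leq C_{\alpha,\beta}\jxi^{-|\alpha|}$ on $\Zhyp(N,M)$, with a $t$-independent constant, so it already lies in $L^\infty([0,T]; \Sy^0)$ and contributes the ``$0$'' in the target maximum.

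For $a_1$ on $\Zpd(N,M)$, I begin from
\[
|D_x^\beta \partial_\xi^\alpha a_1| \leq C_{\alpha,\beta}\,\varrho(t,\xi)^{m_1}\Big(\tfrac{\partial_t\varrho(t,\xi)}{\varrho(t,\xi)}\Big)^{m_2} \jxi^{m_3-|\alpha|}
\]
and convert the $\varrho$-factors into powers of $\jxi$. The defining relation $\varrho^m = 1 + \jxi \lambda(t)^m (w(\Lambda(t)))^{m(m-1)}$ yields the elementary bound $\varrho(t,\xi) \leq C(t)\jxi^{1/m}$, while Lemma~\ref{Lemma:rho}(ii) gives $\partial_t\varrho/\varrho \leq C(t)\jxi$, with a prefactor $C(t)$ depending only on $\lambda(t)$ and $w(\Lambda(t))$. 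When $m_1 \geq 0$ I use the upper bound on $\varrho$; when $m_1 < 0$ I instead use $\varrho \geq 1$ to get $\varrho^{m_1} \leq 1$, which explains why the case $m_1 < 0$ produces the candidate $m_2+m_3$ rather than $m_1/m+m_2+m_3$ in the target. Handling $(\partial_t\varrho/\varrho)^{m_2}$ analogously, using as needed the positive lower bound $\partial_t\varrho/\varrho \geq c(t) > 0$ valid for $\jxi \geq 1$ and $t$ bounded away from $0$, I obtain
\[
\varrho^{m_1}(\partial_t\varrho/\varrho)^{m_2} \leq C(t)\jxi^{\max\{m_1/m+m_2,\,m_2\}}.
\]
Multiplying by $\jxi^{m_3-|\alpha|}$ and taking the maximum with the $a_2$-estimate in $\Zhyp(N,M)$ recovers the target exponent $\max\{0, m_1/m+m_2+m_3,\, m_2+m_3\}$.

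The main obstacle is the $t$-dependence of the prefactor $C(t)$: since $\lambda(t)^m (w(\Lambda(t)))^{m(m-1)}$ diverges as $t\to 0^+$, the resulting symbol-class estimate is only uniform on compact subsets of $(0,T]$, which is precisely the $L^\infty_\text{loc}((0,T];\,\cdot)$-formulation appearing in the statement. A secondary point is the behavior of $a_1$ outside $\Zpd(N,M)$ and of $a_2$ outside $\Zhyp(N,M)$; as in the proof of Proposition~\ref{Prop:StoS}, one localizes each summand to its home zone with the cut-off functions of the remark preceding the proposition, so that summing the two zone-estimates yields the claimed global symbol-class membership.
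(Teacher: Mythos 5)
Your argument follows the paper's proof closely: both decompose $a$ into its $\T$- and $\S$-parts, control the $\T$-part by expanding $\varrho^{m}=1+\jxi\lambda^{m}(w(\Lambda))^{m(m-1)}$ and invoking Lemma~\ref{Lemma:rho}(ii), note that the surviving $t$-dependent factors are bounded only on $[\ve,T]$ (hence $L^\infty_\text{loc}$), and read off the exponent $\max\{0,\tfrac{m_1}{m}+m_2+m_3,\,m_2+m_3\}$. The only cosmetic difference is that you branch explicitly on the sign of $m_1$ while the paper keeps both contributions via $(1+x)^{m_1/m}\leq C(1+x^{m_1/m})$; the content is the same.
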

	\begin{proof}
		The definition of the zones, Lemma~\ref{Lemma:rho} and straightforward calculations yield
		\begin{align*}
			|D_x^\beta \partial_\xi^\alpha a(t,\,x,\,\xi)| & \leq C_{\alpha,\,\beta} \varrho^{m_1}(t,\,\xi) \Big(\frac{\partial_t \varrho(t,\,\xi)}{\varrho(t,\,\xi)}\Big)^{m_2}\jxi^{m_3-|\alpha|} +  C_{\alpha,\,\beta}\jxi^{-|\alpha|} \\
			&\begin{aligned}[t]
				\leq C_{\alpha,\,\beta}& \Big(1+ \jxi \lambda^m(t) (w(\Lambda(t)))^{m(m-1)}\Big)^{\frac{m_1}{m}}\\&\times \Big(\lambda^m(t) \frac{\lambda(t)}{\Lambda(t)} (w(\Lambda(t)))^{m(m-1)} \jxi\Big)^{m_2} \jxi^{m_3-|\alpha|}\\ &+  C_{\alpha,\,\beta}\jxi^{-|\alpha|}
				\end{aligned}\\
			&\begin{aligned}[t]
				\leq C_{\alpha,\,\beta}& \Big(1+ \jxi^{\frac{m_1}{m}} \lambda^{m_1}(t) (w(\Lambda(t)))^{m_1(m-1)}\Big)\\&\times \Big(\lambda^m(t) \frac{\lambda(t)}{\Lambda(t)} (w(\Lambda(t)))^{m(m-1)}\Big)^{m_2} \jxi^{m_2+m_3-|\alpha|}\\ &+  C_{\alpha,\,\beta}\jxi^{-|\alpha|}.
			\end{aligned}
		\end{align*}
	From this we conclude the assertion of the lemma, since the terms
	\begin{equation*}
	\lambda^{m_1}(t) \big(w(\Lambda(t))\big)^{m_1(m-1)}\text{ and }\lambda^m(t) \frac{\lambda(t)}{\Lambda(t)} \big(w(\Lambda(t))\big)^{m(m-1)},
	\end{equation*}
	are bounded on any interval $[\ve,\,T],\,\ve > 0$.
	\end{proof}

	\begin{Lemma}[Asymptotic expansion]\label{Lemma:AsymExp}
		Let $\{a_j\}_j$ be a sequence of symbols with
		\begin{equation*}
		a_j \in \T_{N,\,M}(m_1,\,m_2,\,m_3-j) + \S_{N,\,M}(l_1-j,\,l_2,\,l_3,\,l_4,\,l_5),\,j \geq 0.
		\end{equation*}
		Then there is a symbol
		\begin{equation*}
		a \in T_{N,\,M}(m_1,\,m_2,\,m_3) + \S_{N,\,M}(l_1,\,l_2,\,l_3,\,l_4,\,l_5),
		\end{equation*}
		such that
		\begin{equation*}
			a(t,\,x,\,\xi) \sim \sum\limits_{j = 0}^\infty a_j(t,\,x,\,\xi),
		\end{equation*}
	that is
	\begin{equation*}
	\begin{aligned}
		a(t,\,x,\,\xi) - \sum\limits_{j = 0}^{j_0-1} a_j(t,\,x,\,\xi) &\in \T_{N,\,M}(m_1,\,m_2,\,m_3-j_0)\\ &+ \S_{N,\,M}(l_1-j_0,\,l_2,\,l_3,\,l_4,\,l_5),
		\end{aligned}
	\end{equation*}
	for all $j_0 \geq 1$. The symbol is uniquely determined modulo $L^\infty_\text{loc}\big((0,\,T];\,\Sy^{-\infty}\big)$.
	\end{Lemma}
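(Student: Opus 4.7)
The plan is a Borel-type summation argument adapted to the two symbol classes. Since $a_j \in \T_{N,\,M}(m_1,\,m_2,\,m_3-j) + \S_{N,\,M}(l_1-j,\,l_2,\,l_3,\,l_4,\,l_5)$, I would first decompose $a_j = a_j^\T + a_j^\S$ with $a_j^\T \in \T_{N,\,M}(m_1,\,m_2,\,m_3-j)$ and $a_j^\S \in \S_{N,\,M}(l_1-j,\,l_2,\,l_3,\,l_4,\,l_5)$, and handle the two series separately.

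Fix a cut-off $\chi \in C^\infty(\R^n)$ with $\chi(\xi)=0$ for $|\xi|\le 1$ and $\chi(\xi)=1$ for $|\xi|\ge 2$. Choose a sequence $R_j \nearrow \infty$ of positive numbers (to be fixed) and set
\begin{equation*}
a^\S(t,\,x,\,\xi) = \sum_{j=0}^\infty \chi(\xi/R_j)\,a_j^\S(t,\,x,\,\xi), \qquad a^\T(t,\,x,\,\xi) = \sum_{j=0}^\infty \chi(\xi/R_j)\,a_j^\T(t,\,x,\,\xi).
\end{equation*}
On the support of $\chi(\cdot/R_j)$ one has $\jxi \ge R_j$, and every derivative of $\chi(\cdot/R_j)$ costs at most $R_j^{-1} \le \jxi^{-1}$. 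Hence, by Leibniz and the $\S_{N,\,M}$-estimate for $a_j^\S$, for any $(\alpha,\beta)$ with $|\alpha|+|\beta| < j$,
\begin{equation*}
\big|D_x^\beta \partial_\xi^\alpha\bigl(\chi(\xi/R_j) a_j^\S\bigr)\big| \le C_{\alpha,\beta,j}\, R_j^{-1}\, \jxi^{l_1-|\alpha|-(j-1)} \lambda(t)^{l_2}\Big(\tfrac{\lambda(t)}{\Lambda(t)}\Big)^{l_3}(w(\Lambda(t)))^{l_4}(\vp(\jxi))^{l_5}.
\end{equation*}
A standard diagonalization then allows to choose $R_j$ so large that, simultaneously for all $(\alpha,\beta)$ with $|\alpha|+|\beta|\le j-1$, the right-hand side is bounded by $2^{-j}$ times the target $\S_{N,\,M}(l_1-1,\,l_2,\,l_3,\,l_4,\,l_5)$-bound; only finitely many such pairs appear at each level. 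This makes the series for $a^\S$ converge in $C\big([0,\,T];\,C^\infty(\R^n\times\R^n)\big)$ and produces the expected remainder estimate, because $(1-\chi(\cdot/R_j))a_j^\S$ is supported in $\{\jxi \le 2R_j\}$ and can be absorbed into the $\S_{N,\,M}(l_1-j_0,\ldots)$ bound for $j < j_0$.

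For the $\T$-series the same scheme works once one observes that in $\Zpd(N,\,M)$, the definition of $\varrho$ together with Lemma~\ref{Lemma:rho} and Proposition~\ref{CP2:Remark:EstLambdaW} gives $\varrho(t,\,\xi) \le C\jxi^{1/m}$ and $\partial_t\varrho/\varrho \le C\jxi\lambda^m\tfrac{\lambda}{\Lambda} w^{m(m-1)}$ with the latter controlled by a power of $\jxi$ on the same zone; thus the decay $\jxi^{m_3-j}$ again yields genuine $\jxi$-smallness when $R_j$ is large, and the Borel construction goes through unchanged.

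Uniqueness is then immediate: if $a$ and $\widetilde a$ both admit $\sum_j a_j$ as asymptotic expansion, then for every $j_0 \ge 1$,
\begin{equation*}
a - \widetilde a \in \T_{N,\,M}(m_1,\,m_2,\,m_3-j_0) + \S_{N,\,M}(l_1-j_0,\,l_2,\,l_3,\,l_4,\,l_5),
\end{equation*}
and by Propositions~\ref{Prop:StoS}~and~\ref{Prop:TtoS} applied with $j_0 \to \infty$ one concludes $a - \widetilde a \in L^\infty_\text{loc}\big((0,\,T];\,\Sy^{-\infty}\big)$. The main obstacle will be bookkeeping the $\T$-part: the factors $\varrho^{m_1}$ and $(\partial_t\varrho/\varrho)^{m_2}$ have exponents of either sign and depend on $t$, so one must verify carefully that the polynomial-in-$\jxi$ control of $\varrho$ on $\Zpd$ is uniform enough to make the Borel estimate work with a single choice of $R_j$ that serves both series; once this is done, the rest is routine.
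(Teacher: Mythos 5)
Your proposal follows essentially the same Borel-summation route as the paper: both excise low frequencies with $\xi$-cut-offs (your $\chi(\xi/R_j)$ vanishing for $\jxi\lesssim R_j$ plays the role of the paper's $\gamma_{\ve_j}(\xi)=1-\chi(\ve_j\xi)$ with $\ve_j\leq 2^{-j}$), extract the needed smallness from the decaying factors $\jxi^{m_3-j}$ and $\jxi^{l_1-j}$ on the support, sum a geometric series, and then invoke Propositions~\ref{Prop:StoS} and~\ref{Prop:TtoS} with $j_0\to\infty$ for the $L^\infty_\text{loc}\big((0,T];\,\Sy^{-\infty}\big)$ uniqueness — your explicit $a_j=a_j^{\T}+a_j^{\S}$ split and the appeal to diagonalization in place of the paper's explicit $\ve_j$ choice are cosmetic. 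One small remark: your closing worry about the $\T$-part is unfounded — the factors $\varrho^{m_1}(\partial_t\varrho/\varrho)^{m_2}$ simply ride along unchanged and need no polynomial-in-$\jxi$ control for the summation to close (the $2^{-j}$-gain comes entirely from $\jxi^{m_3-j}$ restricted to $\jxi\geq R_j$); that control is only needed later, via Proposition~\ref{Prop:TtoS}, to identify the intersection with $\Sy^{-\infty}$, exactly as you do in the uniqueness step.
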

	\begin{proof}
		Let $\chi$ be a $C^\infty$ cut-off function with
		\begin{equation*}
			\chi(\xi) = \begin{cases}
				1, & |\xi| \leq 1,\\
				0,& |\xi| \geq 2,
				\end{cases}
		\end{equation*}
		and $0 \leq \chi \leq 1$. For a sequence of positive numbers $\ve_j\rightarrow 0$, we define
		\begin{equation*}
			\gamma_{\ve_j}(\xi) = 1- \chi(\ve_j \xi).
		\end{equation*}
		We note that $\gamma_{\ve_j}(\xi) = 0$ if $|\xi| < \frac{1}{\ve_j}$. We choose $\ve_j$ such that
		\begin{equation*}
			\ve_j \leq 2^{-j},
		\end{equation*}
		and set
		\begin{equation*}
			a(t,\,x,\,\xi) = \sum\limits_{j=0}^{\infty} \gamma_{\ve_j}(\xi) a_j(t,\,x,\,\xi).
		\end{equation*}
		We note that $a(t,\,x,\,\xi)$ exists (i.e. the series converges pointwise), since for any fixed point $(t,\,x,\,\xi)$ only a finite number of summands contribute to $a(t,\,x,\,\xi)$. Indeed, for fixed $(t,\,x,\,\xi)$ we can always find a $j_0$ such that $|\xi| < \frac{1}{\ve_{j_0}}$ and so
		\begin{equation*}
			a(t,\,x,\,\xi) = \sum \limits_{j = 0}^{j_0-1} \gamma_{\ve_j}(\xi) a_j(t,\,x,\,\xi).
		\end{equation*}
	
		We continue by showing that the symbol \begin{equation*}
		a\in T_{N,\,M}(m_1,\,m_2,\,m_3) + \S_{N,\,M}(l_1,\,l_2,\,l_3,\,l_4,\,l_5).
		\end{equation*}
		To prove this, we first observe that
		\begin{align*}
	\Big|D_x^\beta \partial_\xi^\alpha \big(\gamma_{\ve_j}(\xi) a_j(t,\,x,\,\xi)\big)\Big| & = \bigg| \sum\limits_{\alpha^\prime + \alpha^{\prime\prime} = \alpha} \binom{\alpha}{\alpha^\prime} \partial_\xi^{\alpha^\prime} \gamma_{\ve_j}(\xi) D_x^\beta \partial_\xi^{\alpha^{\prime\prime}} a_j(t,\,x,\,\xi)\bigg|\\
		& \leq \bigg| \gamma_{\ve_j}(\xi) D_x^\beta \partial_\xi^{\alpha} a_j(t,\,x,\,\xi)\\& + \sum\limits_{\substack{\alpha^\prime + \alpha^{\prime\prime} = \alpha\\|\alpha^\prime| > 0}} C_{\alpha^\prime} \widetilde \chi_{\ve_j}(\xi)\jxi^{-|\alpha^\prime|} D_x^\beta  \partial_\xi^{\alpha^{\prime\prime}} a_j(t,\,x,\,\xi)\bigg|,
		\end{align*}
		where $\widetilde \chi_{\ve_j}(\xi)$ is another smooth cut-off function which is non-zero only if $1 \leq \ve_j |\xi| \leq 2$. This new cut-off function describes the support of the derivatives of $\gamma_{\ve_j}(\xi)$. In the last estimate, we also used that $\ve_j \sim \jxi$ if $\widetilde \chi_{\ve_j}(\xi) \neq 0$.
		We conclude that
		\begin{equation*}
		 \begin{aligned}[t]
		\Big|D_x^\beta \partial_\xi^\alpha \big(\gamma_{\ve_j}(\xi)& a_j(t,\,x,\,\xi)\big)\Big|\\
		 &\leq
		\bigg|2^{-j} \bigg[\varrho^{m_1} \Big(\frac{\partial_t \varrho}{\varrho}\Big)^{m_2} \jxi^{m_3+1-j-|\alpha|} \chi\Big(\frac{\jxi}{\frac{N}{2}(w(\Lambda(t)))^m}\Big)\\& + \jxi^{l_1+1-j-|\alpha|} \lambda^{l_2}(t) \Big(\frac{\lambda(t)}{\Lambda(t)}\Big)^{l_3} (w(\Lambda(t)))^{ml_4} \\ &\times(\vp(\jxi))^{l_5} \Big(1 - \chi\Big(\frac{\jxi}{N(w(\Lambda(t)))^m}\Big)\Big)\bigg]\bigg|,
		\end{aligned}
		\end{equation*}
		where we used that $a_j $ belongs to
		\begin{equation*}
		\T_{N,\,M}(m_1,\,m_2,\,m_3-j) + \S_{N,\,M}(l_1-j,\,l_2,\,l_3,\,l_4,\,l_5),
		\end{equation*}
		and estimated $|\xi| \geq 2^{j}$ (due to the support of cut-off functions) once in each summand.
		Using this relation, we obtain
		\begin{align*}
		\big|D_x^\beta \partial_\xi^\alpha &a(t,\,x,\,\xi)\big|\\  &\leq \sum\limits_{j=0}^{j_0-1}\Big|D_x^\beta \partial_\xi^\alpha \big(\gamma_{\ve_j}(\xi) a_j(t,\,x,\,\xi)\big)\Big|\\
		&\leq \begin{aligned}[t]
		 &C_{\alpha,\,\beta} \bigg[\varrho^{m_1} \Big(\frac{\partial_t \varrho}{\varrho}\Big)^{m_2} \jxi^{m_3-|\alpha|} \chi\Big(\frac{\jxi}{\frac{N}{2}(w(\Lambda(t)))^m}\Big)\\& + \jxi^{l_1-|\alpha|} \lambda^{l_2}(t) \Big(\frac{\lambda(t)}{\Lambda(t)}\Big)^{l_3} (w(\Lambda(t)))^{m l_4} \\ &\times(\vp(\jxi))^{l_5} \Big(1 - \chi\Big(\frac{\jxi}{N(w(\Lambda(t)))^m}\Big)\Big)\bigg]\\
		 &+
		 \sum\limits_{j=1}^{j_0-1} 2^{-j} \bigg[\varrho^{m_1} \Big(\frac{\partial_t \varrho}{\varrho}\Big)^{m_2} \jxi^{m_3+1-j-|\alpha|} \chi\Big(\frac{\jxi}{\frac{N}{2}(w(\Lambda(t)))^m}\Big)\\& + \jxi^{l_1+1-j-|\alpha|} \lambda^{l_2}(t) \Big(\frac{\lambda(t)}{\Lambda(t)}\Big)^{l_3} (w(\Lambda(t)))^{m l_4} \\ &\times(\vp(\jxi))^{l_5} \Big(1 - \chi\Big(\frac{\jxi}{N(w(\Lambda(t)))^m}\Big)\Big)\bigg]
		\end{aligned}\\
		& \leq \begin{aligned}[t]
		&C_{\alpha,\,\beta} \bigg[\varrho^{m_1} \Big(\frac{\partial_t \varrho}{\varrho}\Big)^{m_2} \jxi^{m_3-|\alpha|} \chi\Big(\frac{\jxi}{\frac{N}{2}(w(\Lambda(t)))^m}\Big)\\& + \jxi^{l_1-|\alpha|} \lambda^{l_2}(t) \Big(\frac{\lambda(t)}{\Lambda(t)}\Big)^{l_3} (w(\Lambda(t)))^{m l_4} \\ &\times(\vp(\jxi))^{l_5} \Big(1 - \chi\Big(\frac{\jxi}{N(w(\Lambda(t)))^m}\Big)\Big)\bigg].
		\end{aligned}
		\end{align*}
		
		As for the remainder of the series, we have
		\begin{align*}
		&\Big| D_x^\beta \partial_\xi^\alpha \Big(\sum\limits_{j = j_0}^{\infty} \gamma_{\ve_j}(\xi) a_j(t,\,x,\,\xi)\Big)\Big|\\
		&\leq \Big| D_x^\beta \partial_\xi^\alpha \big(\gamma_{\ve_{j_0}}(\xi) a_{j_0}(t,\,x,\,\xi)\big)\Big| + \sum\limits_{j = j_0+1}^{\infty}\Big|D_x^\beta \partial_\xi^\alpha \big(\gamma_{\ve_j}(\xi) a_j(t,\,x,\,\xi)\big)\Big|\\
		&\leq \begin{aligned}[t]
		&C_{\alpha,\,\beta} \bigg[\varrho^{m_1} \Big(\frac{\partial_t \varrho}{\varrho}\Big)^{m_2} \jxi^{m_3-j_0-|\alpha|} \chi\Big(\frac{\jxi}{\frac{N}{2}(w(\Lambda(t)))^m}\Big)\\& + \jxi^{l_1-j_0-|\alpha|} \lambda^{l_2}(t) \Big(\frac{\lambda(t)}{\Lambda(t)}\Big)^{l_3} (w(\Lambda(t)))^{m l_4} \\ &\times(\vp(\jxi))^{l_5} \Big(1 - \chi\Big(\frac{\jxi}{N(w(\Lambda(t)))^m}\Big)\Big)\bigg]\\
		&+
		\sum\limits_{j=j_0+1}^{\infty} 2^{-j} \bigg[\varrho^{m_1} \Big(\frac{\partial_t \varrho}{\varrho}\Big)^{m_2} \jxi^{m_3+1-j-|\alpha|} \chi\Big(\frac{\jxi}{\frac{N}{2}(w(\Lambda(t)))^m}\Big)\\& + \jxi^{l_1+1-j-|\alpha|} \lambda^{l_2}(t) \Big(\frac{\lambda(t)}{\Lambda(t)}\Big)^{l_3} (w(\Lambda(t)))^{m l_4} \\ &\times(\vp(\jxi))^{l_5} \Big(1 - \chi\Big(\frac{\jxi}{N(w(\Lambda(t)))^m}\Big)\Big)\bigg]
		\end{aligned}\\
		&\leq \begin{aligned}[t]
		&C_{\alpha,\,\beta} \bigg[\varrho^{m_1} \Big(\frac{\partial_t \varrho}{\varrho}\Big)^{m_2} \jxi^{m_3-j_0-|\alpha|} \chi\Big(\frac{\jxi}{\frac{N}{2}(w(\Lambda(t)))^m}\Big)\\& + \jxi^{l_1-j_0-|\alpha|} \lambda^{l_2}(t) \Big(\frac{\lambda(t)}{\Lambda(t)}\Big)^{l_3} (w(\Lambda(t)))^{m l_4} \\ &\times(\vp(\jxi))^{l_5} \Big(1 - \chi\Big(\frac{\jxi}{N(w(\Lambda(t)))^m}\Big)\Big)\bigg].
		\end{aligned}
		\end{align*}
		Thus,
		\begin{equation*}
		\begin{aligned}
		a(t,\,x,\,\xi) - \sum\limits_{j = 0}^{j_0-1} a_j(t,\,x,\,\xi) &\in \T_{N,\,M}(m_1,\,m_2,\,m_3-j_0)\\ &+ \S_{N,\,M}(l_1-j_0,\,l_2,\,l_3,\,l_4,\,l_5).
		\end{aligned}
		\end{equation*}
		
		Lastly, we use Proposition~\ref{Prop:StoS} and Proposition~\ref{Prop:TtoS} to conclude that if a symbol
		\begin{equation*}
		a_j \in \T_{N,\,M}(m_1,\,m_2,\,m_3-j) + \S_{N,\,M}(l_1-j,\,l_2,\,l_3,\,l_4,\,l_5),
		\end{equation*}
		then also
		\begin{equation*}
		 a_j \in L^\infty_\text{loc}\big((0,\,T];\,\Sy^{\max\{l_1-j+l_3+l_4+l_5,\,m_2+m_3-j,\,\frac{m_1}{m} + m_2 + m_3-j\}}\big).
		\end{equation*} If $j$ tends to $+\infty$, then the intersection of all those spaces belongs to the space $L^\infty_\text{loc}\big((0,\,T];\,\Sy^{-\infty}\big)$. This completes the proof.
	\end{proof}

	\begin{Lemma}\label{Lemma:SymbolProd}
		Let
		\begin{equation*}
			a \in \T_{2N,\,M}(m_1,\,m_2,\,m_3) + \S_{N,\,M}(l_1,\,l_2,\,l_3,\,l_4,\,l_5),
		\end{equation*}
		and
		\begin{equation*}
		\widetilde a \in \T_{2N,\,M}(\widetilde m_1,\,\widetilde m_2,\,\widetilde m_3) + \S_{N,\,M}(\widetilde l_1,\,\widetilde l_2,\,\widetilde l_3,\,\widetilde l_4,\,\widetilde l_5),
		\end{equation*}
		then
		\begin{equation*}
			\begin{aligned}[t]
			a \widetilde a &\in  \T_{2N,\,M}(m_1+\widetilde m_1,\,m_2+\widetilde m_2,\,m_3+\widetilde m_3)\\ &+ \S_{N,\,M}(l_1+\widetilde l_1 ,\,l_2+\widetilde l_2,\,l_3+ \widetilde l_3,\,l_4+\widetilde l_4,\,l_5+\widetilde l_5) \\ &+  \S_{N,\,M}(m_1+m_3+\widetilde l_1 ,\,m_1+\widetilde l_2,\,m_2+ \widetilde l_3,\,\widetilde l_4,\,\widetilde l_5) \\ &+  \S_{N,\,M}(\widetilde m_1 + \widetilde m_3+ l_1 ,\,\widetilde m_1 + l_2,\,\widetilde m_2+l_3,\,l_4,\,l_5).
			\end{aligned}
		\end{equation*}
	\end{Lemma}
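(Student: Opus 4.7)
The plan is to use the bilinearity of pointwise multiplication: decompose $a = a_T + a_S$ with $a_T \in \T_{2N,M}(m_1, m_2, m_3)$ and $a_S \in \S_{N,M}(l_1, \ldots, l_5)$, and similarly $\widetilde a = \widetilde a_T + \widetilde a_S$, then expand
\[
a \widetilde a = a_T \widetilde a_T + a_T \widetilde a_S + a_S \widetilde a_T + a_S \widetilde a_S
\]
and place each of the four cross products into one of the four summands appearing in the statement of the lemma.

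The two ``pure'' terms are handled immediately by the multiplicative closure of each class separately. Part (iii) of Proposition~\ref{Prop:SymbolProp} places $a_T \widetilde a_T$ in $\T_{2N,M}(m_1 + \widetilde m_1, m_2 + \widetilde m_2, m_3 + \widetilde m_3)$, which is the first summand. Part (ix) places $a_S \widetilde a_S$ in $\S_{N,M}(l_1 + \widetilde l_1, l_2 + \widetilde l_2, l_3 + \widetilde l_3, l_4 + \widetilde l_4, l_5 + \widetilde l_5)$, which is the second summand.

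The core of the lemma lies in the two mixed terms, and this is where the zone geometry enters. I treat $a_T \widetilde a_S$ in detail; the term $a_S \widetilde a_T$ is then handled by symmetric swapping of roles, producing the fourth summand from the third. Using the remark that symbols which are identically zero in $\Zpd(2N,M)$ belong to $\T_{2N,M}(0,0,-\infty)$, I may without loss of generality multiply $a_T$ by the cut-off $\chi(\jxi / N(w(\Lambda(t)))^m)$, which restricts its support into $\Zpd(2N,M)$ without altering its class. Since $\widetilde a_S$ is only estimated in $\Zhyp(N,M)$, the effective region of estimation for the product is the overlap $\Zhyp(N,M) \cap \Zpd(2N,M)$, i.e.\ the strip $N (w(\Lambda(t)))^m \leq \jxi \leq 2 N (w(\Lambda(t)))^m$. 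On this strip, the definition of $\varrho$ together with Lemma~\ref{Lemma:rho} yields
\[
\varrho(t,\xi) \sim \jxi \lambda(t), \qquad \frac{\partial_t \varrho(t,\xi)}{\varrho(t,\xi)} \sim \frac{\lambda(t)}{\Lambda(t)},
\]
as already noted in the preceding remark. Substituting these equivalences into the $\T_{2N,M}(m_1, m_2, m_3)$-bound for $a_T$ converts it into an $\S$-bound with indices $(m_1 + m_3,\, m_1,\, m_2,\, 0,\, 0)$ on the overlap. Multiplying by the $\S_{N,M}(\widetilde l_1, \ldots, \widetilde l_5)$-bound on $\widetilde a_S$ and invoking part (ix) of Proposition~\ref{Prop:SymbolProp} places $a_T \widetilde a_S$ in $\S_{N,M}(m_1 + m_3 + \widetilde l_1,\, m_1 + \widetilde l_2,\, m_2 + \widetilde l_3,\, \widetilde l_4,\, \widetilde l_5)$, matching the third summand of the claim.

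Derivatives in $x$ and $\xi$ are propagated through the Leibniz rule combined with parts (i), (ii), (vii), (viii) of Proposition~\ref{Prop:SymbolProp}; these steps are routine bookkeeping. The principal obstacle is thus isolated in the treatment of the mixed terms, namely the recognition that on the strip where the two zones meet, the $\T$-class weights $\varrho$ and $\partial_t \varrho / \varrho$ collapse quantitatively to $\jxi \lambda(t)$ and $\lambda(t)/\Lambda(t)$, which is the precise mechanism that allows a pseudodifferential-type symbol to be reinterpreted as a hyperbolic-type symbol in the overlap. Once this identification is in place, the four cross products line up exactly with the four summands of the claim.
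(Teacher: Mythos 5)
Your proposal matches the paper's proof in all essentials: both expand the product by Leibniz, both place the two pure terms directly via the multiplicative closure of the single zone classes (Proposition~\ref{Prop:SymbolProp} (iii) and (ix)), and both handle the two mixed terms by observing that they only contribute on the overlap strip $N(w(\Lambda(t)))^m < \jxi < 2N(w(\Lambda(t)))^m$, where the bounds
\begin{equation*}
\varrho(t,\xi) \leq C\,\jxi\lambda(t), \qquad \frac{\partial_t\varrho(t,\xi)}{\varrho(t,\xi)} \leq C\,\frac{\lambda(t)}{\Lambda(t)},
\end{equation*}
convert the $\T$-type weight into the $\S$-type weight that produces the third and fourth summands. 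Your ``WLOG multiply by the cut-off'' sentence is phrased a bit loosely --- $\T_{2N,M}$ membership in the definition only constrains a symbol on $\Zpd(2N,M)$, so inserting the cut-off is really reflecting the fact that all symbols in this calculus are built with cut-offs localizing the $\T$ part to $\Zpd(2N,M)$ and the $\S$ part to $\Zhyp(N,M)$; the paper's own displayed estimate carries the same implicit support convention (its bound features the factors $\chi$, $1-\chi$, and $\chi(1-\chi)$ without further comment). Apart from that minor imprecision of wording, which is shared with the source, the argument and mechanism are identical.
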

	\begin{proof}
		We obtain by straightforward computation that
		\begin{align*}
			\big|&D_x^\beta \partial_\xi^\alpha a(t,\,x,\,\xi) \widetilde a(t,\,x,\,\xi)\big|\\ & \leq \begin{aligned}[t]
				\sum\limits_{\substack{\beta^\prime + \beta^{\prime\prime} = \beta\\\alpha^\prime+\alpha^{\prime\prime} = \alpha}} C_{\alpha^\prime,\,\beta^\prime} |D_x^{\beta^\prime} \partial_\xi^{\alpha^\prime} a(t,\,x,\,\xi)| |D_x^{\beta^{\prime\prime}} \partial_\xi^{\alpha^{\prime\prime}}  \widetilde a(t,\,x,\,\xi)|
				\end{aligned}\\
			&\leq \begin{aligned}[t]
				& C_{\alpha,\,\beta} \Big[\varrho^{m_1+\widetilde m_1} \Big(\frac{\partial_t \varrho}{\varrho}\Big)^{m_2+\widetilde m_2} \jxi^{m_3 + \widetilde m_3 - |\alpha|}\chi\Big(\frac{\jxi}{N(w(\Lambda(t)))^m}\Big)\\
				&+ \jxi^{l_1+\widetilde l_1 - |\alpha|} \lambda(t)^{l_2+\widetilde l_2} \Big(\frac{\lambda(t)}{\Lambda(t)}\Big)^{l_3 + \widetilde l_3} (w(\Lambda(t)))^{m(l_4 + \widetilde l_4)} (\vp(\jxi))^{l_5+\widetilde l_5}\\ &\times\Big(1- \chi\Big(\frac{\jxi}{N(w(\Lambda(t)))^m}\Big)\Big)\\
				&+ \varrho^{m_1} \Big(\frac{\partial_t \varrho}{\varrho}\Big)^{m_2} \jxi^{m_3 + \widetilde l_1 - |\alpha|} \lambda(t)^{\widetilde l_2} \Big(\frac{\lambda(t)}{\Lambda(t)}\Big)^{\widetilde l_3} (w(\Lambda(t)))^{m\widetilde l_4} (\vp(\jxi))^{\widetilde l_5} \\&\times\chi\Big(\frac{\jxi}{N(w(\Lambda(t)))^m}\Big)\Big(1- \chi\Big(\frac{\jxi}{N(w(\Lambda(t)))^m}\Big)\Big)\\
				&+ \varrho^{\widetilde m_1} \Big(\frac{\partial_t \varrho}{\varrho}\Big)^{\widetilde m_2} \jxi^{\widetilde m_3 + l_1- |\alpha|} \lambda(t)^{l_2} \Big(\frac{\lambda(t)}{\Lambda(t)}\Big)^{l_3 } (w(\Lambda(t)))^{ml_4 } (\vp(\jxi))^{l_5} \\&\times\chi\Big(\frac{\jxi}{N(w(\Lambda(t)))^m}\Big)\Big(1- \chi\Big(\frac{\jxi}{N(w(\Lambda(t)))^m}\Big)\Big)
				\Big].
			\end{aligned}
		\end{align*}
		We note that the last two summands of the above inequality are only non-zero, if $N(w(\Lambda(t))^m < \jxi < 2N(w(\Lambda(t)))^m$ and that, therefore,
		\begin{align*}
			\varrho(t,\,\xi) &= (1+\jxi\lambda(t)^m (w(\Lambda(t)))^m)^{\frac{1}{m}}\leq C \lambda(t) \jxi,\\
			\frac{\partial_t \varrho(t,\,\xi)}{\varrho(t,\,\xi)} & \leq C \frac{\jxi \lambda(t)^m \frac{\lambda(t)}{\Lambda(t)} (w(\Lambda(t)))^{m(m-1)}}{1+\jxi\lambda(t)^m (w(\Lambda(t)))^{m(m-1)}}\leq C \frac{\lambda(t)}{\Lambda(t)},
		\end{align*}
	for $N(w(\Lambda(t))^m < \jxi < 2N(w(\Lambda(t)))^m$.
	This yields
	\begin{align*}
	\big|&D_x^\beta \partial_\xi^\alpha a(t,\,x,\,\xi) \widetilde a(t,\,x,\,\xi)\big|\\
	&\leq \begin{aligned}[t]
		& C_{\alpha,\,\beta} \Big[\varrho^{m_1+\widetilde m_1} \Big(\frac{\partial_t \varrho}{\varrho}\Big)^{m_2+\widetilde m_2} \jxi^{m_3 + \widetilde m_3 - |\alpha|}\chi\Big(\frac{\jxi}{N(w(\Lambda(t)))^m}\Big)\\
		&+ \jxi^{l_1+\widetilde l_1 - |\alpha|} \lambda(t)^{l_2+\widetilde l_2} \Big(\frac{\lambda(t)}{\Lambda(t)}\Big)^{l_3 + \widetilde l_3} (w(\Lambda(t)))^{m(l_4 + \widetilde l_4)} (\vp(\jxi))^{l_5+\widetilde l_5}\\ &\times\Big(1- \chi\Big(\frac{\jxi}{N(w(\Lambda(t)))^m}\Big)\Big)\\
		&+ \jxi^{m_1+m_3 + \widetilde l_1 - |\alpha|} \lambda(t)^{m_1+\widetilde l_2} \Big(\frac{\lambda(t)}{\Lambda(t)}\Big)^{m_2+\widetilde l_3} (w(\Lambda(t)))^{m\widetilde l_4} (\vp(\jxi))^{\widetilde l_5} \\&\times\chi\Big(\frac{\jxi}{N(w(\Lambda(t)))^m}\Big)\Big(1- \chi\Big(\frac{\jxi}{N(w(\Lambda(t)))^m}\Big)\Big)\\
		&+  \jxi^{\widetilde m_1+\widetilde m_3 + l_1- |\alpha|} \lambda(t)^{\widetilde m_1+l_2} \Big(\frac{\lambda(t)}{\Lambda(t)}\Big)^{\widetilde m_2+l_3 } (w(\Lambda(t)))^{ml_4 } (\vp(\jxi))^{l_5} \\&\times\chi\Big(\frac{\jxi}{N(w(\Lambda(t)))^m}\Big)\Big(1- \chi\Big(\frac{\jxi}{N(w(\Lambda(t)))^m}\Big)\Big)
		\Big],
	\end{aligned}
	\end{align*}
	so that we conclude
	\begin{align*}
		a \widetilde a &\in  \T_{2N,\,M}(m_1+\widetilde m_1,\,m_2+\widetilde m_2,\,m_3+\widetilde m_3)\\ &+ \S_{N,\,M}(l_1+\widetilde l_1 ,\,l_2+\widetilde l_2,\,l_3+ \widetilde l_3,\,l_4+\widetilde l_4,\,l_5+\widetilde l_5) \\ &+  \S_{N,\,M}(m_1+m_3+\widetilde l_1 ,\,m_1+\widetilde l_2,\,m_2+ \widetilde l_3,\,\widetilde l_4,\,\widetilde l_5) \\ &+  \S_{N,\,M}(\widetilde m_1 + \widetilde m_3+ l_1 ,\,\widetilde m_1 + l_2,\,\widetilde m_2+l_3,\,l_4,\,l_5).
	\end{align*}
	\end{proof}
	\begin{Remark}
		If the symbols $a$ and $\widetilde a$ in the previous lemma belong to \begin{equation*}
		\T_{N,\,M}(m_1,\,m_2,\,m_3) + \S_{N,\,M}(l_1,\,l_2,\,l_3,\,l_4,\,l_5),
		\end{equation*}
		and
		\begin{equation*}
		\T_{N,\,M}(\widetilde m_1,\,\widetilde m_2,\,\widetilde m_3) + \S_{N,\,M}(\widetilde l_1,\,\widetilde l_2,\,\widetilde l_3,\,\widetilde l_4,\,\widetilde l_5),
		\end{equation*}
		respectively, then their product belongs to
		 \begin{equation*}
		 \T_{N,\,M}(m_1+\widetilde m_1,\,m_2 + \widetilde m_2,\,m_3 + \widetilde m_3) + \S_{N,\,M}(l_1 \widetilde l_1,\,l_2 + \widetilde l_2,\,l_3 + \widetilde l_3,\,l_4 + \widetilde l_4,\,l_5 + \widetilde l_5),
		 \end{equation*} since there is no overlap of the zones $\Zpd(N,\,M)$ and $\Zhyp(N,\,M)$.
	\end{Remark}

	\begin{Lemma}\label{Lemma:CompPseu}
		Let $A$ and $B$ be pseudodifferential operators with symbols 
		\begin{equation*}
		a = \sigma(A) \in \T_{2N,\,M}(m_1,\,m_2,\,m_3) + \S_{N,\,M}(l_1,\,l_2,\,l_3,\,l_4,\,l_5),
		\end{equation*}
		and
		\begin{equation*}
		b = \sigma(B) \in \T_{2N,\,M}(\widetilde m_1,\,\widetilde m_2,\,\widetilde m_3) + \S_{N,\,M}(\widetilde l_1,\,\widetilde l_2,\,\widetilde l_3,\,\widetilde l_4,\,\widetilde l_5).
		\end{equation*}
		Then the pseudodifferential operator $C = A \circ B$ has a symbol
		\begin{align*}
			c = \sigma(C) &\in \T_{2N,\,M}(m_1+\widetilde m_1,\,m_2+\widetilde m_2,\,m_3+\widetilde m_3)\\ &+ \S_{N,\,M}(l_1+\widetilde l_1 ,\,l_2+\widetilde l_2,\,l_3+ \widetilde l_3,\,l_4+\widetilde l_4,\,l_5+\widetilde l_5) \\ &+  \S_{N,\,M}(m_1+m_3+\widetilde l_1 ,\,m_1+\widetilde l_2,\,m_2+ \widetilde l_3,\,\widetilde l_4,\,\widetilde l_5) \\ &+  \S_{N,\,M}(\widetilde m_1 + \widetilde m_3+ l_1 ,\,\widetilde m_1 + l_2,\,\widetilde m_2+l_3,\,l_4,\,l_5),
		\end{align*}
		and satisfies
		\begin{equation}\label{Lemma:CompPseu:eq}
			c(t,\,x,\,\xi) \sim \sum\limits_{\alpha \in \N^n} \frac{1}{\alpha!} \partial_\xi^\alpha a(t,\,x,\,\xi) D_x^\alpha b(t,\,x,\,\xi).
		\end{equation}
	The operator $C$ is uniquely determined modulo an operator with symbol from $L^\infty_\text{loc}\big((0,\,T];\,\Sy^{-\infty}\big)$.
	\end{Lemma}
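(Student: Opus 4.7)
The plan is to follow the standard oscillatory-integral approach to composition of pseudodifferential operators, but tracking the $\T_{2N,M}$- and $\S_{N,M}$-parts of the symbols through every step. Write the composed symbol as the oscillatory integral
\begin{equation*}
c(t,x,\xi) = \mathrm{Os-}\iint e^{-\I y\cdot\eta}\, a(t,x,\xi+\eta)\, b(t,x+y,\xi)\, \rmd y\, \db\eta,
\end{equation*}
and Taylor-expand $a$ in $\eta$ around $0$ up to order $j_0$. After the usual integration by parts, turning the factors $\eta^\alpha$ into $D_y^\alpha$ and differentiating $b$ in its $x$-slot, the partial sum produces exactly the terms $\frac{1}{\alpha!}\partial_\xi^\alpha a\,D_x^\alpha b$ appearing in the formal expansion \eqref{Lemma:CompPseu:eq}, while the remainder is again an oscillatory integral of the usual Taylor form.

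Next I would identify each term $\frac{1}{\alpha!}\partial_\xi^\alpha a\,D_x^\alpha b$ with its symbol class. By Proposition~\ref{Prop:SymbolProp}, the $\partial_\xi^\alpha$ lowers both the third $\T$-index and the first $\S$-index by $|\alpha|$, while $D_x^\alpha$ preserves both classes; Lemma~\ref{Lemma:SymbolProd} then places the product in precisely the sum of the four symbol classes stated in the lemma, with the order parameters shifted down by $|\alpha|$ in the $\T$-third index and in the $\S$-first index. This matches the hypotheses of Lemma~\ref{Lemma:AsymExp} applied with $a_j = \sum_{|\alpha|=j} \frac{1}{\alpha!}\partial_\xi^\alpha a\, D_x^\alpha b$, so there exists a symbol $c$ in the target class that realizes the formal expansion and is uniquely determined modulo $L^\infty_{\text{loc}}((0,T];\,\Sy^{-\infty})$.

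The main obstacle, as always in this kind of lemma, is to show that the remainder
\begin{equation*}
r_{j_0}(t,x,\xi) = c(t,x,\xi) - \sum_{|\alpha|<j_0} \tfrac{1}{\alpha!}\partial_\xi^\alpha a(t,x,\xi)\, D_x^\alpha b(t,x,\xi)
\end{equation*}
has the expected decay in the two zones simultaneously. Here I would decompose $a$ and $b$ each into their $\T_{2N,M}$- and $\S_{N,M}$-summands, get four remainder pieces, and estimate each one on $\Zpd(2N,M)$ and $\Zhyp(N,M)$ separately, together with the transition region $\Zhyp(N,M)\cap \Zpd(2N,M)$ where one uses the identities $\varrho(t,\xi)\sim\jxi\lambda(t)$ and $\partial_t\varrho/\varrho\sim\lambda(t)/\Lambda(t)$ already recorded in the proof of Lemma~\ref{Lemma:SymbolProd} to convert $\T$-factors into $\S$-factors. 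Combined with Proposition~\ref{Prop:StoS} and Proposition~\ref{Prop:TtoS}, which turn the resulting bounds into standard $\Sy^m$ estimates uniformly on $[\ve,T]$, the oscillatory-integral remainder formula with sufficiently many integrations by parts in $y$ and $\eta$ shows that $r_{j_0}$ lies in the $(j_0$-shifted$)$ sum of the four claimed classes. Sending $j_0\to\infty$ and invoking Lemma~\ref{Lemma:AsymExp} yields both the asymptotic expansion \eqref{Lemma:CompPseu:eq} and the uniqueness statement.
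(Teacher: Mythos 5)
Your proposal is correct and follows essentially the same route as the paper: the paper cites the standard composition rules in $\OPS^m$ (justified via Propositions~\ref{Prop:StoS} and \ref{Prop:TtoS}) to obtain the formal expansion \eqref{Lemma:CompPseu:eq}, then classifies each term $\frac{1}{\alpha!}\partial_\xi^\alpha a\,D_x^\alpha b$ using Proposition~\ref{Prop:SymbolProp} and Lemma~\ref{Lemma:SymbolProd}, and finally invokes Lemma~\ref{Lemma:AsymExp} to produce the symbol $c$ and its uniqueness modulo $L^\infty_{\text{loc}}\big((0,T];\Sy^{-\infty}\big)$. You merely unfold the oscillatory-integral derivation of the composition formula and the remainder estimate that the paper treats as black-box facts, so the substance is the same.
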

	\begin{proof}
		In view of Proposition~\ref{Prop:StoS} and Proposition~\ref{Prop:TtoS} it is clear that the operator $C$ is a well-defined pseudodifferential operator. Relation \eqref{Lemma:CompPseu:eq} is a direct consequence of the composition rules in $\OPS^m$. Applying Proposition~\ref{Prop:SymbolProp}, Lemma~\ref{Lemma:SymbolProd} and Lemma~\ref{Lemma:AsymExp} then yields the desired statements.
	\end{proof}

	\begin{Lemma}\label{Lemma:Parametrix}
		Let $A$ be a pseudodifferential operator with an invertible symbol 
		\begin{equation*}
		a = \sigma(A) \in \T_{2N,\,M}(0,\,0,\,0) + \S_{N,\,M}(0,\,0,\,0,\,0,\,0).
		\end{equation*}
		Then there exists a parametrix $A^\#$ with symbol
		\begin{equation*}
		a^\# = \sigma(A^\#) \in \T_{2N,\,M}(0,\,0,\,0) + \S_{N,\,M}(0,\,0,\,0,\,0,\,0).
		\end{equation*}
	\end{Lemma}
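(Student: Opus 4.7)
The plan is to build $A^\#$ as a formal Neumann-type series, mimicking the standard parametrix construction for $\OPS^m_{\rho,\delta}$ but now tracking symbols in the combined class $\T_{2N,\,M}(0,0,0)+\S_{N,\,M}(0,0,0,0,0)$. Concretely, I would start from the principal parametrix symbol $a_0=1/a$ and then generate a sequence of correction terms $a_j$ of increasing order of vanishing, assemble them into a single symbol via the asymptotic expansion result (Lemma~\ref{Lemma:AsymExp}), and finally verify that this symbol produces a two-sided parametrix modulo $L^\infty_\text{loc}\big((0,T];\,\Sy^{-\infty}\big)$.

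First I would check that $a_0=1/a$ still lies in $\T_{2N,\,M}(0,0,0)+\S_{N,\,M}(0,0,0,0,0)$. The invertibility hypothesis supplies a uniform lower bound $|a|\ge c>0$, and an induction on $|\alpha|+|\beta|$ using the Faà di Bruno formula reduces the derivatives $D_x^\beta\partial_\xi^\alpha(1/a)$ to products of derivatives of $a$ divided by powers of $a$. Each derivative of $a$ satisfies the required symbolic estimate in the respective zone by hypothesis, and the division by $a$ keeps the order at $(0,0,0)$ in $\Zpd(2N,M)$ and at $(0,0,0,0,0)$ in $\Zhyp(N,M)$. So $a_0$ is a legitimate zeroth-order parametrix symbol.

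Next, Lemma~\ref{Lemma:CompPseu} applied to $A\circ A_0$ gives the asymptotic expansion $a\#a_0\sim\sum_\alpha\frac{1}{\alpha!}\partial_\xi^\alpha a\,D_x^\alpha a_0$. The term $\alpha=0$ yields $aa_0=1$, while every term with $|\alpha|\ge 1$ carries at least one $\xi$-derivative, hence lowers the $\T$-order and the $\S$-order in the first slot by one, so that $r_0:=a\#a_0-1$ belongs to $\T_{2N,\,M}(0,0,-1)+\S_{N,\,M}(-1,0,0,0,0)$ plus the cross terms produced by Lemma~\ref{Lemma:SymbolProd}; those cross terms are supported in the overlap strip $N(w(\Lambda(t)))^m<\jxi<2N(w(\Lambda(t)))^m$, where $\varrho\sim\lambda(t)\jxi$ and $\partial_t\varrho/\varrho\lesssim\lambda(t)/\Lambda(t)$, so they can also be absorbed into the same lowered symbol class. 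Iteratively I set $a_{j}=-a_0\#r_{j-1}$ and $r_j=a\#\big(\sum_{k=0}^{j}a_k\big)-1$; by construction each $a_j$ lies in $\T_{2N,\,M}(0,0,-j)+\S_{N,\,M}(-j,0,0,0,0)$ and each $r_j$ is of the corresponding lower order.

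Invoking Lemma~\ref{Lemma:AsymExp} on the sequence $\{a_j\}$ produces a symbol $a^\#\in \T_{2N,\,M}(0,0,0)+\S_{N,\,M}(0,0,0,0,0)$ with $a^\#\sim\sum_{j\ge 0}a_j$. Then $a\#a^\#-1$ is, for every $j_0$, equivalent modulo $\T_{2N,\,M}(0,0,-j_0)+\S_{N,\,M}(-j_0,0,0,0,0)$ to $r_{j_0-1}$, which is of the same lowered order. Passing $j_0\to\infty$ and using Proposition~\ref{Prop:StoS} and Proposition~\ref{Prop:TtoS} places the remainder in $L^\infty_\text{loc}\big((0,T];\,\Sy^{-\infty}\big)$. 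A symmetric argument starting from $a^\#\#a$ gives the left parametrix property, and the standard uniqueness argument shows the two coincide modulo regularizing operators.

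The main obstacle I anticipate is the bookkeeping of the cross terms in Lemma~\ref{Lemma:SymbolProd}: whenever a symbol in $\T$ is composed with a symbol in $\S$, two extra $\S$-pieces appear whose orders involve $m_1+m_3$ and $\widetilde m_1+\widetilde m_3$. One has to keep checking at each step of the iteration that these cross contributions are still of strictly lower order in $j$, which relies crucially on the fact that the overlap region of $\Zpd(2N,M)$ and $\Zhyp(N,M)$ is narrow and that $\varrho\sim\lambda(t)\jxi$ there. Verifying this book-keeping cleanly for all $j$, rather than just $j=1$, is the only nontrivial point of the argument.
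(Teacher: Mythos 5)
Your argument is correct and follows essentially the same approach as the paper: invert the symbol to get the leading term, build lower-order corrections, sum them with Lemma~\ref{Lemma:AsymExp}, and conclude via Propositions~\ref{Prop:StoS} and \ref{Prop:TtoS} that the remainder is regularizing. The only cosmetic difference is that the paper defines the corrections $a_j^\#$ by the explicit term-by-term recursion $\sum_{1\le|\alpha|\le j}\frac{1}{\alpha!}\partial_\xi^\alpha a\,D_x^\alpha a_{j-|\alpha|}^\# = -a\,a_j^\#$, whereas you use the equivalent Neumann-type iteration $a_j=-a_0\#r_{j-1}$; your added check that $1/a$ stays in $\T_{2N,M}(0,0,0)+\S_{N,M}(0,0,0,0,0)$ via Fa\`a di Bruno, and the remark that the cross terms from Lemma~\ref{Lemma:SymbolProd} are harmless here (since all parameters except $m_3$ and $l_1$ vanish), are both sound and merely make explicit what the paper leaves implicit.
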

	\begin{proof}
		We use the existence of the inverse of $a$ and set
		\begin{equation*}
			a_0^\#(t,\,x,\,\xi) = a(t,\,x,\,\xi)^{-1} \in \T_{2N,\,M}(0,\,0,\,0) + \S_{N,\,M}(0,\,0,\,0,\,0,\,0).
		\end{equation*}
		In view of Proposition~\ref{Prop:StoS} and Proposition~\ref{Prop:TtoS}, we are able to define a sequence $a_j^\#(t,\,x,\,\xi)$ recursively by
		\begin{equation*}
			\sum\limits_{1 \leq |\alpha| \leq j} \frac{1}{\alpha!} \partial_\xi^\alpha a(t,\,x,\,\xi) D_x^\alpha a_{j-|\alpha|}^\#(t,\,x,\,\xi) = - a(t,\,x,\,\xi) a_j^\#(t,\,x,\,\xi),
		\end{equation*}
		with
		\begin{equation*}
			a_j^\# \in \T_{2N,\,M}(0,\,0,\,-j) + \S_{N,\,M}(-j,\,0,\,0,\,0,\,0).
		\end{equation*}
		Lemma~\ref{Lemma:AsymExp} then yields the existence of a symbol
		\begin{equation*}
			a_R^\# \in \T_{2N,\,M}(0,\,0,\,0) + \S_{N,\,M}(0,\,0,\,0,\,0,\,0),
		\end{equation*}
		and a right parametrix $A_R^\#(t,\,x,\,\xi)$ with symbol $\sigma(A_R^\#) = a_R^\#$. We have
		\begin{equation*}
		A A_R^\# -I \in L^\infty\big([0,\,T];\OPS^{-\infty}\big).
		\end{equation*}
		
		The existence of a left parametrix can be shown in the same way. One can also prove that right and left parametrix coincide which yields the existence of a parametrix $A^\#$.
	\end{proof}

	Consider a pseudodifferential operator $a \in \OPS^m$ and a non-negative, increasing function $\psi\in C^\infty(\R^n)$. Throughout this paper we refer to the transformation
	\begin{equation*}
	a_{\psi}(x,\,D_x) = e^{\lambda \psi(\jbl D_x \jbr)} \circ a(x,\,D_x) \circ e^{-\lambda \psi(\jbl D_x \jbr)},
	\end{equation*}
	as conjugation, where $\lambda$ is a positive constant.
	
	\begin{Proposition}[\cite{Cicognani.2017}]\label{APP:PSEUDO:CALC:CON}
		Let $a \in \OPS^m$ and let $\psi \in C^\infty(\R^n)$ be a non-negative, increasing function satisfying
		\begin{equation}\label{APP:PSEUDO:CALC:CON:EstEta}
		\bigg|\frac{\rmd^k}{\rmd s^k}\psi (s) \bigg| \leq C_k s^{-k} \psi(s),\qquad k \in \N, \, s \in \R_+.
		\end{equation}
		We fix a constant $\lambda > 0$. Then the symbol $a_{\psi}(x,\,\xi) = \sigma(a_{\psi}(x,\,D_x))$ of
		\begin{equation*}
		a_{\psi}(x,\,D_x) = e^{\lambda \psi(\jbl D_x \jbr)} \circ a(x,\,D_x) \circ e^{-\lambda \psi(\jbl D_x \jbr)},
		\end{equation*}
		satisfies
		\begin{equation}\label{APP:PSEUDO:CALC:CON:eq1}
		a_{\psi}(x,\,\xi) = a(x,\,\xi) + \sum\limits_{0 < |\gamma| < N} a_{(\gamma)}(x,\,\xi) \chi_\gamma(\xi) + r_N(x,\,\xi),
		\end{equation}
		where
		\begin{equation}\label{APP:PSEUDO:CALC:CON:eq2}
		\chi_\gamma(\zeta) = \frac{1}{\gamma!} e^{-\lambda \psi(\jxi)} \partial_\nu^\gamma\big(e^{\lambda \psi(\jbl\nu\jbr)}\big)\Big|_{\nu = \zeta},
		\end{equation}
		and
		\begin{equation}\label{APP:PSEUDO:CALC:CON:eq3}
		\begin{aligned}
		r_N(x,\,\xi) = \frac{N}{(2\pi)^n} \sum\limits_{|\gamma| = N} \bigg[&\Osii \int\limits_0^1 (1-\vartheta)^{N-1} e^{-\I y \zeta}\\
		&\times a_{(\gamma)}(x + \vartheta y,\,\xi) \chi_\gamma(\xi + \zeta) \rmd \vartheta \rmd y \rmd\zeta\bigg].
		\end{aligned}
		\end{equation}
		Furthermore, we have the estimate
		\begin{equation}\label{APP:PSEUDO:CALC:CON:est1}
		|\partial_\xi^\alpha \chi_\gamma(\xi)| \leq C_{\alpha,\,\gamma} \jxi^{- |\alpha| - |\gamma|} (\psi(\jxi))^{|\gamma|},
		\end{equation}
		for $\xi \in \R^n$ and $\alpha \in \N^n$.
	\end{Proposition}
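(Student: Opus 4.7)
The plan is to derive the symbol of the conjugation directly, using the composition rule for pseudodifferential operators, and then to extract the stated asymptotic expansion via a Taylor expansion in the $x$-variable.

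First I would observe that the symbols of $e^{\pm \lambda \psi(\jbl D_x\jbr)}$ depend only on $\xi$. Applying the standard composition rule for pseudodifferential operators twice, one obtains the amplitude representation
\begin{equation*}
a_\psi(x,\,\xi) \;=\; \Osii e^{-\I y\cdot\zeta}\, a(x+y,\,\xi)\, e^{\lambda[\psi(\jbl\xi+\zeta\jbr) - \psi(\jxi)]}\, dy\, \db \zeta.
\end{equation*}
The derivative bound \eqref{APP:PSEUDO:CALC:CON:EstEta} on $\psi$, combined with the mean value theorem, shows that $e^{\lambda[\psi(\jbl\xi+\zeta\jbr) - \psi(\jxi)]}$ and its $\zeta$-derivatives grow at most polynomially in $\jbl\zeta\jbr$ (uniformly in $\xi$ on compacts), so the oscillatory integral is well defined for any $a \in \Sy^m_{\rho,\,\delta}$.

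Next I would Taylor-expand $a(x+y,\,\xi)$ in $y$ around $y=0$ up to order $N$ with integral remainder and substitute into the formula above. For each polynomial term $\tfrac{y^\gamma}{\gamma!}\partial_x^\gamma a(x,\,\xi)$, I would use the identity $y^\gamma e^{-\I y\cdot\zeta} = (\I \partial_\zeta)^\gamma e^{-\I y\cdot\zeta}$ and integrate by parts in $\zeta$, which puts the derivative onto the exponential factor. Since $\partial_x^\gamma a(x,\,\xi)$ does not depend on $y$, the remaining $y$-integration yields $(2\pi)^n\delta(\zeta)$, collapsing $\zeta=0$. Using $D_x^\gamma = (-\I)^{|\gamma|}\partial_x^\gamma$ and the definition \eqref{APP:PSEUDO:CALC:CON:eq2}, the summand is identified as $a_{(\gamma)}(x,\,\xi)\chi_\gamma(\xi)$, proving \eqref{APP:PSEUDO:CALC:CON:eq1}. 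The very same integration by parts applied to the integral remainder (where now $\partial_x^\gamma a(x+\vartheta y,\,\xi)$ still depends on $y$, so the $y$-integration does not collapse) produces the explicit remainder formula \eqref{APP:PSEUDO:CALC:CON:eq3}.

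For the estimate \eqref{APP:PSEUDO:CALC:CON:est1} I would apply the Faà di Bruno formula to $\partial_\xi^\alpha \chi_\gamma(\xi)$. Up to the exponential prefactor which cancels against $e^{-\lambda \psi(\jxi)}$, $\chi_\gamma(\xi)$ is a finite sum of products of derivatives of $\lambda \psi(\jxi)$; differentiating $|\alpha|$ times in $\xi$ produces further derivatives of the same type. By the chain rule applied to $\psi(\jxi)$ together with the hypothesis \eqref{APP:PSEUDO:CALC:CON:EstEta}, each derivative of order $k$ contributes a factor $\jxi^{-k}\psi(\jxi)$. Tracking the total order of differentiation ($|\alpha|+|\gamma|$) and the total number of $\psi$-factors (at most $|\gamma|$, with lower powers of $\psi$ absorbed into the dominant term) gives the claimed bound $\jxi^{-|\alpha|-|\gamma|}(\psi(\jxi))^{|\gamma|}$.

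The main obstacle I expect is the combinatorial bookkeeping in the Faà di Bruno step: one must show that all the lower-order terms in $\psi$ generated by the multinomial expansion are indeed controlled by $(\psi(\jxi))^{|\gamma|}$, and also justify rigorously that all the integration by parts in $\zeta$ performed above is permissible as an operation on oscillatory integrals. Both issues are handled by the polynomial growth of the $\zeta$-derivatives of $e^{\lambda[\psi(\jbl\xi+\zeta\jbr)-\psi(\jxi)]}$, but the details require some care.
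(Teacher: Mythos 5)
The paper does not prove this Proposition; it is stated as a cited result from \cite{Cicognani.2017}, so there is no internal proof to compare against. Your overall strategy — write the conjugation as an amplitude $\mathrm{Os}$-$\iint e^{-\I y\cdot\zeta}a(x+y,\,\xi)e^{\lambda[\psi(\jbl\xi+\zeta\jbr)-\psi(\jxi)]}\,\rmd y\,\db\zeta$, Taylor-expand $a(x+y,\,\xi)$ in $y$, integrate by parts in $\zeta$ to convert each $y^\gamma$ into a $\zeta$-derivative acting on the exponential factor, and use the cancellation of $e^{\pm\lambda\psi(\jxi)}$ together with Fa\`a di Bruno for the $\chi_\gamma$ estimate — is the natural one, and the algebra (tracking $\I^{|\gamma|}$ versus $(-\I)^{|\gamma|}$, identifying $a_{(\gamma)}$ and $\chi_\gamma$) checks out, as does the observation that the exponential prefactor cancels so that $\chi_\gamma$ is a pure sum of products of derivatives of $\lambda\psi(\jxi)$, whence differentiating $|\alpha|$ more times lowers the homogeneity in $\jxi$ without increasing the number of $\psi$-factors.

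There is, however, a genuine gap in the justification that the oscillatory integral exists, which is the one nontrivial analytic point. You claim that $e^{\lambda[\psi(\jbl\xi+\zeta\jbr)-\psi(\jxi)]}$ and its $\zeta$-derivatives grow \emph{at most polynomially} in $\jbl\zeta\jbr$, uniformly for $\xi$ on compacts. This is false for exactly the weight functions of interest in this paper. Sub-additivity \eqref{APP:PSEUDO:CALC:CON:EstR:EtaAdditive} gives only $\psi(\jbl\xi+\zeta\jbr)-\psi(\jxi)\leq\psi(\jbl\zeta\jbr)$, i.e. the factor is bounded by $e^{\lambda\psi(\jbl\zeta\jbr)}$; for $\psi(\jxi)=\jxi^{1/s}$ and $\xi=0$ one has literally $e^{\lambda[\psi(\jbl\zeta\jbr)-\psi(1)]}\sim e^{\lambda|\zeta|^{1/s}}$, which is super-polynomial. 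The mean-value argument via \eqref{APP:PSEUDO:CALC:CON:EstEta} gives $\psi'(\tau)\leq C\tau^{-1}\psi(\tau)$, hence $|\psi(\jbl\xi+\zeta\jbr)-\psi(\jxi)|\lesssim|\zeta|$, which again yields only $e^{C\lambda|\zeta|}$. Consequently the amplitude is not in a standard Kumano-go class and the regularization $\lim_{\ve\to 0}\iint\chi(\ve y,\,\ve\zeta)\cdots$ does not converge by the textbook argument. Making the oscillatory integral (and hence the exact identity \eqref{APP:PSEUDO:CALC:CON:eq1}) rigorous requires either the machinery of infinite-order pseudodifferential operators on Gevrey--Sobolev-type scales, or a splitting of the $\zeta$-integration into $|\zeta|\lesssim|\xi|$ (where the exponential is genuinely bounded) and $|\zeta|\gtrsim|\xi|$ (where one needs the decay in $x$-derivatives of $a$ encoded in the weight sequence $\{K_p\}_p$) — that is, precisely the additional hypotheses on $a$ and $\psi$ that appear only in Proposition~\ref{APP:PSEUDO:CALC:CON:EstR}. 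You flag that ``the details require some care,'' but the premise on which you propose to rest that care (polynomial growth) is the part that fails, so the well-definedness step needs to be replaced, not merely elaborated.
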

	\begin{Remark}
		By estimate \eqref{APP:PSEUDO:CALC:CON:est1} we are immediately able to conclude that $\chi_\gamma \in S^0$ for all $|\gamma| > 0$, if we assume $\psi(\jxi) = o(\jxi)$.
	\end{Remark}
	
	Proposition~\ref{APP:PSEUDO:CALC:CON} does not provide an estimate for $r_N(x,\,\xi)$. In order to derive such an estimate we pose additional assumptions on the operator $a$ and the function $\psi$.
	
	\begin{Proposition}[\cite{Cicognani.2017}]\label{APP:PSEUDO:CALC:CON:EstR}
		Take $a \in \OPS^m$ and $\psi \in C^\infty$ as in Proposition~\ref{APP:PSEUDO:CALC:CON}. Assume additionally that the symbol $a=a(x,\,\xi) \in \Sy^m$ is such that
		\begin{equation}\label{APP:PSEUDO:CALC:CON:EstR:EstA}
		\big|D_x^\beta \partial_\xi^\alpha a(x,\,\xi)\big| \leq C_\alpha K_{|\beta|} \jxi^{m-|\alpha|},
		\end{equation}
		for all $x,\,\xi \in \R^n$. Here $\{K_{|\beta|}\}_{|\beta|}$ is a weight sequence such that
		\begin{equation}\label{APP:PSEUDO:CALC:CON:EstR:EstK}
		\inf\limits_{p\in \N}\frac{K_{p}}{\jxi^{p}} \leq C e^{- \delta_0 \psi(\jxi)},
		\end{equation}
		for some $\delta_0 > 0$. Furthermore, we suppose that the relation
		\begin{equation}\label{APP:PSEUDO:CALC:CON:EstR:EtaAdditive}
		\psi(\jbl \xi + \zeta \jbr) \leq \psi(\jxi) + \psi(\jbl \zeta \jbr),
		\end{equation}
		holds for all large  $|\xi|,\,|\zeta|,\,\xi,\,\zeta \in \R^n$.
		We assume that the constant $\lambda > 0$ is such that there exists another positive constant $c_0$ such that
		\begin{equation}\label{APP:PSEUDO:CALC:CON:EstR:Lambda}
		\delta_0 - \lambda = c_0 > 0.
		\end{equation}
		Then the remainder $r_N(x,\,\xi)$ given by \eqref{APP:PSEUDO:CALC:CON:eq3} satisfies the estimate
		\begin{equation}\label{APP:PSEUDO:CALC:CON:EstR:EstR}
		\big| D_x^\beta \partial_\xi^\alpha r_N(x,\,\xi)\big| \leq C_{\alpha,\,\beta,\,N}  \lambda^{N}  \jxi^{m-|\alpha| - N} \psi(\jxi)^{N},
		\end{equation}
		for $(x,\,\xi) \in \R^n\times\R^n$ and $\alpha,\beta \in \N^n$.
	\end{Proposition}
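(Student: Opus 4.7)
The plan is to estimate $r_N(x,\xi)$ directly from its oscillatory integral representation \eqref{APP:PSEUDO:CALC:CON:eq3}. First I would take $D_x^\beta \partial_\xi^\alpha$ under the integral and distribute via Leibniz: the $x$-derivative falls only on $a_{(\gamma)}(x+\vartheta y,\xi)$, while the $\xi$-derivative splits between $a_{(\gamma)}$ and $\chi_\gamma(\xi+\zeta)$. To make the oscillatory integral absolutely convergent, I would perform the standard regularization, integrating by parts in $y$ with $(1-\Delta_y)^M\langle\zeta\rangle^{-2M}$ to gain decay in $\zeta$, and in $\zeta$ with $(1-\Delta_\zeta)^L \langle y\rangle^{-2L}$ to gain decay in $y$. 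The $y$-derivatives produce further $x$-derivatives of $a_{(\gamma)}$, estimated via \eqref{APP:PSEUDO:CALC:CON:EstR:EstA} as $C K_{|\beta|+2M}\jxi^{m-|\alpha'|}$; the $\zeta$-derivatives act on $\chi_\gamma(\xi+\zeta)$ and are controlled by \eqref{APP:PSEUDO:CALC:CON:est1}.

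The crucial step is the treatment of $\chi_\gamma(\xi+\zeta)$. By \eqref{APP:PSEUDO:CALC:CON:est1} and $|\gamma|=N$, each factor carries $\psi(\jbl\xi+\zeta\jbr)^{N}$ times a polynomial decay in $\jbl\xi+\zeta\jbr$, with combinatorial constants of order $\lambda^N$ (coming from differentiating $e^{\lambda\psi(\jbl\nu\jbr)}$ exactly $N$ times). Applying subadditivity \eqref{APP:PSEUDO:CALC:CON:EstR:EtaAdditive} gives
\begin{equation*}
\psi(\jbl\xi+\zeta\jbr)^N \leq 2^N\bigl(\psi(\jxi)^N + \psi(\jbl\zeta\jbr)^N\bigr).
\end{equation*}
The term $\psi(\jxi)^N$ yields exactly the factor appearing in the claimed estimate \eqref{APP:PSEUDO:CALC:CON:EstR:EstR}, while the term $\psi(\jbl\zeta\jbr)^N$ must be absorbed by the $\zeta$-decay.

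This absorption is where the assumptions \eqref{APP:PSEUDO:CALC:CON:EstR:EstK} and \eqref{APP:PSEUDO:CALC:CON:EstR:Lambda} enter. For the piece of the integrand producing the bad factor $\psi(\jbl\zeta\jbr)^N$, I would invoke \eqref{APP:PSEUDO:CALC:CON:EstR:EstK} to choose, for each fixed $\zeta$, an optimal integer $p$ in the infimum defining $K_{|\beta|+2M}$ so that this constant is bounded by a suitable power of $\jbl\zeta\jbr$ times $e^{-\delta_0\psi(\jbl\zeta\jbr)}$. The elementary bound $\psi(\jbl\zeta\jbr)^N e^{-\delta_0\psi(\jbl\zeta\jbr)} \leq C_N e^{-(\delta_0-\varepsilon)\psi(\jbl\zeta\jbr)}$ for small $\varepsilon>0$, combined with $\delta_0-\lambda = c_0>0$, leaves us with an integrable exponential decay $e^{-c_0\psi(\jbl\zeta\jbr)}$ in $\zeta$. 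Choosing $M,L$ large enough then makes the $(y,\zeta)$-integral finite, uniformly in $(x,\xi)$, and the $\vartheta$-integral is trivially bounded by $1$.

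The main obstacle I anticipate is the careful bookkeeping: tracking the multi-index combinatorics of the two integrations by parts and the Leibniz rule, choosing the integer $p$ from \eqref{APP:PSEUDO:CALC:CON:EstR:EstK} as a function of $\jbl\zeta\jbr$ uniformly in all other parameters, and verifying that the constants do not spoil the clean dependence $\lambda^N \psi(\jxi)^N$ on $N$. In particular, the polynomial powers of $\jbl\xi+\zeta\jbr$ coming from \eqref{APP:PSEUDO:CALC:CON:est1} must be split by Peetre-type inequalities to separate $\jxi^{m-|\alpha|-N}$ from a $\jbl\zeta\jbr$-factor that is then absorbed by the exponential decay $e^{-c_0\psi(\jbl\zeta\jbr)}$, ensuring the final estimate \eqref{APP:PSEUDO:CALC:CON:EstR:EstR} holds with the stated structure.
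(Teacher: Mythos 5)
The paper does not prove this proposition itself; it is quoted verbatim from \cite{Cicognani.2017}, so there is no in-house argument to compare your proposal against. Judged on its own merits, your plan has the right scaffolding (regularize the oscillatory integral, integrate by parts in $y$ and $\zeta$, optimize the number of $y$-integrations by parts against the weight sequence $\{K_p\}_p$ to get exponential decay in $\zeta$), but the key estimate on $\chi_\gamma(\xi+\zeta)$ is done incorrectly, and as a result the assumption \eqref{APP:PSEUDO:CALC:CON:EstR:Lambda} never actually does any work.

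The issue is that you read \eqref{APP:PSEUDO:CALC:CON:est1} as a bound for $\chi_\gamma$ as a function of a single variable and then substituted $\xi+\zeta$, obtaining $|\chi_\gamma(\xi+\zeta)|\lesssim \jbl\xi+\zeta\jbr^{-N}\psi(\jbl\xi+\zeta\jbr)^N$ and hence only a polynomial growth $\psi(\jbl\zeta\jbr)^N$ in $\zeta$ after splitting by subadditivity. But by \eqref{APP:PSEUDO:CALC:CON:eq2}, $\chi_\gamma$ carries the factor $e^{-\lambda\psi(\jxi)}$ with the \emph{base} frequency $\xi$, while the differentiated exponential is evaluated at $\nu=\xi+\zeta$, producing $e^{\lambda\psi(\jbl\xi+\zeta\jbr)}$. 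The bound \eqref{APP:PSEUDO:CALC:CON:est1} only holds on the diagonal $\zeta=0$, where these two exponentials cancel. Off the diagonal the estimate for $\chi_\gamma(\xi+\zeta)$ necessarily retains the factor
\begin{equation*}
e^{-\lambda\psi(\jxi)}\,e^{\lambda\psi(\jbl\xi+\zeta\jbr)}\ \leq\ e^{\lambda\psi(\jbl\zeta\jbr)}
\end{equation*}
by \eqref{APP:PSEUDO:CALC:CON:EstR:EtaAdditive}, and this exponential growth in $\zeta$ is exactly what must be absorbed. Once you produce, via integration by parts in $y$ and the optimal choice of the order of integration by parts against \eqref{APP:PSEUDO:CALC:CON:EstR:EstK}, a factor $e^{-\delta_0\psi(\jbl\zeta\jbr)}$, the combined $\zeta$-dependence is $e^{(\lambda-\delta_0)\psi(\jbl\zeta\jbr)}=e^{-c_0\psi(\jbl\zeta\jbr)}$, integrable precisely because $c_0=\delta_0-\lambda>0$. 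Your version short-circuits this: you absorb the polynomial $\psi(\jbl\zeta\jbr)^N$ into $e^{-\delta_0\psi(\jbl\zeta\jbr)}$ (which only requires $\delta_0>0$), and then assert that this ``combined with $\delta_0-\lambda=c_0>0$ leaves $e^{-c_0\psi(\jbl\zeta\jbr)}$'' --- a non-sequitur, since $\lambda$ never entered your chain of inequalities. Relatedly, the phrase ``the infimum defining $K_{|\beta|+2M}$'' conflates a fixed constant of the weight sequence with the infimum over $p$ in \eqref{APP:PSEUDO:CALC:CON:EstR:EstK}; what must be optimized is the number of $y$-integrations by parts (hence the index $p$ reached along the sequence $K_p$) as a function of $\jbl\zeta\jbr$, and the estimate on $a_{(\gamma)}$ already starts at $K_{N+|\beta|}$ rather than $K_{|\beta|}$. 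Tightening these two points --- the off-diagonal exponential in $\chi_\gamma$ and the bookkeeping of the $K_p$-optimization --- would make the argument sound.
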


	\begin{Remark}\label{APP:PSEUDO:CALC:CON:RemarkToCon}
		If we take another weight function $\widetilde \psi$ satisfying \eqref{APP:PSEUDO:CALC:CON:EstEta} and \eqref{APP:PSEUDO:CALC:CON:EstR:EtaAdditive} with $\widetilde \psi(\jxi) = o(\psi(\jxi))$, it is clear that relation \eqref{APP:PSEUDO:CALC:CON:EstR:EstK} is also satisfied. In that case, we obtain estimates \eqref{APP:PSEUDO:CALC:CON:est1} and \eqref{APP:PSEUDO:CALC:CON:EstR:EstR} for $\widetilde \psi$ without assuming \eqref{APP:PSEUDO:CALC:CON:EstR:Lambda}.
	\end{Remark}
	
	\section{Proof}\label{PROOF}
	
	The proof of Theorem~\ref{CP2:Theorem} is organized in four steps. We begin by regularizing the coefficients of the principal part and then proceed by reducing the original Cauchy problem to a Cauchy problem for a system of first order (with respect to $D_t$). It is during this reduction process, that we make use of the introduced symbol classes and zones to obtain appropriate energies in the respective parts of the extended phase space. After the reduction step, we perform a change of variables to deal with the lower order terms. Lastly, using sharp G{\aa}rding's inequality yields $L^2$-well-posedness of an related auxiliary Cauchy problem, which gives well-posedness of the original problem in the weighted spaces $H^\nu_{\eta,\,\delta}$.
	
	\subsection{Regularization}
	Since the coefficients of the principal part are just $\mu$-continuous, it is helpful to work with regularized coefficients in the hyperbolic zone.
	\begin{Definition}\label{CP2:PROOF:Def:RegCoeff}
		Let $\psi \in \Czi(\R)$ be a given function satisfying $\int_\R \psi(x) \rmd x = 1$ and $\psi(x)~\geq~0$ for any $x \in \R$  with $\supp \psi \subset [{-1},1]$. Let $\ve > 0$ and set $\psi_\ve(x) = \frac{1}{\ve} \psi\left(\frac{x}{\ve}\right)$.
		Then we define
		\begin{equation*}
			a_{\ve,\,m-j,\,\gamma}(t,\,x) := (a_{m-j,\,\gamma}\ast_t \psi_\ve)(t,\,x),
		\end{equation*}
		for $j = 0,\,\ldots,\,m-1$ and $|\gamma| + j = m$.
	\end{Definition}
	The following properties of $a_{\ve,\,m-j,\,\gamma}$ can be verified by straightforward computations.
	\begin{Proposition}[\cite{Cicognani.2017}]\label{CP2:PROOF:Prop:RegCoeff}
		The inequalities
		\begin{enumerate}[label = (\roman*),align = left, leftmargin=*]
			\item $\big|\partial_t a_{\ve,\,m-j,\,\gamma}(t,\,x)\big| \lesssim \ve^{-1} \mu(\ve)$ and
			\item $\big|D_x^\beta(a_{m-j,\,\gamma}(t,\,x) - a_{\ve,\,m-j,\,\gamma}(t,\,x))\big| \lesssim K_{|\beta|}\mu(\ve)$, for all $\beta \in \N^n$,
		\end{enumerate}
		are satisfied in $\Zhyp(N,\,M)$.
	\end{Proposition}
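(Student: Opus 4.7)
The plan is to derive both estimates directly from the definition $a_{\ve,\,m-j,\,\gamma} = a_{m-j,\,\gamma} \ast_t \psi_\ve$ using the $\mu$-continuity hypothesis \ref{CP2:CoeffPrinc} on the coefficients and the standard properties of the mollifier $\psi_\ve$. Throughout, I will use that $\supp \psi_\ve \subset [-\ve,\ve]$, that $\int \psi_\ve = 1$, that $\mu$ is increasing so $\mu(|t-s|) \leq \mu(\ve)$ on $\supp \psi_\ve(t-\cdot)$, and the scaling $|\partial_t \psi_\ve(r)| \leq C\ve^{-2}$, $|\psi_\ve(r)| \leq C\ve^{-1}$. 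Note that the estimates are pointwise in $(t,x)$ so the restriction to $\Zhyp(N,M)$ plays no role in the derivation itself; it only indicates where these bounds will be invoked in the subsequent energy argument.

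For part (ii), I would simply differentiate under the integral sign, apply the mean-zero condition $\int \psi_\ve = 1$ to rewrite
\begin{equation*}
D_x^\beta a_{\ve,\,m-j,\,\gamma}(t,x) - D_x^\beta a_{m-j,\,\gamma}(t,x) = \int \bigl(D_x^\beta a_{m-j,\,\gamma}(s,x) - D_x^\beta a_{m-j,\,\gamma}(t,x)\bigr)\psi_\ve(t-s)\,\rmd s,
\end{equation*}
and then insert the space-uniform $\mu$-continuity bound from \ref{CP2:CoeffPrinc}, namely $|D_x^\beta a_{m-j,\,\gamma}(s,x) - D_x^\beta a_{m-j,\,\gamma}(t,x)| \leq C K_{|\beta|}\mu(|t-s|) \leq C K_{|\beta|}\mu(\ve)$. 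Combined with $\int|\psi_\ve| = 1$, this gives the claimed bound $\lesssim K_{|\beta|}\mu(\ve)$.

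For part (i), the main trick is to exploit $\int \partial_t \psi_\ve(t-s)\,\rmd s = 0$ so that, after moving the $t$-derivative onto the mollifier (which is legitimate since $\psi_\ve \in \Czi$), we may subtract an arbitrary $t$-dependent constant:
\begin{equation*}
\partial_t a_{\ve,\,m-j,\,\gamma}(t,x) = \int \bigl(a_{m-j,\,\gamma}(s,x) - a_{m-j,\,\gamma}(t,x)\bigr)\partial_t \psi_\ve(t-s)\,\rmd s.
\end{equation*}
Using $\mu$-continuity in time on the integrand, the pointwise bound $|\partial_t\psi_\ve| \leq C\ve^{-2}$, and the fact that the support of $\partial_t\psi_\ve(t-\cdot)$ has length $2\ve$, we obtain $|\partial_t a_{\ve,\,m-j,\,\gamma}(t,x)| \leq C\mu(\ve)\cdot\ve^{-2}\cdot 2\ve = C\ve^{-1}\mu(\ve)$, as desired.

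There is no genuine obstacle here: both estimates are entirely routine consequences of mollifier calculus once one notices the mean-zero cancellation for (i). The only mild point of care is that $a_{m-j,\,\gamma}$ is not assumed $C^1$ in $t$, so one must place the $\partial_t$ on $\psi_\ve$ rather than on $a_{m-j,\,\gamma}$ before invoking the modulus-of-continuity estimate.
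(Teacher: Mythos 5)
Your argument is correct and is exactly the standard mollifier computation that the paper alludes to (it gives no proof of its own, merely stating that the properties are ``verified by straightforward computations'' and citing \cite{Cicognani.2017}): insert the mean-one identity $\int\psi_\ve=1$ for (ii) and the mean-zero identity $\int\partial_t\psi_\ve=0$ for (i), then use that $\mu$ is increasing, $\supp\psi_\ve\subset[-\ve,\ve]$, and the scalings $|\psi_\ve|\lesssim\ve^{-1}$, $|\partial_t\psi_\ve|\lesssim\ve^{-2}$. The only detail worth flagging is that the $t$-convolution implicitly requires extending $a_{m-j,\gamma}$ outside $[0,T]$ (e.g.\ by constants) so that $\mu$-continuity is preserved near the endpoints, but this is a routine convention.
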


	\begin{Remark}
		Later on in the proof, we choose $\ve = \jxi^{-1}$ which yields the estimates
		\begin{enumerate}[label = (\roman*),align = left, leftmargin=*]
			\item $\big|\partial_t a_{\jxi^{-1},\,m-j,\,\gamma}(t,\,x)\big| \lesssim \jxi \mu(\jxi^{-1}) = C \vp(\jxi)$ and
			\item $\big|D_x^\beta\partial_\xi^\alpha \big(a_{m-j,\,\gamma}(t,\,x) - a_{\jxi^{-1},\,m-j,\,\gamma}(t,\,x)\big)\big| \lesssim K_{|\beta|} \jxi^{-|\alpha|} \mu(\jxi^{-1}) = CK_{|\beta|} \jxi^{-|\alpha|} \frac{\vp(\jxi)}{\jxi}$,
		\end{enumerate}
		for all $(t,\,x,\,\xi) \in \Zhyp(N,\,M)$.
	\end{Remark}
	
	\subsection{Reduction to a first order diagonal system}
	
	We recall that $\varrho = \varrho(t,\,\xi)$ is the positive solution to
	\begin{equation*}
	\varrho^m = 1+ \jxi \lambda(t)^m \big(w(\Lambda(t))\big)^{m(m-1)}.
	\end{equation*}
	We define the symbol
	\begin{equation*}
		h(t,\,\xi) = \varrho(t,\,\xi) \chixi + \jxi \lambda(t) \Big(1-\chixi\Big),
	\end{equation*}
	where $\chi \in C^\infty$ is a smooth cut-off function with $\chi(s) = 1$ if $|s| \leq 1$ and $\chi(s) = 0$ for $|s| \geq 2$.
	We observe that
	\begin{align*}
	\varrho(t,\,\xi) &\in \T_{N,\,M}(1,\,0,\,0)\\ &\text{ for } (t,\,x,\,\xi) \in \Zpd(N,\,M),\\
	\jxi \lambda(t) &\in \S_{N,\,M}(1,\,1,\,0,\,0,\,0)\\ &\text{ for } (t,\,x,\,\xi) \in \Zhyp(N,\,M),\\
	\varrho(t,\,\xi) \chixi  &\in \T_{2N,\,M}(1,\,0,\,0)\\ & \text{ for } (t,\,x,\,\xi) \in [0,\,T]\times\R^n\times\R^n,\\
	\jxi \lambda(t)\Big(1-\chixi\Big) &\in \S_{N,\,M}(1,\,1,\,0,\,0,\,0)\\ & \text{ for } (t,\,x,\,\xi) \in [0,\,T]\times\R^n\times\R^n,
	\end{align*}
	and thus
	\begin{equation*}
	h(t,\,\xi) \in  \T_{2N,\,M}(1,\,0,\,0)  +  \S_{N,\,M}(1,\,1,\,0,\,0,\,0).
	\end{equation*}
	Using Lemma~\ref{Lemma:rho} and Definition~\ref{SF:DEF}, we obtain that
	\begin{equation*}
	D_t h(t,\,\xi) \in \T_{2N,\,M}(1,\,1,\,0) + \S_{N,\,M}(1,\,1,\,1,\,0,\,0).
	\end{equation*}
	Next, we define the matrix pseudodifferential operator $H(t,\,D_x)$ with symbol
	\begin{equation*}
	H(t,\,\xi) = \sigma(H(t,\,D_x)) = \begin{pmatrix}
	(h(t,\,\xi))^{m-1} & & & \\[1em]
	& (h(t,\,\xi))^{m-2}& & \\[1em]
	& & \ddots&\\
	 & & & 1
	\end{pmatrix}.
	\end{equation*}
	
	\begin{Proposition}
		The inverse operator $H^{-1}(t,\,D_x)$ of $H(t,\,D_x)$ exists and its symbol is given by
		\begin{equation*}
		\sigma(H^{-1}) = (H(t,\,\xi))^{-1} =  \begin{pmatrix}
		(h(t,\,\xi))^{-(m-1)} & & & \\[1em]
		& (h(t,\,\xi))^{-(m-2)}& & \\[1em]
		& & \ddots&\\
		& & & 1
		\end{pmatrix}.
		\end{equation*}
	\end{Proposition}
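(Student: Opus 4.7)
The plan is to exploit the two structural simplifications that make this much easier than a generic matrix parametrix construction. First, $H(t,\xi)$ is a \emph{diagonal} matrix symbol, so inversion reduces to inverting each diagonal entry $h(t,\xi)^{m-k}$ separately. Second, and crucially, $h(t,\xi)$ depends only on $t$ and $\xi$, not on $x$; hence $h(t,D_x)$ and its (proposed) inverse are Fourier multipliers, and their composition is \emph{exactly} (not merely modulo smoothing) the Fourier multiplier of the pointwise product. So once I exhibit the reciprocal as a legitimate symbol, the identity $h(t,D_x)^{m-k}\circ h(t,D_x)^{-(m-k)}=I$ follows without invoking the full parametrix machinery.

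The first step is to check a positive lower bound $h(t,\xi)\geq c_0>0$ uniformly on $[0,T]\times\R^n$, so that $1/h(t,\xi)$ is pointwise well-defined and smooth. In the pseudodifferential zone, $h=\varrho$ and Lemma~\ref{Lemma:rho} together with $\varrho^m=1+\jxi\lambda^m(w(\Lambda))^{m(m-1)}$ forces $\varrho\geq 1$. In the hyperbolic zone, $h=\jxi\lambda(t)$ with $\jxi\geq N(w(\Lambda(t)))^m$, and assumption~\ref{CP2:WAtZero} together with the definition of $w$ gives $\jxi\lambda(t)\geq c_0>0$ on the zone (where $\jxi>M$). The cut-off $\chi$ smoothly interpolates between these two regimes in the overlap, where $\varrho\sim\jxi\lambda(t)$, so no singularity is introduced.

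Next I verify that the reciprocal lies in the right symbol class, namely that $h^{-1}(t,\xi)\in\T_{2N,M}(-1,0,0)+\S_{N,M}(-1,-1,0,0,0)$, by differentiating $1/h$ via the Fa\`a di Bruno/quotient rule and using the already-established bounds on $\partial_\xi^\alpha h$ in each zone together with the lower bound $h\geq c_0$. Iterating (or, equivalently, applying Proposition~\ref{Prop:SymbolProp}(iii) to $h^{-1}\cdot h^{-1}\cdots h^{-1}$) yields $h^{-(m-k)}\in\T_{2N,M}(-(m-k),0,0)+\S_{N,M}(-(m-k),-(m-k),0,0,0)$ for each $k=1,\ldots,m-1$, so each diagonal entry of the proposed inverse matrix is a well-defined symbol.

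Finally, because all entries are $x$-independent, the operator $h(t,D_x)^{m-k}$ acts on $\hat u(\xi)$ by multiplication with $h(t,\xi)^{m-k}$; its composition with the Fourier multiplier of symbol $h(t,\xi)^{-(m-k)}$ is multiplication by $1$. Hence the diagonal operator whose symbol is the stated matrix is a two-sided inverse of $H(t,D_x)$ on the nose, not just modulo smoothing. The only real obstacle is establishing the uniform positive lower bound on $h$ across the two zones and their overlap; once that is done, everything else is bookkeeping inside the symbol hierarchies of Proposition~\ref{Prop:SymbolProp}.
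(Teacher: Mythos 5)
Your argument is essentially the paper's: $H$ is $x$-independent, so inversion of the operator reduces to pointwise inversion of the diagonal symbol entries, and the only thing to check is a uniform positive lower bound on $h(t,\xi)$. One small misstep worth flagging: in the hyperbolic zone your lower bound on $\jxi\lambda(t)$ does not actually follow from assumption~\ref{CP2:WAtZero} — that condition bounds $\lambda(t)\big(w(\Lambda(t))\big)^{m-1}$ from \emph{above} as $t\to 0$, leaving $\lambda(t)\big(w(\Lambda(t))\big)^{m}$ in an indeterminate $0\cdot\infty$ form; the correct route is Proposition~\ref{CP2:Remark:EstLambdaW}(i), which gives $\big(w(\Lambda(t))\big)^m\geq C/\Lambda(t)$, so that $\jxi\lambda(t)\geq N\,C\,\lambda(t)/\Lambda(t)$, and $\lambda(t)/\Lambda(t)\to\infty$ as $t\to 0+$ by the shape-function conditions in Definition~\ref{SF:DEF}.
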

	\begin{proof}
		Since $H(t,\,D_x)$ is independent of $x$, it is clear that $\sigma(H^{-1}) = (\sigma(H))^{-1}$ if $(\sigma(H))^{-1}$ exists. For this, we note that $h(t,\,\xi) \geq 1$ for all $(t,\,\xi) \in [0,\,T]\times\R^n$.
	\end{proof}

	We set
	\begin{equation*}
		U = H(t,\,D_x) \big(u(t,\,x),\,D_t u(t,\,x),\,\ldots,\,D_t^{m-1} u(t,\,x)\big)^T.
	\end{equation*}
	Applying this transformation to the Cauchy problem \eqref{CP2:CP} yields
	\begin{equation}\label{CP2:PROOF:TransformedSystem}
	D_t U = (A+B) U,
	\end{equation}
	with initial conditions
	\begin{equation*}
		U(0,\,x) = H(0,\,D_x) \big(u(0,\,x),\,D_t u(0,\,x),\,\ldots,\,D_t^{m-1} u(0,\,x)\big)^T,
	\end{equation*}
	where
	\begin{equation*}
		\sigma(A) = \begin{pmatrix}
		 & h(t,\,\xi) &&\\
		 & & \ddots& \\
		 & & & h(t,\,\xi)\\
		 a_m(t,\,x,\,\xi) & a_{m-1}(t,\,x,\,\xi) & \ldots &a_{1}(t,\,x,\,\xi)
		\end{pmatrix},
	\end{equation*}
	and
	\begin{equation*}
		\sigma(B) = \begin{pmatrix}
		\frac{(m-1)D_t(h(t,\,\xi))}{h(t,\,\xi)} & & &\\
		& \ddots& & &\\
		& & \frac{D_t(h(t,\,\xi))}{h(t,\,\xi)}&\\
		b_m(t,\,x,\,\xi) & b_{m-1}(t,\,x,\,\xi) & \ldots & b_{1}(t,\,x,\,\xi)
		\end{pmatrix},
	\end{equation*}
	with
	\begin{align*}
		a_{m-k}(t,\,x,\,\xi) &=
		\begin{aligned}[t] &\chixi \sum\limits_{|\gamma| = m-k} \frac{\lambda(t)^{m-k} a_{m-k,\,\gamma}(t,\,x) \xi^\gamma}{ h(t,\,\xi)^{m-1-k}}\\
		&+ \Big(1-\chixi\Big)\\&\times\sum\limits_{|\gamma| = m-k} \frac{\lambda(t)^{m-k} a_{\ve,\,m-k,\,\gamma}(t,\,x) \xi^\gamma}{ h(t,\,\xi)^{m-1-k}},
		\end{aligned}\\
		b_{m-k}(t,\,x,\,\xi) &=
		\begin{aligned}[t] & \sum\limits_{|\gamma| < m-k} \frac{b_{m-k,\,\gamma}(t,\,x) \xi^\gamma}{ h(t,\,\xi)^{m-1-k}}
		+ \Big(1-\chixi\Big)\\ &\times\sum\limits_{|\gamma| = m-k} \frac{\lambda(t)^{m-k} (a_{m-k,\,\gamma}(t,\,x)-a_{\ve,\,m-k,\,\gamma}(t,\,x)) \xi^\gamma}{ h(t,\,\xi)^{m-1-k}},
		\end{aligned}
	\end{align*}
	and $a_{\ve,\,m-k,\,\gamma}(t,\,x)$ are the regularized coefficients of Definition~\ref{CP2:PROOF:Def:RegCoeff} with $\ve = \jxi^{-1}$ and $k = 0,\,\ldots,\,m-1$.
	
	By using this approach, in the pseudodifferential zone the pseudodifferential operator $A$ contains all coefficients of the principal part of the original equation. In the hyperbolic zone, the original coefficients are replaced by the regularized coefficients (which we need for the diagonalization in the hyperbolic zone). The pseudodifferential operator $B$ contains all lower order terms and the terms arising due to the regularization in the hyperbolic zone.
	
	\begin{Proposition}
		The symbol $a_{m-k} = a_{m-k}(t,\,x,\,\xi)$ belongs to
		\begin{equation*}
		\T_{2N,\,M}(1,\,0,\,0) + \S_{N,\,M}(1,\,1,\,0,\,0,\,0),
		\end{equation*}
		the symbol $b_{m-k}=b_{m-k}(t,\,x,\,\xi)$ belongs to
		\begin{equation*}
		\T_{2N,\,M}(1,\,0,\,0) + \S_{N,\,M}(0,\,1,\,0,\,1,\,0) + \S_{N,\,M}(0,\,1,\,0,\,0,\,1).
		\end{equation*}
		Furthermore, they all belong to $B^\infty_K$ with respect to the spatial variables.
	\end{Proposition}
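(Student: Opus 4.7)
The plan is to split each of $a_{m-k}$ and $b_{m-k}$ according to the cutoff functions built into its definition, verify the $\T_{2N,M}$-estimate on $\Zpd(2N,M)$ and the $\S_{N,M}$-estimate on $\Zhyp(N,M)$ separately, and then read off the behavior of derivatives in $\xi$ and of products using Proposition~\ref{Prop:SymbolProp} and Lemma~\ref{Lemma:SymbolProd}. The $B^\infty_K$ property in $x$ is immediate: all $x$-dependence enters only through $a_{m-k,\gamma}$, its time-mollification $a_{\ve,m-k,\gamma}$ (which inherits the same spatial bounds since the convolution acts in $t$ alone) and $b_{m-k,\gamma}$, each of which carries the weight sequence $K_{|\beta|}$ by \ref{CP2:CoeffPrinc} and \ref{CP2:LeviCond}.

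For $a_{m-k}$ I would handle the two summands separately. The $\chi(\jxi/(N(w(\Lambda(t)))^m))$ piece is supported in $\Zpd(2N,M)$, where $h=\varrho$; combining $|\xi^\gamma|\leq \jxi^{m-k}$ with the zone condition $\jxi\leq 2N(w(\Lambda(t)))^m$ and the lower bound $\varrho^m\geq \jxi\lambda^m(w(\Lambda))^{m(m-1)}$ gives $\lambda^{m-k}\jxi^{m-k}\leq C\varrho^{m-k}$, so division by $\varrho^{m-1-k}$ leaves one factor of $\varrho$, landing in $\T_{2N,M}(1,0,0)$. The $(1-\chi)$ piece lives in $\Zhyp(N,M)$, where $h=\jxi\lambda(t)$; substitution produces a symbol of size $\jxi\lambda(t)$, which is precisely $\S_{N,M}(1,1,0,0,0)$. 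Derivatives in $\xi$ lower powers of $\jxi$ as expected; derivatives of $\chi$ are supported on the overlap strip $N(w)^m\leq \jxi\leq 2N(w)^m$, in which $\varrho\sim \jxi\lambda$ and $\partial_t\varrho/\varrho\sim \lambda/\Lambda$, exactly as in the proof of Lemma~\ref{Lemma:SymbolProd}, so the two local estimates are consistent.

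For $b_{m-k}$ the delicate piece is the first sum, which carries no cutoff and must be estimated on both zones. I would split it by $|\gamma|$. For $|\gamma|\geq 1$ I use the Levi bound from \ref{CP2:LeviCond}: on $\Zpd$, the lower estimate $\varrho^{m-k}\geq \jxi^{(m-k)/m}\lambda^{m-k}(w(\Lambda))^{(m-1)(m-k)}$ combined with $\jxi/(w(\Lambda))^m\leq 2N$ yields $\lambda^{m-k}\jxi^{|\gamma|}(w)^{m(m-k-|\gamma|)}/\varrho^{m-1-k}\leq C\varrho\cdot(\jxi/(w)^m)^{|\gamma|-(m-k)/m}\leq C\varrho$, since the exponent $|\gamma|-(m-k)/m\geq 0$ for $|\gamma|\geq 1$. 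For $|\gamma|=0$ the Levi bound is in general \emph{weaker} than the uniform bound $|b_{m-k,0}(t,x)|\leq C_0 K_0$ implicit in $b_{m-k,0}\in C([0,T];B^\infty_K)$; using that bound together with $\varrho\geq 1$ gives $|b_{m-k,0}|/\varrho^{m-1-k}\leq C\varrho$, again in $\T_{2N,M}(1,0,0)$. On $\Zhyp$, where $h\sim \jxi\lambda$, one uses $(w(\Lambda))^m\leq \jxi/N$ to absorb the extra powers of $(w)^m$ appearing for small $|\gamma|$, placing the first sum in $\S_{N,M}(0,1,0,1,0)$. For the second sum of $b_{m-k}$, Proposition~\ref{CP2:PROOF:Prop:RegCoeff}(ii) applied with $\ve=\jxi^{-1}$ (together with the remark following it) gives $|D_x^\beta\partial_\xi^\alpha(a_{m-k,\gamma}-a_{\ve,m-k,\gamma})|\lesssim K_{|\beta|}\jxi^{-|\alpha|}\vp(\jxi)/\jxi$, and combining with $h\sim \jxi\lambda$ on the support of $(1-\chi)$ and $|\gamma|=m-k$ produces exactly a symbol in $\S_{N,M}(0,1,0,0,1)$.

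The main obstacle is the pseudodifferential-zone estimate for the $|\gamma|=0$ term in $b_{m-k}$: the Levi condition alone does not suffice there, and the argument really requires exploiting the ambient continuity bound built into the hypothesis $b_{m-k,\gamma}\in C([0,T];B^\infty_K)$. Once this dichotomy -- continuity for $|\gamma|=0$, Levi for $|\gamma|\geq 1$ -- is identified, the remainder of the verification is a direct application of the zone conditions, the lower bound on $\varrho$ from the proof of Lemma~\ref{Lemma:rho}, Proposition~\ref{CP2:PROOF:Prop:RegCoeff} and the Leibniz rule, together with the symbol hierarchies of Proposition~\ref{Prop:SymbolProp} used to pass between equivalent classes in the overlap strip of the two zones.
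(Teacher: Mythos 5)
Your proposal is correct and follows essentially the same route as the paper: decompose by the built-in cutoffs, verify the $\T_{2N,M}$ and $\S_{N,M}$ estimates zone by zone using the zone conditions and the lower bound $\varrho^{m}\geq \jxi\lambda^{m}(w(\Lambda))^{m(m-1)}$, and read off the $B^\infty_K$ property from assumptions \ref{CP2:CoeffPrinc}, \ref{CP2:LeviCond} and the symbolic calculus.

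There is one point where your account is actually more careful than the paper's stated argument. For the first sum in $b_{m-k}$, the paper invokes the inequality
$\jxi^{|\gamma|} \leq C\,(w(\Lambda(t)))^{m|\gamma|-(m-k)}\,\jxi^{(m-k)/m}$
in $\Zpd(2N,M)$. Rearranging, this is $\jxi^{(m|\gamma|-(m-k))/m}\leq C\,(w(\Lambda(t)))^{m|\gamma|-(m-k)}$, which for exponent $m|\gamma|-(m-k)>0$ reduces to the zone relation $\jxi\leq C(w(\Lambda(t)))^m$, but for $|\gamma|=0$ (where $m|\gamma|-(m-k)<0$) the direction flips and one would need $(w(\Lambda(t)))^m\leq C\jxi$, which is exactly the wrong inequality in $\Zpd(2N,M)$. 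Your dichotomy -- use the $C([0,T];B^\infty_K)$ boundedness together with $\varrho\geq 1$ when $|\gamma|=0$, and the Levi bound with the zone relation when $|\gamma|\geq 1$ -- resolves this and is the correct justification; the paper's conclusion is right but its stated inequality does not cover the $|\gamma|=0$ term. The rest of your verification ($a_{m-k}$, the hyperbolic-zone estimates, the second sum via Proposition~\ref{CP2:PROOF:Prop:RegCoeff}, and the $B^\infty_K$ bookkeeping) matches the paper's argument.
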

	\begin{proof}
		We first observe that
		\begin{equation*}
		\begin{aligned}
			h(t,\,\xi)^{-(m-1-k)} &\in \T_{2N,\,M}(-(m-1-k),\,0,\,0)\\& + \S_{N,\,M}(-(m-1-k),\,-(m-1-k),\,0,\,0,\,0).
			\end{aligned}
		\end{equation*}
		Application of Lemma~\ref{Lemma:SymbolProd} yields the desired symbol class for $a_{m-k}$, when we keep in mind that all $a_{m-k,\,\gamma}$ and $a_{\ve,\,m-k,\,\gamma}$ are bounded on $[0,\,T]$ and
		\begin{equation*}
			\chixi \sum\limits_{|\gamma| = m-k} \lambda(t)^{m-k} \xi^\gamma \in \T_{2N,\,M}(m-k,\,0,\,0),
		\end{equation*}
		where we used that $\jxi^{m-k} \leq C \big(w(\Lambda(t))\big)^{(m-1)(m-k)} \jxi^{\frac{m-k}{m}}$ in $\Zpd(2N,\,M)$.
		
		The first part of the symbol class for $b_{m-k}$ is obtained by using the Levi-condition \ref{CP2:LeviCond}, i.e.
		\begin{equation*}
			\big|D_x^\beta b_{m-k,\,\gamma}(t,\,x)\big| \leq C K_{|\beta|} \lambda(t)^{m-k} \big(w(\Lambda(t))\big)^{m(m-k-|\gamma|)}.
		\end{equation*}
		We obtain that
		\begin{equation*}
			\sum\limits_{|\gamma| < m-k} b_{m-k,\,\gamma}(t,\,x) \xi^\gamma \in \T_{2N,\,M}(m-k,\,0,\,0) + \S_{N,\,M}(m-k-1,\,m-k,\,0,\,1,\,0),
		\end{equation*}
		where we used that $\jxi^{|\gamma|} \leq C \big(w(\Lambda(t))\big)^{m|\gamma|-(m-k)} \jxi^{\frac{m-k}{m}}$ in $\Zpd(2N,\,M)$, as well as $\big(w(\Lambda(t))\big)^{m(m-k-|\gamma|)} \jxi^{|\gamma|} \leq C \jxi^{m-k-1} \big(w(\Lambda(t))\big)^m$ in $\Zhyp(N,\,M)$.
		This yields that
		\begin{equation*}
			\sum\limits_{|\gamma| < m-k} \frac{b_{m-k,\,\gamma}(t,\,x) \xi^\gamma}{ h(t,\,\xi)^{m-1-k}}
			\in \T_{N,\,M}(1,\,0,\,0) + \S_{N,\,M}(0,\,1,\,0,\,1,\,0).
		\end{equation*}
	
		For the second summand of $b_{m-k}(t,\,x,\,\xi)$, we note that in view of Proposition~\ref{CP2:PROOF:Prop:RegCoeff} we have
		\begin{align*}
			\Big(1-\chixi\Big)\sum\limits_{|\gamma| = m-k} \lambda(t)^{m-k} \big(a_{m-k,\,\gamma}(t,\,x)-a_{\ve,\,m-k,\,\gamma}(t,\,x)\big) \xi^\gamma\\ \in \S_{N,\,M}(m-k-1,\,m-k,\,0,\,0,\,1),
		\end{align*}
		which yields that
		\begin{align*}
			\Big(1-\chixi\Big)\sum\limits_{|\gamma| = m-k} \frac{\lambda(t)^{m-k} (a_{m-k,\,\gamma}(t,\,x)-a_{\ve,\,m-k,\,\gamma}(t,\,x)) \xi^\gamma}{ h(t,\,\xi)^{m-1-k}}\\ \in \S_{N,\,M}(0,\,1,\,0,\,0,\,1).
		\end{align*}
		This gives the desired symbol classes.
		
		The fact that $a_{m-k}=a_{m-k}(t,\,x,\,\xi)$ and $b_{m-k}=b_{m-k}(t,\,x,\,\xi)$ are all $B^\infty_K$ in $x$ follows from \ref{CP2:CoeffPrinc} and \ref{CP2:LeviCond} and the symbolic calculus, where we include $K_{|\beta|}$ in all estimates.
	\end{proof}
	We conclude that
	\begin{align*}
		\sigma(A) \in \T_{2N,\,M}(1,\,0,\,0) &+ \S_{N,\,M}(1,\,1,\,0,\,0,\,0),\\
		\sigma(B) \in \T_{2N,\,M}(1,\,0,\,0)&+ \T_{2N,\,M}(0,\,1,\,0) + \S_{N,\,M}(0,\,1,\,0,\,1,\,0)\\ & + \S_{N,\,M}(0,\,1,\,0,\,0,\,1)+ \S_{N,\,M}(0,\,0,\,1,\,0,\,0),
	\end{align*}
	where we used that
	\begin{equation*}
		(D_t h(t,\,\xi)) h(t,\,\xi)^{-1} \in \T_{2N,\,M}(0,\,1,\,0)+ \S_{N,\,M}(0,\,0,\,1,\,0,\,0).
	\end{equation*}
	
	To prepare diagonalization in the hyperbolic zone, we introduce approximated characteristic roots $\tau_k = \tau_k(t,\,x,\,\xi)$ which are the solutions to
	\begin{equation*}
		\tau^m = \sum\limits_{j = 0}^{m-1} \sum\limits_{|\gamma| = m-j} \lambda(t)^{m-j}a_{\ve,\,m-j,\,\gamma}(t,\,x)\xi^\gamma \tau^j .
	\end{equation*}
	
	\begin{Proposition}
		For $\ve = \jxi^{-1}$, the roots $\tau_k = \tau_k(t,\,x,\,\xi)$ satisfy
		\begin{enumerate}[label = (\roman*)]
			\item $\big|D_x^\beta \partial_\xi^\alpha\tau_k\big|\leq C_\alpha K_{|\beta|} \jxi^{1-|\alpha|} \lambda(t)$, for all $k = 1,\,\ldots,\,m$,
			\item $	|\partial_t \tau_k| \leq C\lambda(t) \jxi \Big(\frac{\lambda^\prime(t)}{\lambda(t)} + \vp(\jxi)\Big)$,	for $k = 1,\,\ldots,\,m$ and all $(t,\,x,\,\xi) \in \Zhyp(N,\,M)$, i.e. $\partial_t \tau_k \in \S_{N,\,M}(1,\,1,\,1,\,0,\,0) + \S_{N,\,M}(1,\,1,\,0,\,0,\,1)$.
		\end{enumerate}
	\end{Proposition}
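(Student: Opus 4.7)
The plan is to rescale the characteristic equation so that strict hyperbolicity applies, then use the implicit function theorem to transfer the resulting $C^\infty$-smoothness and size bounds back to the $\tau_k$, and finally differentiate the defining equation implicitly in $t$, using Proposition~\ref{CP2:PROOF:Prop:RegCoeff} with the choice $\ve = \jxi^{-1}$.

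For (i), set $\sigma := \tau/(\lambda(t)\jxi)$ and divide the defining equation
\[
\tau^m = \sum_{j=0}^{m-1} \sum_{|\gamma|=m-j} \lambda(t)^{m-j} a_{\ve,\,m-j,\,\gamma}(t,\,x)\,\xi^\gamma\,\tau^j
\]
by $\lambda(t)^m \jxi^m$. This yields
\[
\sigma^m = \sum_{j=0}^{m-1}\sum_{|\gamma|=m-j} a_{\ve,\,m-j,\,\gamma}(t,\,x)\,\frac{\xi^\gamma}{\jxi^{m-j}}\,\sigma^j,
\]
whose coefficients are bounded uniformly in $(t,\,x,\,\xi)$ and, after freezing at $\lambda\equiv 1$, coincide up to regularization with the symbol of the strictly hyperbolic problem from \ref{CP2:Hyperbolic}. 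Hence the roots $\sigma_k(t,\,x,\,\omega)$ (with $\omega = \xi/\jxi$) are simple, bounded, and bounded away from each other for large $|\xi|$. Since the regularization $a \mapsto a_{\ve}$ is a convolution in $t$ only, $B^\infty_K$-regularity in $x$ is preserved, so the implicit function theorem applied to $\sigma$ yields $\sigma_k \in B^\infty_K$ in $x$ and $\sigma_k \in S^0$ in $\xi$. Undoing the rescaling then gives $|D_x^\beta \partial_\xi^\alpha \tau_k| \leq C_\alpha K_{|\beta|} \lambda(t)\jxi^{1-|\alpha|}$, which is the asserted estimate (and trivially holds at $t=0$ where $\lambda(0)=0$ forces $\tau_k\equiv 0$).

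For (ii) I would differentiate $P(t,\,x,\,\xi,\,\tau_k)=0$ with respect to $t$, where $P$ denotes the polynomial in $\tau$ whose roots are the $\tau_k$. This gives $\partial_t \tau_k = -(\partial_t P)/(\partial_\tau P)\big|_{\tau = \tau_k}$. Since in $\Zhyp(N,\,M)$ the roots $\tau_k$ are simple of size $\lambda(t)\jxi$, the denominator satisfies $|\partial_\tau P(t,\,x,\,\xi,\,\tau_k)|\sim \lambda(t)^{m-1}\jxi^{m-1}$. The numerator splits into two kinds of contributions: those produced by $\partial_t \lambda(t)^{m-j}$, controlled via \ref{CP2:ShapeFunc} by $C\lambda(t)^{m-j}(\lambda'(t)/\lambda(t))$, and those produced by $\partial_t a_{\jxi^{-1},\,m-j,\,\gamma}(t,\,x)$, which by Proposition~\ref{CP2:PROOF:Prop:RegCoeff}(i) with $\ve = \jxi^{-1}$ are bounded by $C\vp(\jxi)$. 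Combining with the trivial bound $|\xi^\gamma\tau_k^j| \leq C\lambda(t)^j\jxi^{m-j+j} = C\lambda(t)^j\jxi^m$ for $|\gamma|+j=m$, dividing by $\partial_\tau P$ yields exactly
\[
|\partial_t \tau_k| \leq C\,\lambda(t)\jxi\Big(\tfrac{\lambda'(t)}{\lambda(t)} + \vp(\jxi)\Big),
\]
from which the symbol-class membership $\partial_t\tau_k \in \S_{N,\,M}(1,1,1,0,0)+\S_{N,\,M}(1,1,0,0,1)$ follows immediately.

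The main obstacle I expect is keeping the combinatorial constants under control in the higher-order $x$- and $\xi$-derivatives, in order to obtain the weight-sequence bound with precisely the factor $K_{|\beta|}$. Since $a_{\ve,\,m-j,\,\gamma}$ is obtained from $a_{m-j,\,\gamma}$ by convolution in $t$ only, the assumption \ref{CP2:CoeffPrinc} on $x$-derivatives passes directly to $a_\ve$, so the argument reduces to checking that the implicit-function-theorem iterations (or equivalently, Faà di Bruno applied to the algebraic expression for $\sigma_k$) produce only products of $K_p$'s that can be absorbed into a single $K_{|\beta|}$ via the standard weight-sequence logarithmic-convexity machinery already invoked in \cite{Cicognani.2017}; this is routine but requires care.
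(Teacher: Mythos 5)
Your proposal is correct and follows essentially the same route as the paper's proof. For part (ii) you and the paper both write $\partial_t\tau_k = -P_t/P_\tau$ via the implicit function theorem, exploit $|\tau_k|\sim\lambda(t)\jxi$ and simplicity of the roots in $\Zhyp(N,M)$ to bound the denominator by $\lambda(t)^{m-1}\jxi^{m-1}$, split the numerator into the $\partial_t\lambda^{m-j}$ part (bounded via \ref{CP2:ShapeFunc}) and the $\partial_t a_{\jxi^{-1},\,m-j,\,\gamma}$ part (bounded by $\vp(\jxi)$ via Proposition~\ref{CP2:PROOF:Prop:RegCoeff}), and divide; for part (i) the paper merely states that the estimate ``follows from the definition of $\tau_k$ and strict hyperbolicity in $\Zhyp$'' and your rescaling $\sigma = \tau/(\lambda(t)\jxi)$ plus implicit function theorem is just making that remark explicit. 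Two small notational points: $\sigma_k$ is not exactly a function of $(t,x,\omega)$ alone, since the regularization parameter $\ve=\jxi^{-1}$ and the factor $\xi^\gamma/\jxi^{m-j}$ depend on $|\xi|$, not just on the direction, so you really obtain $\sigma_k$ as an $S^0$ symbol rather than a degree-zero homogeneous function (this does not affect the estimate); and your remark about $t=0$ is vacuous since $\Zhyp(N,M)$ does not contain $t=0$.
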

	\begin{proof}
	We only show the proof of the second assertion since the first follows from the definition of $\tau_k$ and the fact that the Cauchy problem is strictly hyperbolic in $\Zhyp(N,\,M)$.
		
	For (ii), we apply the implicit function theorem to
	\begin{equation*}
		P(\tau(t,\,x,\,\xi),\,t,\,x,\,\xi) = \tau^m - \sum\limits_{j=0}^{m-1} \sum\limits_{|\gamma| = m-j} a_{\ve,\,m-j,\,\gamma}(t,\,x) \xi^\gamma \tau^j = 0,
	\end{equation*}
	and obtain that
	\begin{equation*}
		\partial_t \tau(t,\,x,\,\xi) = - \frac{P_t}{P_\tau} =  \frac{-\sum\limits_{j=0}^{m-1} \sum\limits_{|\gamma|=m-j} (\partial_t (\lambda(t)^{m-j} a_{\ve,\,m-j,\,\gamma}(t,\,x))) \xi^\gamma \tau^j}{m \tau^{m-1} - \sum\limits_{j = 1}^{m-1}\sum\limits_{|\gamma|=m-j} j \lambda(t)^{m-j}a_{\ve,\,m-j,\,\gamma}(t,\,x) \xi^\gamma \tau^{j-1}}.
	\end{equation*}
	We use that $|\tau| \sim \lambda(t) \jxi$ in $\Zhyp(N,\,M)$ to estimate
	\begin{align*}
		|\partial_t \tau| &\lesssim \frac{\sum\limits_{j=0}^{m-1} \sum\limits_{|\gamma|=m-j} \lambda(t)^{m} \jxi^{|\gamma|+j}\Big(j\frac{\lambda^\prime(t)}{\lambda(t)} a_{\ve,\,m-j,\,\gamma}(t,\,x) + \big|\partial_t a_{\ve,\,m-j,\,\gamma}(t,\,x)\big|\Big)  }{\lambda(t)^{m-1} \jxi^{m-1}}\\
		&\lesssim \lambda(t) \jxi \Big(\frac{\lambda^\prime(t)}{\lambda(t)} + \vp(\jxi)\Big),
	\end{align*}
	where we used Proposition~\ref{CP2:PROOF:Prop:RegCoeff} to estimate $|\partial_t a_{\ve,\,m-j,\,\gamma}(t,x)|$.
	\end{proof}

	We renumber the roots $\tau_k$ in such a way that
	\begin{equation*}
		\tau_1 < \tau_2 < \ldots < \tau_m
	\end{equation*}
	for all $(t,\,x,\,\xi) \in \Zhyp(N,\,M)$ and define for $k =  1,\,\ldots,\,m$ the symbols
	\begin{equation*}
		\psi_k(t,\,x,\,\xi) = d_k \varrho(t,\,\xi) \chixi + \tau_k(t,\,x,\,\xi) \Big(1-\chixi\Big),
	\end{equation*}
	where $d_1 < d_2 < \ldots < d_m$ are real, positive numbers.
	
	\begin{Proposition}\label{CP2:PROOF:psi}
		The symbols $\psi_k=\psi_k(t,\,x,\,\xi)$ and $h=h(t,\,\xi)$ satisfy
		\begin{enumerate}[label = (\roman*)]
			\item $\psi_k\in \T_{2N,\,M}(1,\,0,\,0) + \S_{N,\,M}(1,\,1,\,0,\,0,\,0)$,
			\item $\partial_t \psi_k \in \T_{2N,\,M}(1,\,1,\,0)+ \S_{N,\,M}(1,\,1,\,1,\,0,\,0)+ \S_{N,\,M}(1,\,1,\,0,\,0,\,1)$,
			\item $\sigma(\psi_k \circ h^{-1})\in \T_{2N,\,M}(0,\,0,\,0)+\S_{N,\,M}(0,\,0,\,0,\,0,\,0),$
			\item $\sigma(\partial_t(\psi_k \circ h^{-1})) \in \S_{N,\,M}(0,\,0,\,1,\,0,\,0) + \S_{N,\,M}(0,\,0,\,0,\,0,\,1).$
		\end{enumerate}
	\end{Proposition}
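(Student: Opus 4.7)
My plan is to prove the four claims by direct symbol-class bookkeeping, exploiting two structural features: $h(t,\xi)$ is independent of $x$, so the operator composition $\psi_k\circ h^{-1}$ reduces at the symbol level to the exact product $\psi_k(t,x,\xi)h(t,\xi)^{-1}$ with no asymptotic correction terms; and on the pseudodifferential zone both $\psi_k$ and $h$ are literally scalar multiples of $\varrho$, so the quotient $\psi_k/h$ is constant in $t$ there.

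Part (i) follows by inserting the definition $\psi_k = d_k\varrho\chixi + \tau_k(1-\chixi)$ and reading off the classifications already noted in the paragraph introducing $h$, together with the bound $|D_x^\beta\partial_\xi^\alpha \tau_k|\lesssim K_{|\beta|}\jxi^{1-|\alpha|}\lambda(t)$ from the preceding proposition. For (ii), apply the product rule and split the result into four pieces: $d_k(\partial_t\varrho)\chixi$, which lies in $\T_{2N,M}(1,1,0)$ after writing $\partial_t\varrho=\varrho\cdot(\partial_t\varrho/\varrho)$ and invoking Lemma~\ref{Lemma:rho}; $(\partial_t\tau_k)(1-\chixi)$, which lands in $\S_{N,M}(1,1,1,0,0)+\S_{N,M}(1,1,0,0,1)$ by part (ii) of the preceding proposition; and two pieces involving $\partial_t\chixi$, which are supported in the transition strip $N(w(\Lambda(t)))^m\leq \jxi\leq 2N(w(\Lambda(t)))^m$ and, via the chain rule and item (ii) of Proposition~\ref{CP2:Remark:EstLambdaW}, produce an extra factor of order $\lambda/\Lambda$ on top of $\varrho\sim\jxi\lambda(t)$, placing them into $\S_{N,M}(1,1,1,0,0)$.

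For (iii), the symbol of the composition equals $\psi_k(t,x,\xi)h(t,\xi)^{-1}$ exactly. On $\Zpd(N,M)$ this reduces to the constant $d_k$, hence lies in $\T_{2N,M}(0,0,0)$; on $\Zhyp(2N,M)$ it equals $\tau_k/(\jxi\lambda(t))$, which together with all its $\xi$-derivatives is bounded by the estimate on $\tau_k$, hence lies in $\S_{N,M}(0,0,0,0,0)$. For (iv), the key observation is the exact cancellation $\psi_k h^{-1}=d_k$ on $\Zpd(N,M)$, which forces $\partial_t(\psi_k h^{-1})\equiv 0$ there; this is precisely why the claimed class for (iv) has no $\T$-component. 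On $\Zhyp(2N,M)$ expand
\[
\partial_t\bigl(\tau_k/(\jxi\lambda(t))\bigr) = \frac{\partial_t\tau_k}{\jxi\lambda(t)} - \frac{\tau_k}{\jxi\lambda(t)}\cdot\frac{\lambda'(t)}{\lambda(t)},
\]
and use $\lambda'/\lambda\sim\lambda/\Lambda$ (Definition~\ref{SF:DEF}) together with the estimate on $\partial_t\tau_k$ to reach $\S_{N,M}(0,0,1,0,0)+\S_{N,M}(0,0,0,0,1)$; residual $\partial_t\chixi$ contributions live in the transition strip and, by the same calibration, fit into $\S_{N,M}(0,0,1,0,0)$.

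The main obstacle, and essentially the only non-routine point, is the bookkeeping on the overlap $\Zhyp(N,M)\cap\Zpd(2N,M)$: the $\partial_t\chixi$ contributions must be reassigned to the claimed classes, and for (iv) one must verify that the pseudodifferential cancellation survives across the transition. This is handled systematically by invoking the equivalences $\varrho\sim\jxi\lambda(t)$ and $\partial_t\varrho/\varrho\sim\lambda/\Lambda$ recorded in the remark following Lemma~\ref{Lemma:SymbolProd}, which show that the two descriptions of each quantity give compatible estimates on the overlap.
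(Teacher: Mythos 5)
Your proposal is correct and follows essentially the same route as the paper's proof: the same product-rule decomposition of $\partial_t\psi_k$ for (ii), the same observation that $h$ being $x$-independent collapses the composition asymptotic series to the exact quotient $\psi_k/h$ for (iii), and the same classification of $\partial_t\tau_k$ driving (iv). You supply somewhat more explicit detail than the paper — in particular spelling out the cancellation $\psi_k h^{-1}=d_k$ on $\Zpd(N,M)$ (which the paper leaves implicit) and tracking the $\partial_t\chixi$ transition-strip terms, which the paper bundles into "Lemma~\ref{Lemma:SymbolProd} and relation \eqref{Lemma:rho:estDtRho} yield the desired symbol class" — but the structure and the key lemmas invoked are the same.
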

	\begin{proof}
		Assertion (i) follows immediately from Lemma~\ref{Lemma:SymbolProd}.
		
		As for (ii), we note that
		\begin{align*}
		\partial_t \psi_k &= \chi^\prime\Big(\frac{\jxi}{N(w(\Lambda(t)))^m}\Big) \frac{\lambda(t)}{\Lambda(t)} \varrho + \chixi \partial_t \varrho\\ &+ \chi^\prime\Big(\frac{\jxi}{N(w(\Lambda(t)))^m}\Big) \frac{\lambda(t)}{\Lambda(t)} \tau_k + \Big(1-\chixi\Big) \partial_t \tau_k.
		\end{align*}
		Using Lemma~\ref{Lemma:SymbolProd} and relation \eqref{Lemma:rho:estDtRho} for $\partial_t \varrho$ yields the desired symbol class.
		
		The third assertion is a consequence of
		\begin{align*}
		\sigma(\psi_k \circ h^{-1}) &\sim \sum\limits_{\alpha \in \N^n} \partial_\xi^\alpha \psi_k (t,\,x,\,\xi) D_x^\alpha h^{-1}(t,\,\xi)= \frac{\psi_k(t,\,x,\,\xi)}{h(t,\,\xi)}\\ &= \frac{d_k \chixi \varrho(t,\,\xi) + \Big(1-\chixi\Big) \tau_k(t,\,x,\,\xi)}{\chixi \varrho(t,\,\xi) + \Big(1-\chixi\Big)\lambda(t) \jxi},
		\end{align*}
		while the last assertion follows from
		\begin{equation*}
		\partial_t \tau_k \in \S_{N,\,M}(1,\,1,\,1,\,0,\,0) + \S_{N,\,M}(1,\,1,\,0,\,0,\,1).
		\end{equation*}
	\end{proof}
		
	We define the pseudodifferential operator $M = M(t,\,x,\,D_x)$ with symbol
	\begin{equation*}
		M(t,\,x,\,\xi) = \sigma(M) = \begin{pmatrix}
			1 & \ldots & 1\\
			\frac{\psi_1(t,\,x,\,\xi)}{h(t,\,\xi)} & \ldots & \frac{\psi_m(t,\,x,\,\xi)}{h(t,\,\xi)}\\
			\vdots & \vdots & \vdots\\
			\Big(\frac{\psi_1(t,\,x,\,\xi)}{h(t,\,\xi)}\Big)^{m-1} & \ldots & \Big(\frac{\psi_m(t,\,x,\,\xi)}{h(t,\,\xi)}\Big)^{m-1}
		\end{pmatrix},
	\end{equation*}
	which belongs to $\T_{2N,\,M}(0,\,0,\,0)+\S_{N,\,M}(0,\,0,\,0,\,0,\,0)$. By construction the symbols $\psi_k(t,\,x,\,\xi)$ satisfy $\psi_k(t,\,x,\,\xi) \neq \psi_j(t,\,x,\,\xi)$ if $k \neq j$ for all $(t,\,x,\,\xi) \in [0,\,T]\times\R^n\times\R^n$. Hence, $\det M(t,\,x,\,\xi) \neq 0$ and the matrix $M(t,\,x,\,\xi)$ is invertible, which allows us to apply Lemma~\ref{Lemma:Parametrix} guaranteeing the existence of a parametrix $M^\#(t,\,x,\,D_x)$ with symbol
	\begin{equation*}
	M^\#(t,\,x,\,\xi) = \sigma(M^\#) \in\T_{2N,\,M}(0,\,0,\,0)+\S_{N,\,M}(0,\,0,\,0,\,0,\,0).
	\end{equation*}
	
	\medskip
	
	We define $V$ by $U = M V$ and obtain that Cauchy problem \eqref{CP2:PROOF:TransformedSystem} can be transformed to
	\begin{equation}\label{CP2:PROOF:TransformedSystem2}
		M^\# M D_t V = (M^\# \circ (A+B) \circ M)V - M^\#(D_t M) V,
	\end{equation}
	with initial conditions
	\begin{equation*}
		V(0,\,x) = M^\#(0,\,x,\,D_x) H(0,\,D_x) \big(u(0,\,x),\,D_t u(0,\,x),\,\ldots,\,D_t^{m-1} u(0,\,x)\big)^T.
	\end{equation*}
	Using the composition results from Lemma~\ref{Lemma:CompPseu}, we obtain that
	\begin{equation*}
		\sigma(M^\#\circ A \circ M) = \sigma(M^\#) \sigma(A) \sigma(M) + f_0 + r_\infty,
	\end{equation*}
	with $f_0 = 0$ in $\Zpd(N,\,M)$ and $f_0 \in \T_{2N,\,M}(1,\,0,\,0) + \S_{N,\,M}(0,\,1,\,0,\,0,\,0)$ in the remaining part of the extended phase space, and $r_\infty \in C^\infty\big([0,\,T];\,\OPS^{-\infty}\big)$.
	Due to the construction of $M$ and $M^\#$, we have that
	\begin{equation*}
		\sigma(M^\#) \sigma(A) \sigma(M)
			=\begin{pmatrix}
				\tau_1(t,\,x,\,\xi) & &\\
				& \ddots & \\
				& & \tau_m(t,\,x,\,\xi)
			\end{pmatrix} \text{ in } \Zhyp(2N,\,M),
	\end{equation*}
	and $\sigma(M^\#) \sigma(A) \sigma(M) \in \T_{2N,\,M}(1,\,0,\,0)$ in the remaining part of the extended phase space. Applying Lemma~\ref{Lemma:CompPseu} to $M^\#\circ B \circ M$ yields
	\begin{align*}
		\sigma(M^\#\circ B \circ M) \in \T_{2N,\,M}(1,\,0,\,0)+ \T_{2N,\,M}(0,\,1,\,0) + \S_{N,\,M}(0,\,1,\,0,\,1,\,0)\\  + \S_{N,\,M}(0,\,1,\,0,\,0,\,1)+ \S_{N,\,M}(0,\,0,\,1,\,0,\,0).
	\end{align*}
	Lastly, we consider the term $M^\#(D_t M)$.
	To characterize $D_t M$, we only have to consider the symbols
	\begin{equation*}
		\partial_t \Big(\frac{\psi_k(t,\,x,\,\xi)}{h(t,\,\xi)}\Big)^{j} =  j \Big(\frac{\psi_k(t,\,x,\,\xi)}{h(t,\,\xi)}\Big)^{j-1} \partial_t\Big(\frac{\psi_k(t,\,x,\,\xi)}{h(t,\,\xi)}\Big),
	\end{equation*}
	for $j = 0,\,\ldots,\,m-1$. Since $\frac{\psi_k(t,\,x,\,\xi)}{h(t,\,\xi)}\in \T_{2N,\,M}(0,\,0,\,0) + \S_{N,\,M}(0,\,0,\,0,\,0,\,0)$, we just consider $\partial_t \Big(\frac{\psi_k(t,\,x,\,\xi)}{h(t,\,\xi)}\Big)$ and obtain that
	\begin{equation*}
	\partial_t \Big(\frac{\psi_k(t,\,x,\,\xi)}{h(t,\,\xi)}\Big) = \frac{(\partial_t \psi_k(t,\,x,\,\xi))h(t,\,\xi) - \psi_k(t,\,x,\,\xi) (\partial_t h(t,\,\xi))}{h(t,\,\xi)^2}.
	\end{equation*}
	This yields that
	\begin{equation*}
	\partial_t \Big(\frac{\psi_k(t,\,x,\,\xi)}{h(t,\,\xi)}\Big) = 0 \text{ in } \Zpd(N,\,M),
	\end{equation*}
	and \begin{equation*}
	\partial_t \Big(\frac{\psi_k(t,\,x,\,\xi)}{h(t,\,\xi)}\Big) \in \S_{N,\,M}(0,\,0,\,1,\,0,\,0) + \S_{N,\,M}(0,\,0,\,0,\,0,\,1),
	\end{equation*}
	in the remaining part of the extended phase space. From this we conclude that
	\begin{align*}
	\sigma(M^\# (D_tM)) &= 0 \text{ in } \Zpd(N,\,M), \text{ and }\\
	\sigma(M^\# (D_tM)) &\in \S_{N,\,M}(0,\,0,\,1,\,0,\,0) + \S_{N,\,M}(0,\,0,\,0,\,0,\,1),
	\end{align*}
	since $\sigma(M^\#) \in \T_{2N,\,M}(0,\,0,\,0) + \S_{N,\,M}(0,\,0,\,0,\,0,\,0)$.
	
	We rewrite Cauchy problem \eqref{CP2:PROOF:TransformedSystem2} as
	\begin{equation}\label{CP2:PROOF:TransformedSystem3}
	D_t V = D(t,\,x,\,D_x) V + R(t,\,x,\,D_x) V,
	\end{equation}
	with initial conditions
	\begin{equation*}
		V(0,\,x) = M^\#(0,\,x,\,D_x) H(0,\,D_x) \big(u(0,\,x),\,D_t u(0,\,x),\,\ldots,\,D_t^{m-1} u(0,\,x)\big)^T,
	\end{equation*}
	where
	\begin{align*}
	\sigma(D(t,\,x,\,D_x)) &= \Big(1-\chixi\Big)	\begin{pmatrix}
						\tau_1 & &\\
						& \ddots & \\
						& & \tau_m
						\end{pmatrix}\\& \in \T_{2N,\,M}(0,\,0,\,-\infty) + \S_{N,\,M}(1,\,1,\,0,\,0,\,0),
	\end{align*}
	and
	\begin{align}
	\nonumber
	\sigma(R(t,\,x,\,D_x)) \in \T_{2N,\,M}(1,\,0,\,0)&+ \T_{2N,\,M}(0,\,1,\,0) + \S_{N,\,M}(0,\,1,\,0,\,1,\,0)\\\nonumber & + \S_{N,\,M}(0,\,1,\,0,\,0,\,1)+ \S_{N,\,M}(0,\,0,\,1,\,0,\,0)\\&+ \S_{N,\,M}(0,\,1,\,0,\,0,\,0) + \S_{N,\,M}(0,\,0,\,0,\,0,\,1).\label{PROOF:EstR}
	\end{align}

	\subsection{Conjugation}
	To obtain our desired energy estimate, we want to control the lower order terms in $R(t,\,x,\,D_x)$ by applying a change of variable that contains the loss of derivatives. For this purpose, we introduce for $t_0 \in[0,\,T]$ the pseudodifferential operator $\Phi=\Phi(t_0,\,D_x)$ having the symbol
	\begin{align*}
	&\sigma(\Phi(t_0,\,D_x))=\\ & \widetilde M_1 \int\limits_0^{t_0} \varrho( t,\,\xi) \chixi \rmd t + \widetilde M_2 \int\limits_0^{t_0} \frac{\partial_{t} \varrho(t,\,\xi)}{\varrho(t,\,\xi)} \chixi \rmd t
	\\
	&+ \widetilde M_3 \int\limits_0^{t_0} \lambda(t) \big(w(\Lambda( t))\big)^{m} \widetilde \chi\Big(\frac{\jxi}{N (w(\Lambda(t)))^m}\Big) \rmd  t \\ &+ \widetilde M_4 \int\limits_0^{t_0} \lambda( t) \vp(\jxi) \widetilde \chi\Big(\frac{\jxi}{N (w(\Lambda(t)))^m}\Big) \rmd  t +  \widetilde M_5 \int\limits_0^{t_0} \frac{\lambda( t)}{\Lambda( t)}\widetilde \chi\Big(\frac{\jxi}{N (w(\Lambda(t)))^m}\Big) \rmd  t\\& + \widetilde M_6 \int\limits_0^{t_0} \lambda( t) \widetilde \chi\Big(\frac{\jxi}{N (w(\Lambda(t)))^m}\Big) \rmd t \\
	&+ \widetilde M_7 \int\limits_0^{t_0}\vp(\jxi) \widetilde \chi\Big(\frac{\jxi}{N (w(\Lambda(t)))^m}\Big)\rmd t - M_8(T-\kappa t_0) \jxi^{\frac{1}{s}},
	\end{align*}
	where
	\begin{equation*}
		\widetilde \chi\Big(\frac{\jxi}{N (w(\Lambda(t)))^m}\Big) = \Big(1-\chixi\Big),
	\end{equation*}
and $\kappa > 0$ is determined later.
	
	We set
	\begin{equation*}
		V = \jbl D_x \jbr^{-\nu}  e^{\Phi(t,\,D_x)} W,
	\end{equation*}
	and obtain that Cauchy problem \eqref{CP2:PROOF:TransformedSystem3} is equivalent to
	\begin{align}
	\nonumber
			D_t W &=
			\begin{aligned}[t]
			&e^{-\Phi(t,\,D_x)} \jbl D_x \jbr^{\nu} D(t,\,x,\,D_x) \jbl D_x \jbr^{-\nu} e^{\Phi(t,\,D_x)}W\\&+ e^{-\Phi(t,\,D_x)}\jbl D_x \jbr^{\nu} R(t,\,x,\,D_x)\jbl D_x \jbr^{-\nu}  e^{\Phi(t,\,D_x)}W\\ &- e^{-\Phi(t,\,D_x)} \Big( D_t e^{\Phi(t,\,D_x)}\Big) W
			\end{aligned}\\
			\label{CP2:PROOF:TransformedSystem4}
			&=\begin{aligned}[t]&e^{-\Phi(t,\,D_x)} D(t,\,x,\,D_x) e^{\Phi(t,\,D_x)}W + e^{-\Phi(t,\,D_x)}R(t,\,x,\,D_x) e^{\Phi(t,\,D_x)}W\\ &- \Big( \sum_{k = 1}^8 D_t \Phi_k(t,\,D_x)\Big) W + R_\infty(t,\,x,\,D_x) W,
			\end{aligned}
	\end{align}
	with initial conditions
	\begin{equation*}
	\begin{aligned}
		W(0,\,x) =  \jbl D_x \jbr^{\nu} e^{-\Phi(0,\,D_x)} M^\#(0,\,x,\,D_x) H(0,\,D_x)\\\times \big(u(0,\,x),\,\ldots,\,D_t^{m-1}u(0,\,x)\big)^T,
	\end{aligned}
	\end{equation*}
	where $\Phi_k,\,k=1,\,\ldots,\,8$, denote the respective addends of $\Phi$ in order of their appearance and $R_\infty \in C\big([0,\,T];\, \OPS^{-\infty}\big)$.
	We want to apply Propositions~\ref{APP:PSEUDO:CALC:CON} and \ref{APP:PSEUDO:CALC:CON:EstR} to evaluate the conjugations $D_\Phi = e^{-\Phi} D e^{\Phi}$ and $R_\Phi = e^{-\Phi} R e^\Phi$. To this end, we note that
	\begin{align*}
		\widetilde M_2 \int\limits_0^{t_0} \frac{\partial_{t} \varrho(t,\,\xi)}{\varrho(t,\,\xi)} \chixi \rmd t &\leq C \widetilde M_2 \log(\varrho(t_0,\,\xi)) \chi\Big(\frac{\jxi}{N (w(\Lambda(t_0)))^m}\Big)\\ &\leq C \widetilde M_2 \log(\jxi) \chi\Big(\frac{\jxi}{N (w(\Lambda(t_0)))^m}\Big),
	\end{align*}
	and
	\begin{equation*}
	 \widetilde M_6 \int\limits_0^{t_0} \lambda( t) \widetilde \chi\Big(\frac{\jxi}{N (w(\Lambda(t)))^m}\Big) \rmd t  \leq C,
	\end{equation*}
	as well as
	\begin{equation*}
	\begin{aligned}
		 \widetilde M_5 &\int\limits_0^{t_0} \frac{\lambda( t)}{\Lambda( t)}\widetilde \chi\Big(\frac{\jxi}{N (w(\Lambda(t)))^m}\Big) \rmd  t\\ &\leq C \widetilde M_5  \int\limits_0^{t_1} \lambda(t) \big(w(\Lambda( t))\big)^{m} \widetilde \chi\Big(\frac{\jxi}{N (w(\Lambda(t)))^m}\Big) \rmd  t + C,
	\end{aligned}
	\end{equation*}
	due to Remark~\ref{CP2:Remark:EstLambdaW}, where $0 < t_1 $ is sufficiently small. Thus, we may write
	\begin{equation}\label{CP2:PROOF:EstPhi}
		\begin{aligned}[t]
		&\sigma\big(e^{\Phi(t_0,\,D_x)}\big)=\\
		&\exp\bigg\{M_1 \int\limits_0^{t_0} \varrho( t,\,\xi) \chixi \rmd t +  M_2 \log(\jxi) \chi\Big(\frac{\jxi}{N (w(\Lambda(t_0)))^m}\Big)
		\\
		&+ M_3 \int\limits_0^{t_0} \lambda(t) \big(w(\Lambda( t))\big)^{m} \widetilde \chi\Big(\frac{\jxi}{N (w(\Lambda(t)))^m}\Big) \rmd  t +  M_4 \vp(\jxi) \widetilde \chi\Big(\frac{\jxi}{N (w(\Lambda(t)))^m}\Big)\\
		&+ M_8(T-\kappa t_0) \jxi^{\frac{1}{s}}\bigg\}.
		\end{aligned}
	\end{equation}
	Here we observe, that
	\begin{align*}
		\int\limits_0^{t_0}& \varrho( t,\,\xi) \chixi \rmd t\\ &\leq C \int\limits_0^{t_0} \jxi^\frac{1}{m} \lambda(t) \big(w(\Lambda(t))\big)^{m-1} \chixi \rmd t + C\\ &\leq C W(\Lambda(t_\xi)) \big(w(\Lambda(t_\xi))\big)^{m-1}+C,
	\end{align*}
	due to the definition of $\varrho$, the definitions of the zones and the special choice of $w(\Lambda(t))$. Similarly, we obtain
	\begin{equation*}
		 \int\limits_0^{t_0} \lambda(t) \big(w(\Lambda( t))\big)^{m} \widetilde \chi\Big(\frac{\jxi}{N (w(\Lambda(t)))^m}\Big) \rmd  t \leq C W(\Lambda(t_\xi)) \big(w(\Lambda(t_\xi))\big)^{m-1}+C.
	\end{equation*}
	We now use assumption \ref{CP2:EstEta} to conclude that $\Phi$ satisfies the assumptions of Propositions~\ref{APP:PSEUDO:CALC:CON} and \ref{APP:PSEUDO:CALC:CON:EstR} and obtain that
	\begin{align*}
	\sigma(D_\Phi(t,\,x,\,D_x)) &= D(t,\,x,\,\xi) + \sum\limits_{0 < |\gamma| < N} D_x^\gamma D(t,\,x,\,\xi) \chi_\gamma(\xi) + r_N( D;\,t,\,x,\,\xi),\\
	\sigma(R_\Phi(t,\,x,\,D_x)) &= R(t,\,x,\,\xi) + \sum\limits_{0 < |\gamma| < N} D_x^\gamma R(t,\,x,\,\xi) \chi_\gamma(\xi) + r_N( R;\,t,\,x,\,\xi),
	\end{align*}
	where
	\begin{equation*}
	|\partial_\xi^\alpha \chi_\gamma(\xi)| \leq C_{\alpha,\,\gamma} \jxi^{-|\alpha|-|\gamma|}(\Phi(t,\,\xi))^{|\gamma|},
	\end{equation*}
	\begin{align*}
	\big|D_x^\beta \partial_\xi^\alpha r_N( D;\,t,\,x,\,\xi)\big| &\leq C_{\alpha,\,\beta,\,N} \jxi^{1-|\alpha|} \Big(\frac{\Phi(t,\xi)}{\jxi}\Big)^N,\text{ and }\\
	\big|D_x^\beta \partial_\xi^\alpha r_N(R;\,t,\,x,\,\xi)\big| &\leq C_{\alpha,\,\beta,\,N} \jxi^{1-|\alpha|} \Big(\frac{\Phi(t,\,\xi)}{\jxi}\Big)^N,
	\end{align*}
	for $t \in [0,\,T]$.
	Thus,
	\begin{align*}
	\sigma(D_\Phi(t,\,x,\,D_x)) &= D(t,\,x,\,\xi) + \sum\limits_{0 < |\gamma| \leq N} R_{\gamma}(D;\,t,\,x,\,\xi),\\
	\sigma(R_\Phi(t,\,x,\,D_x)) &= R(t,\,x,\,\xi) + \sum\limits_{0 < |\gamma| \leq N} R_{\gamma}(R;\,t,\,x,\,\xi),
	\end{align*}
	where
	\begin{align*}
	\big|D_x^\beta \partial_\xi^\alpha R_\gamma( D;\,t,\,x,\,\xi)\big| &\leq C_{\alpha,\,\beta,\,|\gamma|} \jxi^{1-|\alpha|} \Big(\frac{\Phi(t,\,\xi)}{\jxi}\Big)^{|\gamma|}\\&\leq C_{\alpha,\,\beta,\,|\gamma|} \jxi^{-|\alpha|} \Phi(t,\,\xi),\\
	\big|D_x^\beta \partial_\xi^\alpha R_\gamma(R;\,t,\,x,\,\xi)\big| &\leq C_{\alpha,\,\beta,\,|\gamma|} \jxi^{1-|\alpha|} \Big(\frac{\Phi(t,\,\xi)}{\jxi}\Big)^{|\gamma|}\\&\leq C_{\alpha,\,\beta,\,|\gamma|} \jxi^{-|\alpha|} \Phi(t,\,\xi).
	\end{align*}
	Most importantly, due to Remark~\ref{APP:PSEUDO:CALC:CON:RemarkToCon} all these relations are satisfied with constants independent of $t$ or $T$.
	We conclude that Cauchy problem \eqref{CP2:PROOF:TransformedSystem3} is equivalent to
	\begin{equation*}
	\begin{aligned}
	D_t W = D(t,\,x,\,D_x) W + R(t,\,x,\,D_x)W + R_1(t,\,x,\,D_x)W\\- \Big( \sum_{k = 1}^8 D_t \Phi_k(t,\,D_x)\Big) W,
	\end{aligned}
	\end{equation*}
	with initial conditions
	\begin{equation*}
	\begin{aligned}
	W(0,\,x) =  \jbl D_x \jbr^{\nu} e^{-\Phi(0,\,D_x)} M^\#(0,\,x,\,D_x) H(0,\,D_x)\\\times \big(u(0,\,x),\,\ldots,\,D_t^{m-1}u(0,\,x)\big)^T,
	\end{aligned}
	\end{equation*}
	where
	\begin{equation}\label{PROOF:EstR1}
	\big|D_x^\beta \partial_\xi^\alpha \sigma(R_1(t,\,x,\,D_x))\big| \leq C_{\alpha,\,\beta} \jxi^{-|\alpha|} \Phi(t,\,\xi),
	\end{equation}
	for $t \in [0,\,T]$ and $D(t,\,x,\,D_x)$ and $R(t,\,x,\,D_x)$ as before.

	\subsection{Well-posedness of an auxiliary Cauchy problem}
	
	In this section we consider the auxiliary Cauchy problem
	\begin{equation}\label{SH:PROOF:AUXCP:WEAK:CP}
	\begin{aligned}
	\partial_t W =	 \Big(\I  D(t,\,x,\,D_x) + \I R(t,\,x,\,D_x) + \I R_1(t,\,x,\,D_x)  - \sum_{k = 1}^8 \partial_t \Phi_k(t,\,D_x)\Big) W,
	\end{aligned}
	\end{equation}
	for $(t,\,x) \in [0,\,T^\ast]\times \R^n$,
	with initial conditions
	\begin{equation*}
	\begin{aligned}
	W(0,\,x) =  \jbl D_x \jbr^{\nu} e^{-\Phi(0,\,D_x)} M^\#(0,\,x,\,D_x) H(0,\,D_x)\\\times \big(u(0,\,x),\,\ldots,\,D_t^{m-1}u(0,\,x)\big)^T,
	\end{aligned}
	\end{equation*}
	
	Recalling that $\partial_t |W|^2 = 2 \Re\big[(\partial_t W,\,W)\big]$ we obtain
	\begin{equation*}
	\begin{aligned}
	\partial_t|W|^2= 2 \Re\Big[\Big(\Big(\I  D(t,\,x,\,D_x) + \I R(t,\,x,\,D_x) + \I R_1(t,\,x,\,D_x)\\ - \sum_{k = 1}^8 \partial_t \Phi_k(t,\,D_x)\Big)W,\,W\Big)\Big].
	\end{aligned}
	\end{equation*}
	We observe that
	\begin{equation*}
	\begin{aligned}[t]
	\Re\Big[\sigma\Big(\I  D(t,\,x,\,D_x) + \I R(t,\,x,\,D_x) + \I R_1(t,\,x,\,D_x)  - \sum_{k = 1}^8 \partial_t \Phi_k(t,\,D_x)\Big)\Big]\\=	\Re\Big[\sigma\Big(\I R(t,\,x,\,D_x) + \I R_1(t,\,x,\,D_x)  - \sum_{k = 1}^8 \partial_t \Phi_k(t,\,D_x)\Big)\Big],
	\end{aligned}
	\end{equation*}
	since  $\Re[\I D(t,\,x,\,\xi)] = 0$ (by assumption~\ref{CP2:Hyperbolic}).
	In view of \eqref{PROOF:EstR}, \eqref{PROOF:EstR1} and due the construction of $\Phi$ we are able to choose the constants $M_1,\,\ldots,\,M_8$ and $\kappa$  sufficiently large, such that
	\begin{equation*}
		\I R(t,\,x,\,D_x) + \I R_1(t,\,x,\,D_x)  - \sum_{k = 1}^8 \partial_t \Phi_k(t,\,D_x),
	\end{equation*}
	is a positive operator. Thus, we are able to apply sharp G{\aa}rding's inequality to obtain
	\begin{align*}
	\partial_t |W|^2 &=
	\begin{aligned}[t]
	2 \Re\Big[\Big(\Big(\I  D(t,\,x,\,D_x) + \I R(t,\,x,\,D_x) + \I R_1(t,\,x,\,D_x)  \\- \sum_{k = 1}^8 \partial_t \Phi_k(t,\,D_x)\Big)W,\,W\Big)\Big]
	\end{aligned}\\	&\leq C | W |^2,
	\end{align*}
	which yields $
	\partial_t\|W\|_{L^2}^2 \leq C \|W\|_{L^2}^2$. Application of Gronwall's lemma leads to \begin{equation*}
	\|W(t,\,\cdot)\|_{L^2}^2 \leq C \|W(0,\,\cdot)\|_{L^2}^2,
	\end{equation*}
	for $t\in[0,\,T^\ast]$, where
	\begin{equation}\label{PROOF:ConSpace}
	\|W(0,\,\cdot)\|_{L^2}^2 =  \|\jbl D_x \jbr^{\nu} e^{-\Phi(0,\,D_x)} M^\#(0,\,x,\,D_x) U(0,\,x)\|_{L^2}^2 \leq C < \infty,
	\end{equation}
	since $M^\#$ is an operator of order zero,
	\begin{equation*}
	\sigma(-\Phi(0,\,D_x)) \sim W(\Lambda(t_\xi)) (w(\Lambda(t_\xi)))^{m-1} + \vp(\jxi),
	\end{equation*} and due to assumptions~\ref{CP2:Data} and \eqref{CP2:DefEta}.
	
	At the moment, we only local (in time) have well-posedness in spaces related to the above estimate \eqref{PROOF:ConSpace}. This concludes the proof of the local well-posedness result.
	
	For our global well-posedness result, we note that if our initial data belong to a space with more regularity than required to satisfy \eqref{PROOF:ConSpace} (which they do), we are able to apply a continuation argument as in \cite{Cicognani.2017} to get global in time well-posedness in the spaces of the initial data, with an in general infinite loss of derivatives. This concludes the proof of Theorem~\ref{CP2:Theorem}.
	
	\section{Concluding remarks} \label{Secconcluding}

	In this paper we have studied the well-posedness of weakly hyperbolic Cauchy problems with coefficients low-regular in time and smooth in space. We proposed the generalized Levi condition
	\begin{equation*}
	\big|D_x^\beta b_{m-j,\,\gamma}(t,\,x)\big| \leq C K_{|\beta|} \lambda(t)^{m-j} \big(w(\Lambda(t))\big)^{m(m-j-|\gamma|)},
	\end{equation*}
	to investigate the interplay between effects arising due to the low regularity of the coefficients and the multiplicity of the characteristic roots. We found that the influences of these effects on the weight function of the solution space are independent of each other in the sense that both effects generate a weight and just the dominate weight determines the solution space.
	
	At a first glance, the special choice of
	\begin{equation*}
	(w(\Lambda(t)))^m =  \Lambda(t)^{-\frac{s}{s-1}} \big(\log^{[\widetilde m]}(\Lambda(t)^{-1})\big)^{\widetilde \beta},
	\end{equation*}
	with $s > \frac{m-1}{m}$ might seem to be a limitation to Gevrey type Levi conditions only. However, it is possible to work with more general $w(\Lambda(t))$ as long as \ref{CP2:WAtZero} and Proposition~\ref{CP2:Remark:EstLambdaW} are satisfied.
	\begin{Example}
	Let us consider a Cauchy problem with coefficients having the modulus of continuity
	\begin{equation*}
	\mu(s) = s \Big(\log\Big(\frac{1}{s}\Big)+1\Big)^2.
	\end{equation*}
	This modulus of continuity generates the weight $\vp(\jxi) = (\log(\jxi))^2$. 
	We choose
	\begin{equation*}
		(w(\Lambda(t)))^m =  \Lambda(t)^{-1} \big(\log(\Lambda(t)^{-1})\big)^{2}.
	\end{equation*}
	In this way we get the Levi condition
	\begin{equation*}
	\big|D_x^\beta b_{m-j,\,\gamma}(t,\,x)\big| \leq C K_{|\beta|} \frac{\lambda(t)^{m-j}}{ \Lambda(t)^{m-j-|\gamma|}}  \big(\log(\Lambda(t)^{-1})\big)^{2(m-j-|\gamma|)},
	\end{equation*}
	which is between the above one and the $C^\infty$ type Levi condition
	\begin{equation*}
	\big|D_x^\beta b_{m-j,\,\gamma}(t,\,x)\big| \leq C K_{|\beta|} \frac{\lambda(t)^{m-j}}{ \Lambda(t)^{m-j-|\gamma|}}  \big(\log(\Lambda(t)^{-1})\big)^{m-j-|\gamma|}.
	\end{equation*}
	Although this choice is not covered by the main theorem of this paper, we are still able to apply the theorem for this particular choice of $w(\Lambda(t))$, since it satisfies \ref{CP2:WAtZero} and Proposition~\ref{CP2:Remark:EstLambdaW}.
	This Levi condition generates the weight
	\begin{equation*}
	W(\Lambda(t_\xi))(w(\Lambda(t_\xi)))^{m-1} \sim \log(\jxi)^2,
	\end{equation*}
	which yields well-posedness in spaces
	\begin{equation*}
	H^\nu_{\eta,\,\delta}(\R^n) = \Big\{f \in \Sw^\prime(\R^n)\,|\, \jbl D_x\jbr^\nu e^{\delta\eta(\jbl D_x \jbr)} f(x) \in L^2(\R^n)\Big\},
	\end{equation*}
	with $o(\eta(\jxi)) =  \log(\jxi)^2$.
	\end{Example}

\end{document}